\numberwithin{figure}{section}
\numberwithin{equation}{section}
\title{Major Index on Catalan combinatorics}
\author[K.~Shigechi]{Keiichi~Shigechi}
\email{k1.shigechi AT gmail.com}
\date{\today}
\newcommand\tikzpic[2]{
\raisebox{#1\totalheight}{
\begin{tikzpicture}
#2
\end{tikzpicture}
}}
\newtheorem{theorem}[figure]{Theorem}
\newtheorem{example}[figure]{Example}
\newtheorem{lemma}[figure]{Lemma}
\newtheorem{defn}[figure]{Definition}
\newtheorem{prop}[figure]{Proposition}
\newtheorem{cor}[figure]{Corollary}
\newtheorem{remark}[figure]{Remark}
\begin{document}

\begin{abstract}
We study the two statistics, the inversion number and the major index, on 
Catalan combinatorial objects such as  $r$-Dyck paths, $r$-Stirling permutations,
non-crossing partitions, Dyck tilings, and symmetric Dyck paths.
We show that they are equidistributed with the inversion number or the major 
index on Dyck paths.
\end{abstract}

\maketitle

\section{Introduction}
The Catalan numbers $\genfrac{}{}{}{}{1}{n+1}\genfrac{(}{)}{0pt}{}{2n}{n}=1,2,5,14,42,\ldots$ 
are one of the most well-known combinatorial numbers.
They count the numbers of combinatorial objects such as triangulation of a polygon, 
Dyck paths, binary trees, rooted planar trees, non-crossing partitions and so on (see e.g. 
\cite{Sta99,Sta15}).
Another remarkable combinatorial object is a permutation. 
A subset of permutations has been studied to study the set of Catalan objects. 
For example, the permutations which avoid a $w$-pattern $w\in\{132,231,312,321\}$ are known 
to be bijective to Dyck paths (see \cite{BanKil01,Knu73,Kra01,Stu09} and references therein). 
In this paper, we focus on the set of combinatorial objects which are bijective to 
the Catalan objects. They are $r$-Dyck paths with $r\ge1$, $r$-Stirling permutations, 
non-crossing partitions, Dyck tilings corresponding to $231$-avoiding permutations, and symmetric Dyck paths.

A statistic on combinatorial objects is an important tool to extract a combinatorial structure
from the objects.
The inversion number of a permutation is one of the most well-known statistic 
on a permutation, which goes back to at least \cite{DavBar62,Net01}.
Two different statistics are said to be equidistributed if they have the same 
generating function on the comibnatorial objects. 
A static is called Mahonian if it is equidistributed with the inversion statistic
on the permutations.
One of Mahonian statistics is the major index which is introduced by 
P.~A.~MacMahon in \cite{MacM16,MacM60}.
In this paper, we consider the inversion number and the major index on the combinatorial objects mentioned above.

The number of Dyck paths of size $n$ is given by the $n$-th Catalan number.
Since the number of permutations in $[n]:=\{1,2,\ldots,n\}$ which avoid the pattern $231$ is also given 
by the Catalan number, we have a bijection between Dyck paths and $231$-avoiding 
permutations \cite{Stu09}.
We make use of this property of the correspondence when we consider combinatorial objects 
such as $r$-Stirling permutations, non-crossing partitions, and Dyck tilings.

An $r$-Dyck path is a generalization of a Dyck path, and the number of $r$-Dyck paths 
is given by the Fuss--Catalan numbers. 
An $r$-Stirling permutation is a generalization of a permutation introduced by 
\cite{GesSta78,Par94}. As in the case of Dyck paths and permutations, we have 
a bijection between $r$-Dyck paths and $231$-avoiding $r$-Stirling permutations.
This bijection allows us to study the inversion number and major index on $r$-Stirling 
permutations instead of $r$-Dyck paths.
We introduce a new major index on $r$-Stirling permutations, and show that 
it is Mahonian (Theorem \ref{thrm:INVMAJ}). 

A non-crossing partition of $[n]$ is a combinatorial object whose total number 
is given by the Catalan number.
G. Kreweras initiated a systematical study of non-crossing partitions in \cite{Kre72}.
We define two statistics on a non-crossing partition which give the major index 
and the inversion number of a Dyck path (Theorem \ref{thrm:NCtoCat} and Proposition \ref{prop:NCqCat}
respectively).
By a refinement of the major index on a non-crossing partition, we obtain  
a generating function $\mathcal{A}(q,t)$ with two variables $q$ and $t$ which is reduced to 
the generating function of the major index at $t=q$ (Corollary \ref{cor:Aqq}). 
This polynomial $\mathcal{A}(q,t)$ is essentially the same as the polynomial $\mathcal{A}(q,t^{-1})$
studied in \cite{Stu09}.
This generalization of the major index is achieved by constructing a bijection between 
non-crossing permutations and $312$-avoiding permutations (equivalently $231$-avoiding 
permutations) (Theorem \ref{thrm:NCqt}).

A Dyck tiling is a tiling of the region between two Dyck paths by tiles which are called 
Dyck tiles. A Dyck tiling first appeared in the study of the Kazhdan--Lusztig polynomials for 
Grassmannian permutations in \cite{SZJ12}, and independently in the study of the double-dimer
models in \cite{KW11}.
In this paper, we consider a variant of Dyck tilings which is introduced in \cite{S19}.
Since we have a one-to-one correspondence between Dyck tilings and permutations, to relate 
them with Dyck paths, we consider the $231$-avoiding permutations.
By giving a weight on a Dyck tile, one can define the weight of a Dyck tiling which is 
equidistributed with the major index on the corresponding Dyck path (Theorem \ref{thrm:majD}).
The bijection between $231$-avoiding permutations and Dyck paths introduced in \cite{Stu09}
plays a central role to show that the weight of a Dyck tiling is equal to the major 
index of the corresponding Dyck path.
Similarly, by modifying the weight of a Dyck tile, we obtain a Mahonian statistic on 
Dyck tilings (Proposition \ref{prop:DTqCat}).

Finally, we consider the set of symmetric Dyck paths which are bijective to Dyck paths 
of size $n$.
We make use of the bijection between the two sets introduced in \cite[Section 7.3]{S23}.
We regard a path as a word of two alphabets $0$ and $1$, and this bijection does 
not preserve the length of a word in general. 
We define two statistics on a symmetric Dyck path which are equidistributed with the major 
index and the inversion number of a Dyck path.
A Dyck path of size $n$ has $n$ pairs of $0$ and $1$ in its word representation.
On the other hand, a symmetric Dyck path of size $n$ has $n-k$ pairs of $0$ and $1$, and 
$k$ unpaired $0$'s with $0\le k\le n$.
The existence of $k$ unpaired $0$'s allows us to refine the major index (more precisely 
the $q$-Narayana numbers) on symmetric Dyck paths (Theorem \ref{thrm:N3nrk}). 
Similarly, we newly define the weight on symmetric Dyck paths which is Mahonian 
(Proposition \ref{prop:symDPqCat}).

This paper is organized as follows.
In Section \ref{sec:CO}, we introduce the notion of $r$-Dyck paths, $r$-Stirling permutations,
and their pattern avoidance. 
In Section \ref{sec:Stat}, we introduce and study the inversion number and the major index on permutations, $r$-Dyck paths,
and $r$-Stirling permutations.
The inversion number and major index on non-crossing partitions are defined and studied in Section \ref{sec:NC}.
In Section \ref{sec:DT}, we study the Dyck tilings in the view of the inversion number and the major index on them.
In Section \ref{sec:SymDP}, the major index and inversion number on symmetric Dyck paths are shown 
to be equidistributed with the two statistics on Dyck paths respectively.

\paragraph{Notation.}
We will use the standard $q$-notation:
\begin{align*}
&(x;q)_{n}=(x)_{n}=(1-x)(1-qx)\cdots(1-q^{n-1}x)
=\sum_{k=0}^{n}q^{\genfrac{(}{)}{0pt}{}{k}{2}}\genfrac{[}{]}{0pt}{}{n}{k}(-x)^{k}, \\
&[n]=\sum_{i=0}^{n-1}q^{i},\qquad \genfrac{[}{]}{0pt}{}{n}{k}=\genfrac{}{}{}{}{(q)_n}{(q)_k(q)_{n-k}}.
\end{align*}
The Kronecker delta function $\delta(P)$ is defined by 
\begin{align*}
\delta(P):=\begin{cases}
1, & \text{ if $P$ is true}, \\
0, & \text{ otherwise}. 
\end{cases}
\end{align*}

\section{Combinatorial objects}
\label{sec:CO}
\subsection{Generalized Dyck paths}
A {\it generalized Dyck path} of length $n$ or {\it $r$-Dyck path} with $r\ge1$ is 
an up-right lattice path from $(0,0)$ to $(rn,n)$ such that 
the path never goes down the line $y=x/r$.
Since an $r$-Dyck path $p$ consists of up steps $(0,1)$ and right steps $(1,0)$, 
we express $p$ in terms of $0$ and $1$ where $0$ (resp. $1$) stands for an up (resp. right)
step.
For example, we have five Dyck paths of length three:
\begin{align*}
000111 \qquad 001011 \qquad 001101 \qquad 010011 \qquad 010101
\end{align*}
   
A Dyck path $d$ is said to be {\it prime} if $d$ touches the line $y=x/r$ exactly twice.
For example, we have two prime Dyck paths of size three: $000111$ and $001011$.

We denote by $\mathcal{P}^{r}_{n}$ be the set of $r$-Dyck paths of length $n$.
The set $\mathcal{P}^{r}_{n}$ has two distinguished paths: 
the lowest path $(01^{r})^{n}$ and the top path $0^{n}1^{rn}$.

The number $|\mathcal{P}^{r}_{n}|$ of $r$-Dyck paths of length $n$ is given 
by the Fuss--Catalan number
\begin{align*}
|\mathcal{P}^{r}_{n}|=\genfrac{}{}{}{}{1}{nr+1}\genfrac{(}{)}{0pt}{}{(r+1)n}{n}.
\end{align*}
When $r=1$, the number of Dyck paths of size $n$ is given by the well-known 
Catalan numbers.

\subsection{\texorpdfstring{$r$}{r}-Stirling permutations}
When $r=1$, the set $\mathcal{P}^{1}_{n}$ of Dyck paths is bijective to a subset of 
the permutations of $n$ letters. A Dyck path may be characterized by 
the pattern avoidance of the permutations. 
For example, a region between the top Dyck path and a Dyck path gives a Young diagram.
Then, we have a one-to-one correspondence between Dyck paths and 
$132$-avoiding permutations.
By considering a correspondence between Dyck paths and other combinatorial 
objects whose total number is given by the Catalan number, one has 
a correspondence between Dyck paths and $w$-avoiding permutations with 
$w\in\{132,231,312,321\}$ (see e.g. \cite{BanKil01,Knu73,Kra01}).
We denote the set of $w$-avoiding permutations by $\mathfrak{S}(w)$.

For $r\ge2$, $r$-Stirling permutations play a role instead of permutations 
for $r=1$.
The $r$-Stirling permutations were introduced by I.~Gessel and R.~P.~Stanley \cite{GesSta78} for $r=2$,
and generalized to the case $r\ge3$ by S.~Park \cite{Par94}.

An {\it $r$-Stirling permutation of size $n$} is a permutation of the multiset 
$\{1^{r},2^r,\ldots,n^r\}$ such that 
if an integer $i$ appears between two $j$'s then $i<j$.
Let $\mathfrak{S}^{(r)}_{n}$ denote the set of $r$-Stirling permutations 
of size $n$.
When $r=1$, we have the set $\mathfrak{S}_{n}:=\mathfrak{S}^{(1)}_{n}$ of the permutations
of $n$ letters.
The total number of the set $\mathfrak{S}^{(r)}_{n}$ is given by
\begin{align*}
|\mathfrak{S}^{(r)}_{n}|=\prod_{s=1}^{n-1}(sr+1).
\end{align*} 

For example, we have fifteen $2$-Stirling permutations of size three:
\begin{align*}
332211 \qquad 332112 \qquad 322311 \qquad 331122 \qquad 322113 \\
223311 \qquad 311322 \qquad 321123 \qquad 223113 \qquad 113322 \\
311223 \qquad 221133 \qquad 113223 \qquad 211233 \qquad 112233
\end{align*} 

\subsection{Bijection between \texorpdfstring{$r$}{r}-Dyck paths and 
\texorpdfstring{$231$}{231}-avoiding \texorpdfstring{$r$}{r}-Stirling permutations}
\label{sec:bij231}
An $r$-Stirling permutation $\pi:=\pi_1\ldots\pi_{nr}\in\mathfrak{S}^{(r)}_{n}$ 
is said to be $231$-avoiding if there is no integers $i<j<k$ such that 
$\pi_{k}<\pi_{i}<\pi_{j}$.
For $2$-Stirling permutations of size three, we have three $231$-containing $2$-Stirling 
permutations: $322311$, $223311$, and $223113$.

\begin{defn}
We denote by $\mathfrak{S}^{(r)}_{n}(231)$ the set of $231$-avoiding $r$-Stirling permutations.
\end{defn}

In what follows, we give a bijection between $r$-Dyck paths of size $n$ and 
$231$-avoiding $r$-Stirling permutations of size $n$.

Let $\pi:=\pi_1\ldots\pi_{rn}$ be a $231$-avoiding $r$-Stirling permutation.
We define a sequence $\alpha(\pi):=(\alpha_1,\ldots,\alpha_{n})$ 
of non-negative integers of length $n$ from $\pi$ 
by 
\begin{align*}
\alpha_{i}:=\#\{j>i: \text{ $j$ is left to the left-most $i$ in $\pi$} \}.
\end{align*}
By definition, we have $\alpha_{n}=0$.
For example, we have $\alpha(\pi)=(2,1,1,0)$ if $\pi=42112334\in\mathfrak{S}^{(2)}_{4}$.

Let $p_{0}$ be the lowest $r$-Dyck path of length $n$, {\it i.e.}, $p_{0}=(01^{r})^{n}$.
We construct an $r$-Dyck path $p(\pi)$ for $\pi$ from $\alpha(\pi)$.
We add $\alpha_{i}$ unit boxes in the $i$-th row of $p_{0}$ from top to bottom.
Then, we define $p(\pi)$ as the top path.
For example, $p(42112334)$ is the $2$-Dyck path $010110101111$.
Graphically, we have 
\begin{align*}
\tikzpic{-0.5}{[scale=0.5]
\draw[thick](0,0)--(0,1)--(2,1)--(2,2)--(4,2)--(4,3)--(6,3)--(6,4)--(8,4);
\draw(1,1)--(1,2)--(2,2)(3,2)--(3,3)--(4,3)--(4,4)--(6,4)(5,4)--(5,3);
}
\end{align*}
Note that we have two, one and one boxes in the first, second and third rows.

We denote the map $\kappa:\mathfrak{S}^{(r)}_{n}(231)\rightarrow\mathcal{P}^{r}_{n}$ defined 
as above.

\begin{prop}
The map $\kappa$ is well-defined, that is, $\kappa(\pi)\in\mathcal{P}^{r}_{n}$ 
if $\pi\in\mathfrak{S}^{(r)}_{n}(231)$.
Further, $\kappa$ is a bijection between the two sets.
\end{prop}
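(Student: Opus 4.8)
The plan is to establish the two required properties—well-definedness and bijectivity—separately, exploiting the combinatorial control that the $231$-avoidance condition gives us over the sequence $\alpha(\pi)=(\alpha_1,\ldots,\alpha_n)$. First I would verify that $\kappa$ lands in $\mathcal{P}^r_n$. The construction adds $\alpha_i$ boxes to the $i$-th row of the lowest path $p_0=(01^r)^n$; since $p_0$ is the lowest $r$-Dyck path, the resulting top path $p(\pi)$ stays weakly above the line $y=x/r$ automatically, provided the row lengths are weakly decreasing (or more precisely, provided the added boxes form a valid Young-diagram shape nested between $p_0$ and the top path $0^n 1^{rn}$). So the crux of well-definedness is to show that the sequence $(\alpha_1,\ldots,\alpha_n)$ satisfies the staircase bound $\alpha_i \le r(n-i)$ forced by the top path, and that it is compatible with a genuine lattice-path shape. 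I would read off the bound $\alpha_i\le r(n-i)$ directly: by definition $\alpha_i$ counts letters $j>i$ lying to the left of the leftmost occurrence of $i$, and there are exactly $r(n-i)$ letters in the multiset exceeding $i$ in total, giving the inequality immediately.

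The heart of the argument is injectivity, and this is where I expect the $231$-avoidance to do the real work. The plan is to show that the map $\pi\mapsto\alpha(\pi)$ is injective on $\mathfrak{S}^{(r)}_n(231)$, after which injectivity of $\kappa$ follows because $\alpha\mapsto p(\pi)$ is manifestly reversible (the row-lengths of the skew region recover $\alpha$). To recover $\pi$ from $\alpha$, I would argue by induction on $n$, reconstructing the positions of the leftmost copies of each letter. The key structural fact is that a $231$-avoiding $r$-Stirling permutation is rigidly determined by the ``arrival data'' $\alpha_i$: once we know how many larger letters precede the first $i$, the $231$-avoidance condition forces the relative order of all the blocks, because any freedom in placing a smaller value after a descent would create a forbidden $231$ pattern. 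Concretely, I would show that in a $231$-avoiding permutation the leftmost occurrences of the letters, read in increasing order of value, together with the counts $\alpha_i$, reconstruct the full word uniquely; the Stirling condition (each $i$ appearing between two $j$'s forces $i<j$) pins down where the remaining $r-1$ copies of each letter must go.

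For surjectivity, the cleanest route is a counting argument: by the Proposition's hypothesis both sets are finite, and their cardinalities are known. The number of $r$-Dyck paths is the Fuss--Catalan number $\tfrac{1}{nr+1}\binom{(r+1)n}{n}$, and I would invoke the fact—or prove it in parallel—that the $231$-avoiding $r$-Stirling permutations are enumerated by the same Fuss--Catalan number. Given this, an injective map between two finite sets of equal cardinality is automatically a bijection, so surjectivity comes for free once injectivity is established. Alternatively, if the enumeration of $\mathfrak{S}^{(r)}_n(231)$ is not taken as known, I would construct the inverse map explicitly: given an $r$-Dyck path $p$, read off its row lengths relative to $p_0$ to obtain a candidate sequence $\alpha$, then build the permutation greedily from smallest letter to largest, placing each block so as to realize the prescribed $\alpha_i$ while respecting both the Stirling and $231$-avoidance constraints, and verify that this inverse is two-sided.

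The main obstacle I anticipate is the injectivity step—specifically, proving rigorously that $231$-avoidance plus the $r$-Stirling condition forces a \emph{unique} word with a given $\alpha$. The subtlety is that $\alpha_i$ only records information about the \emph{leftmost} copy of $i$, whereas each letter occurs $r$ times; one must argue that the placement of the remaining $r-1$ copies is entirely determined, which is exactly where the interplay between the two constraints must be used carefully rather than waved at.
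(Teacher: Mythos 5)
Your overall architecture --- reduce everything to the sequence $\alpha(\pi)$ and characterize which sequences arise --- is the same as the paper's, but the well-definedness step has a genuine gap: you prove the wrong inequality. The bound $\alpha_i\le r(n-i)$ that you derive holds for \emph{every} $r$-Stirling permutation (there are only $r(n-i)$ occurrences of letters exceeding $i$ in the whole word), so it cannot be the condition that singles out the $231$-avoiding ones, and it is not sufficient for the construction to yield a lattice path. What is actually needed is the inequality $\alpha_i\le\alpha_{i+1}+r$ (the paper's $\alpha_i<\alpha_{i+1}+r+1$): the up-step of row $i+1$ sits at abscissa $r(n-i-1)-\alpha_{i+1}$ and that of row $i$ at $r(n-i)-\alpha_i$, and the former must not lie to the right of the latter. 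This, together with $\alpha_n=0$, is exactly the characterization of $r$-Dyck paths, and it is precisely where $231$-avoidance enters: an occurrence of a letter $k>i+1$ lying left of the leftmost $i$ must also lie left of the leftmost $i+1$, since otherwise the subword $(i+1)\,k\,i$ would form a $231$ pattern; hence $\alpha_i$ can exceed $\alpha_{i+1}$ by at most the $r$ copies of $i+1$. Your proposal never establishes this, and the condition you do verify would ``prove'' well-definedness equally well for, say, $\pi=231$ with $r=1$, whose sequence $\alpha=(2,0,0)$ satisfies your bound but corresponds to no Dyck path.

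On the bijectivity half you are, if anything, more careful than the paper (which dismisses the inverse as ``easily obtained''): you correctly identify that one must show a $231$-avoiding $r$-Stirling permutation is uniquely reconstructible from $\alpha$, including the positions of the $r-1$ non-leftmost copies of each letter. But you explicitly leave this step, which you yourself call the main obstacle, unresolved, and your fallback counting argument for surjectivity presupposes the Fuss--Catalan enumeration of $\mathfrak{S}^{(r)}_{n}(231)$, which is essentially the statement being proved. As it stands the proposal is a plan in which the two load-bearing steps --- the inequality $\alpha_i\le\alpha_{i+1}+r$ and the uniqueness of the reconstruction --- are both missing.
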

\begin{proof}
We first show that $\kappa$ is well-defined.
Let $\pi\in\mathfrak{S}^{(r)}_{n}(231)$ be an $r$-Stirling permutation. Then, by the definition 
of $\alpha(\pi)$, we always have $\alpha_{i}<\alpha_{i+1}+r+1$, since $\pi$ avoids a pattern 
$231$ and $\pi\in\mathfrak{S}^{(r)}_{n}$.
On the other hand, recall that $p_{0}$ is the lowest $r$-Dyck path in $\mathcal{P}^{r}_{n}$.
This implies that if we put $\beta_{i}$ unit boxes in the $i$-th row above the lowest path $p_{0}$, 
the numbers $\beta_{i}$ satisfy the relation $\beta_{i}<\beta_{i+1}+r+1$ to have an $r$-Dyck path.
Then, by construction of $\kappa$, we have $\alpha_{i}=\beta_{i}$, and $\kappa(\pi)$ is 
an $r$-Dyck path. Therefore, the map $\kappa$ is well-defined.

By construction of $\kappa$, the inverse map $\kappa^{-1}$ is easily obtained.
Let $p$ be an $r$-Dyck path. Then, we count the numbers $\alpha_{i}$ of unit boxes in the $i$-th row 
which are below $p$ and above the lowest path $p_{0}$. Once we have $\alpha$, it is obvious that we have 
a $231$-avoiding $r$-Stirling permutation.
These imply that $\kappa^{-1}$ is indeed an inverse of $\kappa$.
Thus, it is a bijection between the two sets $\mathcal{P}^{r}_{n}$ and $\mathfrak{S}^{(r)}_{n}(231)$.
\end{proof}

\section{Statistics}
\label{sec:Stat}
Let $f:=(x_1,\ldots,x_{n})\in\mathbb{R}^{n}$ be an arbitrary sequence of 
real numbers of length $n$.
The {\it inversion number} $\mathrm{inv}(f)$ of $f$ is defined 
as the number of pairs $(j,k)$ such that 
$1\le j<k\le n$ and $x_{j}>x_{k}$.
The {\it major index} $\mathrm{maj}(f)$ of $f$ is defined as 
the sum of all integers $j$ such that $1\le j\le n-1$ and $x_{j}>x_{j+1}$.
Given a word $f$, we denote the set of descents by $\mathrm{Des}(f)$, {\it i.e.},
$\mathrm{Des}(f):=\{i : f_{i}>f_{i+1}\}$. 

\subsection{Dyck paths and permutations}
Given a permutation $\omega$, the inversion number of $\omega$ 
is defined to be $\mathrm{inv}(\omega)$.
Similarly, the major index of $\omega$ is given by $\mathrm{maj}(\omega)$.

\begin{prop}
We have 
\begin{align*}
\sum_{\omega\in\mathfrak{S}_{n}}q^{\mathrm{inv}(\omega)}=\sum_{\omega\in\mathfrak{S}_{n}}q^{\mathrm{maj}(\omega)}=[n]!.
\end{align*}
\end{prop}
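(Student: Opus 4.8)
The plan is to prove both equalities by induction on $n$, in each case by inserting the largest letter $n$ into a permutation of $\mathfrak{S}_{n-1}$ and tracking how the relevant statistic changes. The base case $n=1$ is immediate, since the unique permutation has $\mathrm{inv}=\mathrm{maj}=0$ and $[1]!=1$. For the inductive step, every $\omega\in\mathfrak{S}_n$ is obtained uniquely from some $\sigma\in\mathfrak{S}_{n-1}$ by inserting $n$ into one of the $n$ available slots (before $\sigma_1$, between two consecutive letters, or after $\sigma_{n-1}$), and I would compute the effect of each insertion.

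For the inversion number this is straightforward: since $n$ exceeds every other letter, inserting it so that exactly $k$ letters lie to its right creates precisely $k$ new inversions and leaves all existing inversions intact. As the slot ranges over all $n$ choices, $k$ runs through $0,1,\dots,n-1$, so
\begin{equation*}
\sum_{\omega\in\mathfrak{S}_n}q^{\mathrm{inv}(\omega)}
=\Bigl(\sum_{k=0}^{n-1}q^{k}\Bigr)\sum_{\sigma\in\mathfrak{S}_{n-1}}q^{\mathrm{inv}(\sigma)}
=[n]\,[n-1]!=[n]!,
\end{equation*}
the last equality being the inductive hypothesis.

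For the major index the same factorization should hold, but the change in $\mathrm{maj}$ under insertion is subtler, and establishing it is where I expect the main obstacle to lie. Concretely I would prove the insertion lemma: as the slot ranges over the $n$ possibilities, the differences $\mathrm{maj}(\omega)-\mathrm{maj}(\sigma)$ run through $0,1,\dots,n-1$, each exactly once. Granting this, the identical computation gives $\sum_{\omega\in\mathfrak{S}_n}q^{\mathrm{maj}(\omega)}=[n]\,[n-1]!=[n]!$, closing the induction.

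To prove the insertion lemma I would write $\omega$ as the word obtained from $\sigma$ by placing $n$ immediately after position $i$ (with $0\le i\le n-1$) and compare descent sets. Placing $n$ never creates a descent to its left, since everything is smaller than $n$; it always creates a descent immediately to its right unless $n$ is placed last; and every descent of $\sigma$ lying strictly to the right of the insertion point shifts up by one. Summing these contributions yields an explicit formula for $\Delta(i):=\mathrm{maj}(\omega)-\mathrm{maj}(\sigma)$, after which I would classify the slots into three families: the terminal slot $i=n-1$, for which $\Delta=0$; the slots $i$ sitting at a descent of $\sigma$, which I expect to realize the values $\{1,\dots,s\}$ where $s:=\#\mathrm{Des}(\sigma)$; and the remaining ``ascent-or-boundary'' slots, which I expect to realize $\{s+1,\dots,n-1\}$. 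Verifying that each family is hit bijectively — essentially rewriting $\Delta(i)$ in terms of the number of descents to the right of $i$ and reindexing each slot by its rank among the descents (resp. ascents) — is the technical heart of the argument. As an alternative that sidesteps this descent bookkeeping entirely, the equidistribution $\sum q^{\mathrm{maj}}=\sum q^{\mathrm{inv}}$ also follows from Foata's bijection $\phi\colon\mathfrak{S}_n\to\mathfrak{S}_n$ satisfying $\mathrm{maj}=\mathrm{inv}\circ\phi$, after which one only needs the inversion computation above.
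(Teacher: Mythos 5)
Your proposal is correct and follows essentially the same route as the paper: an induction in which one extreme letter is inserted into a permutation of $\mathfrak{S}_{n-1}$ and the statistic's increments are shown to run over $\{0,1,\dots,n-1\}$, yielding the factor $[n]$ (the paper removes the smallest letter $1$ via the decomposition $\omega=\omega_1 1\omega_2$, while you insert the largest letter $n$; these are equivalent under complementation). If anything, your sketch of the major-index insertion lemma is more explicit than the paper's, which simply asserts the maj case can be handled ``by a similar way.''
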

\begin{proof}[Sketch of the proof]
We prove the statement by induction on $n$.
We decompose the permutation $\omega$ as $\omega=\omega_11\omega_2$.
Then, we have 
$\mathrm{inv}(\omega)=\mathrm{inv}(\omega_1\omega_2)+|w_1|$. 
This implies 
\begin{align*}
\sum_{\omega\in\mathfrak{S}_{n}}q^{\mathrm{inv}(\omega)}
=\sum_{j=0}^{n-1}\sum_{\omega'\in\mathfrak{S}_{n-1}}q^{\mathrm{inv}(\omega')+j}
=[n]\cdot [n-1]!=[n]!.
\end{align*}
One can prove the statement for the major index by a similar way.
\end{proof}

Following \cite{Car72,CarRio64,FurHof85}, we define $q$-Catalan numbers $C_{n}=C_{n}(q)$ by
\begin{align}
\label{eq:defCn}
z=\sum_{n=1}^{\infty}C_{n-1}z^{n}(1-z)(1-qz)\cdots(1-q^{n-1}z).
\end{align}
The first few values are 
\begin{align*}
&C_0=C_1=1, \qquad C_{2}=1+q, \qquad C_{3}=1+q+2q^2+q^3, \\
&C_{4}=1+q+2q^2+3q^3+3q^4+3q^5+q^6.
\end{align*}
The $q$-Catalan number $C_{n}$ satisfies the recurrence relation (see e.g. \cite{FurHof85})
\begin{align}
\label{eq:recCn}
C_{n+1}=\sum_{k=0}^{n}C_{k}C_{n-k}q^{(k+1)(n-k)},
\end{align}
with $C_0=1$. Then, the exponent of $q$ in $C_{n}$ counts the area between the Dyck paths 
and the top path $0^{n}1^{n}$, {\it i.e.}, 
\begin{align}
\label{eq:AreaCn}
\sum_{d\in\mathcal{P}^1_n}q^{\mathrm{Area}(d)}=C_{n},
\end{align} 
where $\mathrm{Area}(d)$ is the number of unit boxes above $d$ and below $0^n1^n$.

Recall that $\mathfrak{S}_{n}(132)$ is the set of $132$-avoiding 
permutations of size $n$. 
\begin{prop}
We have 
\begin{align}
\label{eq:132C}
\sum_{\omega\in\mathfrak{S}_{n}(132)}q^{\mathrm{inv}(\omega)}
=C_{n}(q).
\end{align}
\end{prop}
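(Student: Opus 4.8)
The plan is to prove \eqref{eq:132C} by induction on $n$, showing that the generating function $A_n(q):=\sum_{\omega\in\mathfrak{S}_n(132)}q^{\mathrm{inv}(\omega)}$ satisfies the same recurrence \eqref{eq:recCn} as $C_n(q)$, with the same initial data $A_0=A_1=1$. The engine of the argument is a structural decomposition of a $132$-avoiding permutation according to the position of its largest entry.

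First I would establish the following decomposition. Suppose $\omega\in\mathfrak{S}_n(132)$ has its largest entry $n$ in position $k+1$. Then every entry to the left of $n$ is larger than every entry to its right: indeed, if there were a left entry $\omega_a$ and a right entry $\omega_c$ with $\omega_a<\omega_c$, then $(\omega_a,n,\omega_c)$ would realize a $132$-pattern, a contradiction. Consequently the $k$ entries preceding $n$ are exactly $\{n-k,\dots,n-1\}$ and the $n-1-k$ entries following $n$ are exactly $\{1,\dots,n-1-k\}$. Since inversions are preserved under order-preserving relabeling, and since $\omega$ avoids $132$, the left block is (up to relabeling) a $132$-avoiding permutation $\sigma$ of size $k$ and the right block a $132$-avoiding permutation $\tau$ of size $n-1-k$.

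Next I would count inversions across this decomposition. The inversions of $\omega$ split into four disjoint families: the internal inversions $\mathrm{inv}(\sigma)$ and $\mathrm{inv}(\tau)$; the $n-1-k$ inversions formed by $n$ with each of the smaller entries to its right; and the $k(n-1-k)$ inversions formed by each (larger) left-block entry with each (smaller) right-block entry. Each left-block entry precedes $n$ and is smaller than $n$, so it contributes no inversion with $n$, and there is no further interaction. Hence
\begin{equation*}
\mathrm{inv}(\omega)=\mathrm{inv}(\sigma)+\mathrm{inv}(\tau)+(k+1)(n-1-k).
\end{equation*}
Summing $q^{\mathrm{inv}(\omega)}$ over $\omega\in\mathfrak{S}_n(132)$ and grouping according to the value of $k$ gives
\begin{equation*}
A_n(q)=\sum_{k=0}^{n-1}A_k(q)\,A_{n-1-k}(q)\,q^{(k+1)(n-1-k)},
\end{equation*}
which is precisely the recurrence \eqref{eq:recCn} after the shift $n\mapsto n-1$. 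Together with $A_0=A_1=1=C_0=C_1$, induction yields $A_n=C_n$.

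The main obstacle is the inversion bookkeeping: one must check that the four families above are genuinely disjoint and exhaustive, and that the cross-block contribution $k(n-1-k)$ combines with the $n-1-k$ inversions created by $n$ into exactly the product exponent $(k+1)(n-1-k)$ required by \eqref{eq:recCn}. As an independent check, one could instead use the Young-diagram bijection between $\mathfrak{S}_n(132)$ and $\mathcal{P}^1_n$ mentioned above and verify that $\mathrm{inv}(\omega)$ equals the area statistic $\mathrm{Area}(d)$ of the corresponding Dyck path, whereupon \eqref{eq:AreaCn} delivers \eqref{eq:132C} directly.
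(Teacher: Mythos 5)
Your proof is correct, but it follows a genuinely different route from the paper. The paper argues bijectively: it attaches to $\omega\in\mathfrak{S}_{n}(132)$ the code $\mathbf{h}(\omega)=(h_1,\ldots,h_n)$ with $h_i=\#\{j>i:\ j\text{ is left of }i\text{ in }\omega\}$, observes that $132$-avoidance forces $h_i\ge h_{i+1}$ (and $0\le h_i\le i-1$), so that $\mathbf{h}(\omega)$ is a Young diagram inside the staircase with $|\lambda|=\mathrm{inv}(\omega)$, and then reads off \eqref{eq:132C} from the area interpretation \eqref{eq:AreaCn} of $C_n(q)$ --- essentially the ``independent check'' you sketch in your last sentence. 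You instead derive the recurrence \eqref{eq:recCn} directly on $A_n(q)$ by splitting $\omega$ at the position of its maximal letter; your pattern argument (left block entirely above right block, so the left and right blocks carry the values $\{n-k,\ldots,n-1\}$ and $\{1,\ldots,n-1-k\}$) and your inversion bookkeeping $\mathrm{inv}(\omega)=\mathrm{inv}(\sigma)+\mathrm{inv}(\tau)+(k+1)(n-1-k)$ are both right, and the exponent matches \eqref{eq:recCn} after the shift $n\mapsto n-1$. The one point you should make explicit is that the decomposition is a bijection onto pairs $(\sigma,\tau)\in\mathfrak{S}_k(132)\times\mathfrak{S}_{n-1-k}(132)$: you verify that both blocks of a $132$-avoiding $\omega$ are $132$-avoiding, but for the sum to factor as $A_k A_{n-1-k}$ you also need the converse, namely that gluing any such pair around $n$ never creates a $132$-pattern (immediate, since any occurrence using letters from both blocks would need a left letter smaller than a right letter). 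As for what each approach buys: the paper's bijection is reused implicitly for the $r$-generalization (Proposition \ref{prop:231Crn}) and exhibits $\mathrm{inv}$ as an area statistic, while your inductive argument is more elementary and self-contained modulo the quoted recurrence \eqref{eq:recCn}; both ultimately rest on a stated property of $C_n(q)$ rather than its defining series \eqref{eq:defCn}.
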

\begin{proof}
Let $\mathbf{h}(\omega):=h_{1}h_{2}\ldots h_{n}$, $1\le i\le n$, be a sequence of non-negative integers 
defined by
\begin{align*}
h_{i}:=\#\{j>i : \text{ $j$ is left to $i$ in $\omega$ }\}.
\end{align*}
It is easy to see that $0\le h_{i}\le i-1$.
Further, if $\omega\in\mathfrak{S}_{n}(132)$, $\mathbf{h}(\omega)$ satisfies 
$h_{i}\ge h_{i+1}$.
Then, $\mathbf{h}(\omega)$ is bijective to a Young diagram $\lambda$ inside of the 
staircase Young diagram $(n-1,n-2,\ldots,1,0)$.
The sum $\sum_{i}h_{i}$ is equal to the number of inversions in $\omega$.  
By the identification of $\mathbf{h}(\omega)$ with $\lambda$, 
this sum is equal to the number of unit boxes in the Young diagram $\lambda$.
From these, we have Eq. (\ref{eq:132C}).
\end{proof}

Following \cite{FurHof85}, we define another $q$-Catalan numbers $E_{n}=E_{n}(q)$ 
by the expansion formula
\begin{align*}
z=\sum_{n=1}^{\infty}
\genfrac{}{}{}{}{E_{n}(q)z^{n}}{q^{\genfrac{(}{)}{0pt}{}{n}{2}}(-q^{-n}z)_{n}(-qz)_{n}}.
\end{align*}
By using the $q$-Lagrange formula, one can obtain  
\begin{align*}
E_{n}(q)=\genfrac{}{}{}{}{1}{[n+1]}\genfrac{[}{]}{0pt}{}{2n}{n}.
\end{align*}

Let $d$ be a Dyck path, i.e., a word of $0$'s and $1$'s. 
Then, the major index of $d$ is given by $\mathrm{maj}(d)$.

\begin{prop}[\cite{FurHof85}]
We have 
\begin{align*}
\sum_{d\in\mathcal{P}^{1}_{n}}q^{\mathrm{maj}(d)}
=E_{n}(q)=\genfrac{}{}{}{}{1}{[n+1]}\genfrac{[}{]}{0pt}{}{2n}{n}.
\end{align*}
\end{prop}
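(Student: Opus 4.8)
The plan is to show that the major-index generating function of Dyck paths satisfies the very expansion that defines the $E_n(q)$, so that the closed form $\frac{1}{[n+1]}\genfrac{[}{]}{0pt}{}{2n}{n}$ is inherited from the $q$-Lagrange evaluation already recorded above. Writing $M_n(q):=\sum_{d\in\mathcal{P}^1_n}q^{\mathrm{maj}(d)}$, I would aim to prove that $\sum_{n\ge1}M_n(q)z^n$, inserted into the same kernel $q^{\binom{n}{2}}(-q^{-n}z)_n(-qz)_n$, reproduces $z$; by uniqueness of such an expansion this forces $M_n=E_n$. Note that the indices match at the outset, since $E_1=1$ and $E_2=1+q^2$ agree with $M_1$ (the path $01$) and $M_2$ (the paths $0011,0101$).

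The engine is the first-return decomposition. Writing a nonempty Dyck word as $d=0\,A\,1\,B$, where $A\in\mathcal{P}^1_k$ is the excursion before the first return to the diagonal and $B\in\mathcal{P}^1_{n-1-k}$ is the remainder, I would track how the descent set (the $10$-valleys) transforms. The leading $0$ is never a descent; each descent of $A$ shifts one step to the right; the returning $1$ becomes a descent exactly when $B\neq\emptyset$; and each descent of $B$ is displaced by the prefix length $2k+2$. Hence
\begin{align*}
\mathrm{maj}(d)=\mathrm{maj}(A)+\mathrm{des}(A)+\mathrm{maj}(B)+(2k+2)\bigl(\mathrm{des}(B)+\delta(B\neq\emptyset)\bigr).
\end{align*}

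The essential feature is that $\mathrm{maj}$ is \emph{not} additive under this decomposition: the displacement of $B$ by $2k+2$ couples the position shift to the descent count, so I must also record the number of descents. I would therefore pass to the refined generating function $\mathcal{N}_n(q,t):=\sum_{d\in\mathcal{P}^1_n}q^{\mathrm{maj}(d)}t^{\mathrm{des}(d)}$, for which the identity above becomes a $q$-difference equation: the factor $A$ contributes $\mathcal{N}_k(q,qt)$, the returning valley contributes $(q^{2k+2}t)^{\delta(B\neq\emptyset)}$, and the displacement of $B$ turns into the dilation $t\mapsto q^{2k+2}t$, so $B$ contributes $\mathcal{N}_{n-1-k}(q,q^{2k+2}t)$. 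Crucially, this recursion does not close at $t=1$, because the substitutions $t\mapsto qt$ and $t\mapsto q^{2k+2}t$ carry us off the line $t=1$; this is precisely why a one-variable convolution is impossible and the full two-variable equation is forced. I would then solve this equation by $q$-Lagrange inversion (following \cite{FurHof85}): iterating the dilations reproduces the kernel $q^{\binom{n}{2}}(-q^{-n}z)_n(-qz)_n$, and extracting coefficients identifies them with the $E_n(q)$ of the excerpt, so that $t=1$ yields $\sum_d q^{\mathrm{maj}(d)}=E_n(q)$.

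The main obstacle is exactly this nonlocal behaviour of the major index. The area statistic decomposes additively, which is why it produces the clean convolution (\ref{eq:recCn}) for $C_n$; by contrast, $\mathrm{maj}$ entangles the shift $2k+2$ with $\mathrm{des}(B)$, so the one-variable recurrence must be lifted to a $q$-difference equation in an auxiliary descent variable. The delicate point is to verify that the iterated dilations $t\mapsto q^{2k+2}t$ assemble precisely into the kernel $q^{\binom{n}{2}}(-q^{-n}z)_n(-qz)_n$ appearing in the definition of $E_n(q)$, and that $q$-Lagrange inversion applies in the required form; the small cases $E_2=1+q^2$ and $E_3=1+q^2+q^3+q^4+q^6$, matched against the five Dyck paths of size three, serve as a useful check that the descent convention and the displacement bookkeeping are correct.
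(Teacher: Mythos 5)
Your argument is correct and follows essentially the same route as the paper: the paper cites this statement to \cite{FurHof85} and proves its $r$-generalization (Proposition \ref{prop:majE} via Theorem \ref{thrm:E}) by exactly your strategy --- first-return decomposition, refinement of $\mathrm{maj}$ by the descent count to get a $q$-difference recursion, and identification with the defining kernel by the method of \cite[Section 5]{FurHof85}. Your two-variable function $\mathcal{N}_n(q,t)$ is the specialization $a=b=q$, $x=t$ of the paper's $E_n^{(r)}(x;a,b)$ at $r=1$, and your displacement formula for $\mathrm{maj}(0A1B)$ checks out.
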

		
\subsection{\texorpdfstring{$r$}{r}-Dyck paths and \texorpdfstring{$r$}{r}-Stirling permutations}
Recall that an $r$-Dyck path $\mu\in\mathcal{P}^{r}_{n}$ is an up-right path 
starting from $(0,0)$. Since the path $\mu$ is below the top path, the region 
between $\mu$ and the top path gives a Young diagram $Y(\mu)$.
The inversion numbers $\mathrm{Inv}(\mu)$ and $\mathrm{iInv}(\mu)$ are defined by 
\begin{align*}
\mathrm{Inv}(\mu)&:=\genfrac{}{}{}{}{r}{2}n(n-1)-|Y(\mu)|, \\
\mathrm{iInv}(\mu)&:=|Y(\mu)|,
\end{align*}
where $|Y(\mu)|$ is the number of unit squares in the Young diagram $Y(\mu)$.

We generalize the results of \cite{Car72,CarRio64,FurHof85} to $r$-Dyck paths.
Let $C_n^{(r)}$ be the number of $r$-Dyck paths of size $n$.
The generating function $f(t):=\sum_{n=0}^{\infty}C_{n}^{(r)}t^{n}$ of $r$-Dyck paths 
satisfies the recurrence relation
\begin{align*}
f(t)=1+tf(t)^{r+1}.
\end{align*}
We set $z=t^{1/r}f(t)$. 
Then, $z$ satisfies 
\begin{align*}
z=\sum_{n=0}^{\infty}C_{n}^{(r)}z^{rn+1}(1-z^r)^{rn+1}.
\end{align*}
We define the $q$-analogue of Fuss--Catalan numbers $C^{(r)}_{n}:=C^{(r)}_n(q)$ by 
\begin{align}
\label{eq:Cnrinsum}
z=\sum_{n=0}^{\infty}C_{n}^{(r)}(q)z^{rn+1}(1-z^r)(1-qz^r)(1-q^2z^r)\cdots(1-q^{rn}z^r).
\end{align}
As in the case of $r=1$, the exponent of $q$ in $C_{n}^{(r)}$ counts the 
area between the $r$-Dyck paths and the top path $0^n1^{rn}$.

\begin{prop}
We have 
\begin{align}
\label{eq:iInvCrn}
\sum_{\mu\in\mathcal{P}^{r}_n}q^{\mathrm{iInv}(\mu)}=C^{(r)}_{n}(q).
\end{align}
\end{prop}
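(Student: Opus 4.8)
The plan is to pass from $r$-Dyck paths to partitions inside a staircase, and then to prove the resulting generating-function identity by matching a recurrence coming from the first-return decomposition against the defining functional equation \eqref{eq:Cnrinsum}.

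First I would make the Young-diagram picture explicit. Reading the row lengths of $Y(\mu)$ from the top identifies it with a partition $\lambda=\lambda(\mu)$ contained in the staircase $\sigma_n:=(r(n-1),r(n-2),\ldots,r,0)$, so that $\mathrm{iInv}(\mu)=|Y(\mu)|=|\lambda|$. Taking the south-east boundary of such a $\lambda$ returns an $r$-Dyck path, so $\mu\mapsto\lambda(\mu)$ is a bijection from $\mathcal P^r_n$ onto the partitions inside $\sigma_n$, whose number is the Fuss--Catalan number $|\mathcal P^r_n|$. Hence \eqref{eq:iInvCrn} is equivalent to showing
\[
A_n(q):=\sum_{\lambda\subseteq\sigma_n}q^{|\lambda|}=C_n^{(r)}(q).
\]

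Next I would derive a recurrence for $A_n$. Writing $\mathrm{iInv}(\mu)$ as the sum of the $x$-coordinates of the up-steps of $\mu$ (the top path has all up-steps at $x=0$ and the lowest path at $x=0,r,\ldots,r(n-1)$, recovering $|\sigma_n|=r\binom n2$), I would use the prime/first-return decomposition $\mu=0\,p_1\,1\,p_2\,1\cdots1\,p_{r+1}$, the path form of $f=1+tf^{r+1}$, where $p_1,\ldots,p_{r+1}$ are $r$-Dyck paths of sizes $m_1,\ldots,m_{r+1}$ with $m_1+\cdots+m_{r+1}=n-1$. Since $p_j$ begins at $x=r(m_1+\cdots+m_{j-1})+(j-1)$, tracking $x$-coordinates gives $\mathrm{iInv}(\mu)=\sum_j\mathrm{iInv}(p_j)+E$ with $E=r\sum_{i<j}m_im_j+\sum_j(j-1)m_j$, so that
\[
A_n(q)=\sum_{\substack{m_1,\ldots,m_{r+1}\ge0\\ m_1+\cdots+m_{r+1}=n-1}} q^{\,r\sum_{i<j}m_im_j+\sum_j(j-1)m_j}\prod_{j=1}^{r+1}A_{m_j}(q),\qquad A_0(q)=1.
\]
For $r=1$ the cross term is $(m_1+1)m_2$ and this is exactly \eqref{eq:recCn}, so this step generalizes the $r=1$ computation.

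Finally I would show that $C_n^{(r)}(q)$ satisfies the same recurrence with $C_0^{(r)}=1$ and conclude by induction on $n$. Expanding $(z^r;q)_{rn+1}=\sum_{S}(-z^r)^{|S|}q^{\sum S}$ over subsets $S\subseteq\{0,1,\ldots,rn\}$ and extracting the coefficient of $z^{rN+1}$ in \eqref{eq:Cnrinsum} yields the alternating identity $\sum_{n}(-1)^{N-n}q^{\binom{N-n}{2}}\genfrac{[}{]}{0pt}{}{rn+1}{N-n}C_n^{(r)}=\delta(N=0)$, and one must check that this and the multiplicative recurrence above determine the same sequence. I expect this matching to be the main obstacle, because the weight $q^{\,r\sum_{i<j}m_im_j}$ turns the $(r{+}1)$-fold convolution into a genuinely $q$-deformed product rather than an ordinary one. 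I would resolve it either by applying $q$-Lagrange inversion to \eqref{eq:Cnrinsum} to read off the multivariate recurrence directly, or, more combinatorially, by proving $\sum_n A_n(q)\,z^{rn+1}(z^r;q)_{rn+1}=z$ via a sign-reversing involution on pairs $(\lambda,S)$ with $\lambda\subseteq\sigma_n$ and $S$ an $(N-n)$-subset of $\{0,\ldots,rn\}$, preserving $|\lambda|+\sum S$ and toggling $n$, which cancels every coefficient of $z^{rN+1}$ with $N\ge1$. As a sanity check the scheme already reproduces $C_2^{(2)}(q)=1+q+q^2$ from both sides.
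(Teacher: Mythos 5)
Your combinatorial reduction is exactly the paper's. The paper also reads $\mathrm{iInv}$ as the area between $\mu$ and the top path, uses the same first-return decomposition $d=0d_11d_21\cdots1d_{r+1}$, and arrives at the same recurrence: it first proves $C'_n(q)=\sum_{l_0+\cdots+l_r=n-1}\prod_iC'_{l_i}(q)\,q^{\sum_{j}jl_j}$ for $C'_n(q)=\sum_\mu q^{\mathrm{Inv}(\mu)}$ and then passes to $\widetilde C_n(q)=q^{r\binom{n}{2}}C'_n(q^{-1})=A_n(q)$, whose recurrence has exponent $\sum_jjl_j+r\sum_{i<j}l_il_j$ --- identical to your $E$ after the reindexing $l_{j-1}=m_j$ (the paper's range ``$l_0+\cdots+l_r=n+1$'' is a typo for $n-1$). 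Your derivation of $E$ by summing $x$-coordinates of up steps is clean and correct, as is the identification with partitions in the staircase $\sigma_n$.

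The genuine gap is the step you yourself flag as the main obstacle: you never actually prove that the coefficients $C_n^{(r)}(q)$ defined by Eq.~(\ref{eq:Cnrinsum}) satisfy this $(r+1)$-fold $q$-convolution. The alternating identity you get by expanding $(z^r;q)_{rn+1}$ does determine the sequence, but showing it is \emph{equivalent} to the multiplicative recurrence is the entire remaining content, and ``one must check'' leaves it open. The paper closes this by the substitution underlying your $q$-Lagrange option: rearranging Eq.~(\ref{eq:Cnrinsum}) (using $C_0^{(r)}=1$) gives $z^{r+1}=\sum_{k\ge1}C_k^{(r)}z^{rk+1}(z^r)_{rk+1}$, while writing $z^{r+1}=z\cdot\prod_{j=1}^{r}\bigl(q^{-k-j/r}\cdot q^{k+j/r}z\bigr)$, substituting the defining series for the leading $z$ and the series evaluated at $q^{k+j/r}z$ for the $j$-th remaining factor, makes the Pochhammer products recombine into $(z^r)_{rn+1}$ and the accumulated powers of $q$ produce exactly $q^{\sum_jjl_j+r\sum_{i<j}l_il_j}$; comparing coefficients of $z^{rn+1}(z^r)_{rn+1}$ yields the recurrence. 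If you execute this substitution (or your sign-reversing involution on pairs $(\lambda,S)$, which would be a nice combinatorial alternative), your proof is complete and is essentially the paper's.
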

\begin{proof}
Let $C'_{n}(q)$ be the sum $\sum_{\mu\in\mathcal{P}^{r}_{n}}q^{\mathrm{Inv}(\mu)}$. 
Then, the left hand side of Eq. (\ref{eq:iInvCrn}) is equal to 
$\widetilde{C}_{n}(q):=q^{r\genfrac{(}{)}{0pt}{}{n}{2}}C'_{n}(q^{-1})$.
We decompose an $r$-Dyck path $d$
as $d=0d_11d_21\cdots d_{r}1d_{r+1}$, where $d_{i}\in\mathcal{P}^{r}_{n_i}$ with $n_i\ge0$.
The generating function $f'(t):=\sum_{n=0}^{\infty}C'_{n}(q)t^{n}$ satisfies the recurrence 
relation 
\begin{align*}
f'(t)=1+tf'(t)f'(qt)f'(q^2t)\cdots f'(q^{r}t).
\end{align*}
The sum $C'_{n}(q)$ satisfies the recurrence relation
\begin{align}
\label{eq:Cdashed}
C'_{n}(q)=\sum_{l_0+\ldots+l_{r}=n-1}C'_{l_{0}}(q)\ldots C'_{l_{r}}(q)q^{\sum_{j=1}^{r}jl_j}.
\end{align}
Then, the left hand side of Eq. (\ref{eq:iInvCrn}) satisfies the recurrence relation
\begin{align*}
\widetilde{C}_{n}(q)=\sum_{l_0+\ldots+l_r=n+1}
\widetilde{C}_{l_0}(q)\ldots \widetilde{C}_{l_r}(q)q^{\sum_{j=1}^{r}l_{j}(j+r\sum_{i=0}^{j-1}l_i)}.
\end{align*}
From Eq. (\ref{eq:Cnrinsum}), we have 
\begin{align*}
z^{r+1}&=\sum_{k=0}^{\infty}C_{k}^{(r)}(q)z^{rk+1}(z^r)_{rk+1}
\prod_{j=1}^{r}q^{-k-j/r}\cdot (q^{k+j/r}z), \\
&=\sum_{k=0}^{\infty}C_{k}^{(r)}(q)z^{rk+1}(z^r)_{rk+1}
\prod_{j=1}^{r}q^{-k-j/r}\sum_{l_j=0}^{\infty}C_{l_j}(q)(q^{k+j/r}z)^{rl_j+1}(q^{rk+j}z^r)_{rl_{j}+1},\\
&=\sum_{n=0}^{\infty}
\left(\sum_{k+l_1+\ldots+l_r=n-1}C_{k}C_{l_1}\ldots C_{l_r}q^{\sum_{j=1}^{r}(j+r\sum_{i=1}^{r}j_i)l_j} \right)z^{rn+1}(z^r)_{rn+1}.
\end{align*}
We rewrite Eq. (\ref{eq:Cnrinsum}) as 
\begin{align*}
z&=C_{0}z(1-z^r)+\sum_{k=1}^{\infty}C_{k}z^{rk+1}(z^r)_{rk+1}, \\
&=z-z^{r+1}+\sum_{k=1}^{\infty}C_{k}z^{rk+1}(z^r)_{rk+1}.
\end{align*}
We have two expressions for $z^{r+1}$.
By comparing coefficients of the two expressions above, we obtain the  recurrence 
relation for $C_{n}^{(r)}(q)$, which is nothing but Eq. (\ref{eq:Cdashed}). 
From these, we have Eq. (\ref{eq:iInvCrn}), which completes the proof.
\end{proof}

Let $d\in\{0,1\}^{\ast}$ be an $r$-Dyck path in $\mathcal{P}^{r}_{n}$.
The major index of $d$ is given by $\mathrm{maj}(d)$.

We define a new kind of $q$-analogue of the Fuss--Catalan numbers $E^{(r)}_{n}=E^{(r)}_{n}(q)$
by means of the expansion formula
\begin{align}
\label{eq:Einqq}
z=\sum_{n=1}^{\infty}
\genfrac{}{}{}{}{E^{(r)}_{n}z^{n}}{q^{\genfrac{(}{)}{0pt}{}{n}{2}}(-q^{-n}z)_{n}(-qz)_{rn}}.
\end{align}

\begin{prop}
\label{prop:majE}
We have
\begin{align}
\label{eq:majEinqq}
\sum_{d\in\mathcal{P}^{r}_{n}}q^{\mathrm{maj}(d)}=E^{(r)}_{n}(q).
\end{align}
\end{prop}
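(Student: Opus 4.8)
The plan is to pin down the coefficients $E^{(r)}_n(q)$ directly from the defining relation and match them against the major-index generating function. Writing $\phi_n(z):=z^n\bigl/\bigl(q^{\binom{n}{2}}(-q^{-n}z)_n(-qz)_{rn}\bigr)$, Eq.~\eqref{eq:Einqq} expresses $z$ as an expansion $z=\sum_{n\ge1}E^{(r)}_n(q)\,\phi_n(z)$ in a non-standard basis, which is precisely the setting of the $q$-Lagrange inversion formula. I would therefore generalize the argument that settles the $r=1$ case in \cite{FurHof85}: first set up a functional equation for the generating function
\[
\Phi(z):=\sum_{n\ge0}\Bigl(\sum_{d\in\mathcal{P}^{r}_{n}}q^{\mathrm{maj}(d)}\Bigr)z^{n},
\]
and then show that the expansion~\eqref{eq:Einqq} forces the same equation on $\sum_{n\ge0}E^{(r)}_n(q)z^{n}$, so that the two series coincide.

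For the functional equation I would reuse the first-return decomposition $d=0\,d_1\,1\,d_2\,1\cdots d_r\,1\,d_{r+1}$ with $d_i\in\mathcal{P}^{r}_{n_i}$ already used for the inversion number. Letting $P_i:=i+(r+1)\sum_{j<i}n_j$ denote the position in $d$ immediately before the block $d_i$, a direct count of descents gives
\[
\mathrm{maj}(d)=\sum_{i=1}^{r+1}\bigl(\mathrm{maj}(d_i)+P_i\,|\mathrm{Des}(d_i)|\bigr)+\sum_{i=1}^{r}P_{i+1}\,\delta(d_{i+1}\neq\varnothing),
\]
together with the additive relation $|\mathrm{Des}(d)|=\sum_{i}|\mathrm{Des}(d_i)|+\sum_{i}\delta(d_{i+1}\neq\varnothing)$. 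The factors $P_i$ (which depend on the cumulative sizes $n_{<i}$) and the factors $|\mathrm{Des}(d_i)|$ show that $\mathrm{maj}$ is \emph{not} additive under the decomposition, so a single-variable recurrence cannot close. Mirroring the inversion-number computation, where the cumulative-size weighting was encoded by the shifted arguments in $f'(t)=1+t\prod_{j=0}^{r}f'(q^{j}t)$, the cross term $P_i\,|\mathrm{Des}(d_i)|$ should be encoded by $q$-shifts in $z$ of the form $q^{(r+1)n_{<i}}$; I would accordingly work with the refined series that also marks the number of descents, derive a functional equation with $q$-shifted arguments, and recover $\Phi$ by specializing the descent variable to $1$ at the end.

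The remaining step is to identify this functional equation with the one implied by~\eqref{eq:Einqq}. Rewriting the denominator of $\phi_n(z)$ as $q^{-n}\prod_{i=1}^{n}(q^{i}+z)\prod_{i=1}^{rn}(1+q^{i}z)$ exhibits it as the natural $r$-analogue of the kernel used in \cite{FurHof85}, and the $q$-Lagrange inversion formula then extracts $E^{(r)}_n(q)$ as a coefficient that I would match term by term against the descent bookkeeping above, starting from the base case $E^{(r)}_0=1$. The main obstacle is exactly this reconciliation: the position-dependent weights $P_i\,|\mathrm{Des}(d_i)|$ and the junction contributions $P_{i+1}\,\delta(d_{i+1}\neq\varnothing)$ must be shown to correspond precisely to the powers of $q$ carried by the factors $q^{\binom{n}{2}}$, $(-q^{-n}z)_n$ and $(-qz)_{rn}$ in the denominator of~\eqref{eq:Einqq}. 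Verifying that these $q$-exponents line up --- rather than the algebraic inversion step or the base case --- is where I expect the real work to lie, and it is the point at which the specific shape of the expansion formula becomes indispensable.
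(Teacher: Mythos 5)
Your plan follows essentially the same route as the paper: the result is proved there as Theorem \ref{thrm:E}, via the same first-return decomposition $w=0w_01w_1\cdots1w_r$, a variable marking $|\mathrm{Des}(w)|$, and the F\"urlinger--Hofbauer iteration matching the resulting recurrence against the product expansion, after which Proposition \ref{prop:majE} is the specialization $(a,b,x)=(q,q,1)$. The only cosmetic difference is that the paper splits your positional weight $P_i$ into the number of $0$'s and the number of $1$'s preceding each descent (tracked by two variables $a,b$ with $\mathrm{maj}=\alpha+\beta$ recovered at $a=b=q$), which is exactly what makes the two separate Pochhammer factors $(-q^{-n}z)_{n}$ and $(-qz)_{rn}$ in \eqref{eq:Einqq} emerge.
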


To prove Proposition \ref{prop:majE}, we consider a generalization of Eq. (\ref{eq:Einqq}).
We refine the major statistic following \cite[Section 5]{FurHof85}:
For a word $w:=w_1w_2\ldots w_{n}\in\{0,1\}^{\ast}$, we set
\begin{align*}
\alpha(w)&:=\sum_{i\in\mathrm{Des}(w)}|\{j\le i : w_{j}=0\}|, \\
\beta(w)&:=\sum_{i\in\mathrm{Des}(w)}|\{j\le i : w_{j}=1\}|,
\end{align*}
where $\mathrm{Des}(w)$ is the descent set of $w$. 
It is obvious that $\mathrm{maj}(w)=\alpha(w)+\beta(w)$.

\begin{theorem}
\label{thrm:E}
The following statements are equivalent:
\begin{align}
\label{eq:defE}
&E^{(r)}_{n}(x;a,b):=\sum_{d\in\mathcal{P}^{r}_{n}}
x^{|\mathrm{Des}(d)|}a^{\alpha(d)}b^{\beta(d)}, \\
\label{eq:Einprod}
&z=\sum_{n=1}^{\infty}\genfrac{}{}{}{}{a^{-\genfrac{(}{)}{0pt}{}{n}{2}}E_{n}^{(r)}(x;a,b)z^{n}}
{(1+a^{-1}z)\cdots(1+a^{-n}z)(1+xbz)\cdots(1+xb^{rn}z)}, \\
\label{eq:Erec}
&E_{n+1}^{(r)}(x)=\sum_{n_0+n_1+\ldots+n_{r}=n}
E_{n_0}^{(r)}(ax)\prod_{i=1}^{r}E_{n_i}(a^{d_1(i)}b^{d_2(i)}x)
(a^{d_{1}(i)}b^{d_{2}(i)}x)^{\delta(n_i>0)},
\end{align}
where $E_{0}^{(r)}(x):=1$, $\delta(P)$ is the Kronecker delta function and 
\begin{align*}
d_1(i):=1+\sum_{j=1}^{i-1}n_{j}, \qquad d_2(i):=i+\sum_{j=1}^{i-1}n_{j}.
\end{align*}
\end{theorem}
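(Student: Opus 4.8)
The plan is to prove the three-fold equivalence through two links. First I would show that the combinatorial generating function defined in \eqref{eq:defE} satisfies the recurrence \eqref{eq:Erec}; then I would show that \eqref{eq:Erec} is equivalent to the expansion \eqref{eq:Einprod}. Since \eqref{eq:Erec} together with $E_0^{(r)}(x)=1$ expresses $E_{n+1}^{(r)}$ in terms of the lower $E_m^{(r)}$ and hence determines the whole family $\{E_n^{(r)}(x;a,b)\}_{n\ge 0}$ uniquely, these two links force all three statements to describe the same polynomials, which is the asserted equivalence.

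For the first link I would use the first-return decomposition underlying $f(t)=1+tf(t)^{r+1}$, the same one employed in the proof of \eqref{eq:iInvCrn}: a nonempty path $d\in\mathcal{P}^r_{n+1}$ factors uniquely as $d=0\,d_0\,1\,d_1\,1\cdots 1\,d_r$ with $d_i\in\mathcal{P}^r_{n_i}$ and $n_0+\cdots+n_r=n$. The core of the argument is to track $|\mathrm{Des}|$, $\alpha$ and $\beta$ under this concatenation. Two kinds of descents occur: those internal to a factor $d_i$, and those created at a junction where a separating letter $1$ is immediately followed by the letter $0$ opening a nonempty factor. For an internal descent at position $j$, both prefix counts $|\{k\le j: w_k=0\}|$ and $|\{k\le j: w_k=1\}|$ increase by the number of $0$'s, respectively $1$'s, lying to the left of $d_i$ in the whole word; summed over the descents of $d_i$ this multiplies its contribution by $a^{p_i|\mathrm{Des}(d_i)|}b^{q_i|\mathrm{Des}(d_i)|}$, where $p_i,q_i$ are these prefix counts, which is exactly the substitution $x\mapsto a^{p_i}b^{q_i}x$ inside $E^{(r)}_{n_i}$. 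A junction descent before a nonempty $d_i$ adds one further descent sitting on a position with $p_i$ zeros and $q_i$ ones to its left, producing the factor $(a^{p_i}b^{q_i}x)^{\delta(n_i>0)}$. Reading $p_i$ and $q_i$ off the decomposition by counting the initial $0$, the separating $1$'s and the letters of the preceding factors determines the exponents appearing in \eqref{eq:Erec}.

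For the second link I would mimic the manipulation used for \eqref{eq:Cdashed} in the proof of \eqref{eq:iInvCrn}. Writing $F(z)$ for the right-hand side of \eqref{eq:Einprod}, I would substitute \eqref{eq:Erec} into the coefficients and reorganize $F$ as a convolution. The point is that under the shift $x\mapsto a^{d_1(i)}b^{d_2(i)}x$ the $b$-product $(1+xbz)\cdots(1+xb^{rn}z)$ and the $a$-product $(1+a^{-1}z)\cdots(1+a^{-n}z)$ rescale so that the factors attached to the initial up-step and to the $r$ separating right-steps telescope; matching the result against the defining identity $z=F(z)$, equivalently comparing two expansions of a suitable power of $z$ exactly as $z^{r+1}$ was compared earlier, shows that the coefficients of \eqref{eq:Einprod} are governed by \eqref{eq:Erec}, and conversely.

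I expect the first link to be the main obstacle. The substitution $x\mapsto a^{p_i}b^{q_i}x$ is legitimate only for the descents internal to a factor, so the junction descents must be isolated and assigned to the $\delta(n_i>0)$ terms without double counting, the empty factors (where no junction descent is created) being the delicate case. One must also keep in mind that the initial up-step never produces a descent yet shifts every later prefix count by one $0$, which is the source of the $ax$ in the first factor. By contrast the second link, though computational, should be essentially routine once the telescoping of the $q$-shifted products is set up, since it runs parallel to the argument already carried out for $C^{(r)}_n(q)$.
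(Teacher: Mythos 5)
Your proposal follows essentially the same route as the paper: the first equivalence is proved there by exactly the first-return decomposition $d=0d_0 1 d_1 1\cdots 1 d_r$ you describe, with the same bookkeeping separating internal descents (which produce the substitutions $x\mapsto a^{p_i}b^{q_i}x$ inside $E^{(r)}_{n_i}$) from junction descents (which produce the factors $(a^{p_i}b^{q_i}x)^{\delta(n_i>0)}$), and your reading of the prefix counts $p_i,q_i$ from the initial $0$, the separating $1$'s and all preceding factors is the right one. For the second equivalence the paper likewise works in the style of \cite{FurHof85}, but it organizes the telescoping you sketch explicitly through a chain of auxiliary series $F^{(i)}_{n}(x)$, $0\le i\le r$, obtained by adjoining the denominator factors $(1+xb^{rn+i+1}z)$ one at a time and iterating the resulting recurrence from $F^{(0)}_{n}=E_{n}$ up to $F^{(r)}_{n}$; your one-shot convolution would in practice have to reproduce this stepwise insertion, so the two arguments coincide in substance.
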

\begin{proof}
We first show (\ref{eq:defE}) $\Leftrightarrow$ (\ref{eq:Erec}).
We decompose the word $w\in\mathcal{P}^{r}_{n}$ into 
$w=0w_01w_1\ldots1w_{r}$ with $w_{i}\in\mathcal{P}^{r}_{n_{i}}$ for $0\le n_{i}\le n$.
Then, we have 
\begin{align*}
|\mathrm{Des}(w)|&=\sum_{i=0}^{r}|\mathrm{Des}(w_{i})|+\sum_{i=1}^{r}\delta(n_{i}>0),  \\
\alpha(w)&=\sum_{i=0}^{r}\alpha(w_{i})+\sum_{i=1}^{r}d_1(i)(1+|\mathrm{Des}(w_i)|)\delta(n_{i}>0),\\
\beta(w)&=\sum_{i=0}^{r}\alpha(w_{i})+\sum_{i=1}^{r}d_2(i)(1+|\mathrm{Des}(w_i)|)\delta(n_{i}>0).\\
\end{align*}
This gives the recurrence relation (\ref{eq:Erec}).

(\ref{eq:Einprod}) $\Leftrightarrow$ (\ref{eq:Erec}). 
We use the same method as in \cite[Section 5]{FurHof85}. 
We define $F_{n}^{(i)}(x)$ by the following expression: 
\begin{align}
\label{eq:defF}
z=\sum_{n=0}^{\infty}
\genfrac{}{}{}{}{a^{-\genfrac{(}{)}{0pt}{}{n}{2}}F_{n}^{(i)}(x)z^{n+1}}{(-z;a^{-1})_{n+1}(-xbz;b)_{rn+i}}.
\end{align}
We add the factor $(1+xb^{rn+i+1}z)$ in the denominator of the right hand side of Eq. (\ref{eq:defF}).
Note that $(1+xb^{rn+i+1}z)=(1+xa^{n}b^{rn+i+1}(a^{-n}z))$. We replace $a^{-n}z$ by the series (\ref{eq:Einprod})
with $xa^{n}b^{rn+i+1}$ instead of $x$.
Then, Eq. (\ref{eq:defF}) can be written as 
\begin{align*}
z&=\sum_{n=0}^{\infty}
\genfrac{}{}{}{}{a^{-\genfrac{(}{)}{0pt}{}{n}{2}}F_{n}^{(i)}(x)z^{n+1}}{(-z;a^{-1})_{n+1}(-xbz;b)_{rn+i+1}}
\left(1+xa^{n}b^{rn+i+1}\sum_{j=1}^{\infty}\genfrac{}{}{}{}{a^{-\genfrac{(}{)}{0pt}{}{j}{2}}E_{n}(xa^{n}b^{rn+i+1})z^{j}a^{-nj}}
{(-za^{-n-1};a^{-1})_{n+1}(-xb^{rn+i+2}z;b)_{rj}}
\right), \\
&=\sum_{n=0}^{\infty}\genfrac{}{}{}{}{a^{-\genfrac{(}{)}{0pt}{}{n}{2}}z^{n+1}}{(-z;a^{-1})_{n+1}(-xbz;b)_{rn+i+1}}
\left(F_{n}^{(i)}(x)+x\sum_{k=0}^{n-1}a^{k}b^{rk+i+1}F_{k}^{(i)}E_{n-k}(xa^kb^{rk+i+1})\right),
\end{align*}
where $E_{n}(x):=E_{n}^{(r)}(x;a,b)$ for short.
By definition of $F_{n}^{(i)}(x)$, we have 
\begin{align}
\label{eq:Frec}
F_{n}^{(i+1)}(x)=F_{n}^{(i)}(x)+x\sum_{k=0}^{n-1}a^{k}b^{rk+i+1}F_{k}^{(i)}E_{n-k}(xa^kb^{rk+i+1}).
\end{align}
Since we have $F_{n}^{(0)}(x)=E_{n}(x)$, and $F_{n}^{(r)}$ is also expressed in terms of $E_{n}(x)$,
we obtain the recurrence relation (\ref{eq:Erec}).
\end{proof}

\begin{proof}[Proof of Proposition \ref{prop:majE}]
From Theorem \ref{thrm:E}, Eq. (\ref{eq:Einqq}) is obtained by setting 
$(a,b,x)=(q,q,1)$.
Then, we have Eq. (\ref{eq:majEinqq}) from Eqs. (\ref{eq:defE}) and (\ref{eq:Einprod}).
\end{proof}

For example, the first few values of $E_{n}^{(2)}$ are given by 
\begin{align*}
E_{1}^{(2)}&=1, \\
E_{2}^{(2)}&=1+abx+ab^2x, \\
E_{3}^{(2)}&=1+abx+a^2bx+ab^2x+a^2b^2x+a^2b^3x+a^2b^4x+
a^3b^3x^2+a^3b^4x^2+2a^3b^5x^2+a^3b^6x^2.
\end{align*}

\subsection{Major index on \texorpdfstring{$r$}{r}-Stirling permutations}
We define two statistics, which we call inversion number and major index, 
on the $r$-Stirling permutations.
Let $\nu\in\mathfrak{S}^{(r)}_{n}$ be an $r$-Stirling permutation of size $n$. 

\begin{defn}
\label{defn:INVMAJ}
The inversion number $\mathrm{INV}(\nu)$ of $\nu$ is given by 
\begin{align*}
\mathrm{INV}(\nu):=\genfrac{}{}{}{}{1}{r}\mathrm{inv}(\nu). 
\end{align*}
Similarly, the major index $\mathrm{MAJ}(\nu)$ of $\nu$ is define by 
\begin{align*}
\mathrm{MAJ}(\nu):=\sum_{j\in J(\nu)}j,
\end{align*}
where $J(\nu)$ is the set of positive integers 
\begin{align}
\label{eq:Jnu}
J(\nu)=\{1\le j\le r(n-1): \nu_{j}>\nu_{j+1}=\nu_{j+2}=\cdots=\nu_{j+r}\}.
\end{align}
\end{defn}

\begin{remark}
In the case of $r=1$, the statistics $\mathrm{INV}(\nu)$ and $\mathrm{MAJ}(\nu)$
are equivalent to $\mathrm{inv}(\nu)$ and $\mathrm{maj}(\nu)$ for 
$\nu\in\mathfrak{S}_{n}$.
\end{remark}

\begin{example}
Let $(n,r)=(3,2)$. Then, the inversion numbers and major indices for 
the $2$-Stirling permutations of size three are given in
Table \ref{table:majinv}.
\begin{table}[ht]
\begin{tabular}{c|cccccccc}\hline
 & $112233$ &  $211233$ & $221133$ & $113223$ & $311223$ & $321123$ & $322113$ & $113322$\\ \hline
$\mathrm{INV}$ & $0$ & $1$ & $2$ & $1$ & $2$ & $3$ & $4$ & $2$ \\ \hline
$\mathrm{MAJ}$ & $0$ & $1$ & $2$ & $3$ & $1$ & $2$ & $4$ & $4$ \\ \hline 
\end{tabular}
\\[12pt]
\begin{tabular}{c|cccc|ccc}\hline
  & $311322$ & $331122$ & $332112$ & $332211$ & $223113$ & $223311$ & $322311$  \\ \hline
$\mathrm{INV}$  & $3$ & $4$ & $5$ & $6$ & $3$ & $4$ & $5$ \\ \hline
$\mathrm{MAJ}$  & $5$ & $2$ & $3$ & $6$ & $3$ & $4$ & $5$ \\ \hline 
\end{tabular}\\[12pt]
\caption{The major indices and inversion numbers for $2$-Stirling permutations of size three.}
\label{table:majinv}
\end{table}
\end{example}
The last three $2$-Stirling permutations are not $231$-avoiding $2$-Stirling permutations.

\begin{theorem}
\label{thrm:INVMAJ}
We have 
\begin{align*}
\sum_{\nu\in\mathfrak{S}^{(r)}_{n}}q^{\mathrm{INV}(\nu)}
=\sum_{\nu\in\mathfrak{S}^{(r)}_{n}}q^{\mathrm{MAJ}(\nu)}
=[r+1][2r+1]\cdots[(n-1)r+1].
\end{align*}
\end{theorem}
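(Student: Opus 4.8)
The plan is to prove both equalities by induction on $n$, showing that each of the two generating functions satisfies the recursion $P_n(q)=[(n-1)r+1]\,P_{n-1}(q)$ with $P_1(q)=1$; since $\mathfrak{S}^{(r)}_1=\{1^r\}$ has $\mathrm{INV}=\mathrm{MAJ}=0$, this yields $P_n(q)=[r+1][2r+1]\cdots[(n-1)r+1]$ for both statistics at once. The engine of the induction is the observation that the $r$ copies of the smallest letter $1$ are always consecutive in $\nu\in\mathfrak{S}^{(r)}_n$: if two of them were separated, the letters in between would have to be smaller than $1$. Deleting this block and decreasing every remaining letter by one gives $\nu'\in\mathfrak{S}^{(r)}_{n-1}$; conversely, inserting the block $1^r$ into any of the $(n-1)r+1$ gaps of an element of $\mathfrak{S}^{(r)}_{n-1}$ (written on the letters $2,\ldots,n$) is always legal, because the only new adjacencies involve $1$, which is smaller than every other letter. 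Writing $\nu^{(k)}$ for the insertion with exactly $k$ letters of $\nu'$ to the left of the block ($0\le k\le(n-1)r$), this gives a bijection between $\mathfrak{S}^{(r)}_{n-1}\times\{0,\ldots,(n-1)r\}$ and $\mathfrak{S}^{(r)}_n$.

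For the inversion number the effect is transparent: each inserted $1$ is smaller than all $k$ letters to its left and smaller than every letter to its right, so $\mathrm{inv}(\nu^{(k)})=\mathrm{inv}(\nu')+rk$ and therefore $\mathrm{INV}(\nu^{(k)})=\mathrm{INV}(\nu')+k$. Summing $q^{\mathrm{INV}}$ over $k$ contributes the factor $\sum_{k=0}^{(n-1)r}q^k=[(n-1)r+1]$, which is exactly the desired recursion.

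The substance of the proof is the analogous claim for the major index, which I would isolate as a lemma: for each fixed $\nu'$, as $k$ runs over $\{0,\ldots,(n-1)r\}$ the increments $\mathrm{MAJ}(\nu^{(k)})-\mathrm{MAJ}(\nu')$ run over $\{0,\ldots,(n-1)r\}$, each value occurring exactly once. To prove it I would first track how the set $J$ changes. The inserted block $1^r$ is a new complete block, so for $k\ge1$ the position $k$ enters $J(\nu^{(k)})$ (since $\nu'_k>1$) and contributes $k$. Each old element of $J(\nu')$, with its descent top at position $j_t$ and its equal block at $j_t+1,\ldots,j_t+r$, is shifted right by $r$ when $k\le j_t-1$ (contributing $j_t+r$), left in place when $k\ge j_t+r$ (contributing $j_t$), and destroyed when $k$ falls in the length-$r$ interval $I_t:=\{j_t,\ldots,j_t+r-1\}$ of gaps, because then the block is split by $1$'s. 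Listing the tops as $j_1<\cdots<j_m$ with $m=|J(\nu')|$, one checks that $j_{t+1}\ge j_t+r$, so the intervals $I_t$ are pairwise disjoint.

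The increment then splits into two regimes. For $k\in I_s$ the surviving blocks give $\mathrm{MAJ}(\nu^{(k)})-\mathrm{MAJ}(\nu')=(k-j_s)+r\,\#\{t:j_t>j_s\}$, so $I_s$ realises the $r$ consecutive values $\{r(m-s),\ldots,r(m-s)+r-1\}$; letting $s$ range over the blocks fills $\{0,\ldots,rm-1\}$ bijectively. For $k$ outside all $I_t$ one gets $\mathrm{MAJ}(\nu^{(k)})-\mathrm{MAJ}(\nu')=k+r\,\#\{t:j_t>k\}$. Here two elementary counting estimates finish the job: between two such gaps $k<k'$ every block with $k<j_t\le k'$ contributes a full interval $I_t$ lying strictly between them, which forces the increment to be strictly increasing in $k$; and the intervals $I_t$ with $j_t\le k$ all lie in $\{0,\ldots,k-1\}$, which forces the increment to be at least $rm$. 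Hence the exterior gaps map bijectively onto $\{rm,\ldots,(n-1)r\}$, and together with the interior gaps we recover all of $\{0,\ldots,(n-1)r\}$, giving the factor $[(n-1)r+1]$ and closing the induction. I expect this increment bookkeeping to be the main obstacle: the naive ``insert into the $k$-th gap'' does \emph{not} send the increment to $k$ (already $k=0$ gives increment $r\,|J(\nu')|$), so the equidistribution is only visible after proving that the destroyed, shifted, and untouched contributions conspire into a genuine bijection of increments rather than a mere multiset of the right size.
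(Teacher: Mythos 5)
Your proof is correct and takes essentially the same route as the paper: the heart of both arguments is the insertion lemma (Lemma \ref{lemma:inspos}) stating that when $1^r$ is inserted into the $(n-1)r+1$ gaps of a fixed $r$-Stirling permutation on $\{2,\ldots,n\}$, the $\mathrm{MAJ}$-increments, like the $\mathrm{INV}$-increments, sweep out $\{0,1,\ldots,(n-1)r\}$ exactly once, and your interior/exterior case analysis of the increments matches the paper's formula for $Q_{i_k+s}$ (your monotonicity argument replacing the paper's pairwise-distinctness contradictions). The only difference is presentational: the paper packages the lemma into a recursively defined Foata-style bijection $\Phi$ with $\mathrm{INV}(\Phi(\nu))=\mathrm{MAJ}(\nu)$, whereas you feed it directly into the recursion $P_n(q)=[(n-1)r+1]\,P_{n-1}(q)$ for both generating functions.
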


To prove Theorem \ref{thrm:INVMAJ}, we will construct a permutation $\Phi$ satisfying 
\begin{align}
\label{def:Phi}
\mathrm{INV}(\Phi(\nu))=\mathrm{MAJ}(\nu) \qquad \text{for all } \nu\in\mathfrak{S}^{(r)}_{n},
\end{align}
and such that $\Phi(\nu)\in\mathfrak{S}^{(r)}_{n}$.
In the case of $r=1$, Foata gave an explicit construction of $\Phi$
in \cite{Foa68}. 
This motivates us to construct the permutation $\Phi$ in the case 
of general $r\ge2$.

In what follows, we give a map $\Phi:\mathfrak{S}_{n}^{(r)}\rightarrow\mathfrak{S}_{n}^{(r)}$ satisfying 
Eq. (\ref{def:Phi}) for all $\nu\in\mathfrak{S}^{(r)}_{n}$.
Let $\nu$ be an $r$-Stirling permutation in the set $[2,n]$.
To obtain an $r$-Stirling permutation in $[1,n]$, we insert $1^{r}$ somewhere in $\nu$.
Let $\nu_{i}$, $0\le i\le r(n-1)$, be an $r$-Stirling permutation obtained from $\nu$
by inserting $1^r$ from the $i$-th position in $\nu$. 
Define two numbers $P_{i}$ and $Q_{i}$ by
\begin{align*}
P_{i}&:=\mathrm{INV}(\nu_{i})-\mathrm{INV}(\nu), \\
Q_{i}&:=\mathrm{MAJ}(\nu_{i})-\mathrm{MAJ}(\nu).	
\end{align*} 

\begin{lemma}
\label{lemma:inspos}
We have 
\begin{align*}
\{P_{i} : 0\le i\le r(n-1)\}=\{Q_{i} : 0\le i\le r(n-1) \}=[0,r(n-1)].
\end{align*}
\end{lemma}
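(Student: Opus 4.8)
The plan is to treat the two equalities separately, since the inversion statement is essentially immediate while the major-index statement carries all the content. For the inversion numbers, I would use that $1$ is strictly smaller than every letter of $\nu$. Inserting the block $1^{r}$ at gap $i$ therefore creates an inversion with each of the $i$ letters lying to its left (every such letter exceeds $1$ and precedes all $r$ copies) and creates no inversion with any letter to its right, so $\mathrm{inv}(\nu_i)-\mathrm{inv}(\nu)=ir$ and hence $P_i=\tfrac1r\bigl(\mathrm{inv}(\nu_i)-\mathrm{inv}(\nu)\bigr)=i$. As $i$ runs over $0,\dots,r(n-1)$ this gives $\{P_i\}=[0,r(n-1)]$ at once.

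For the major index I would write $m:=r(n-1)$ and $D:=J(\nu)$, and first compute $\mathrm{MAJ}(\nu_i)$ by tracking the descent set. Each $j\in D$ marks a maximal run $\nu_{j+1}=\cdots=\nu_{j+r}$ preceded by a larger letter, and inserting $1^{r}$ at gap $i$ falls into one of the cases: if the run lies entirely to the left ($j\le i-r$) the descent survives unshifted at $j$; if it lies entirely to the right ($j\ge i+1$) it shifts to $j+r$; if the gap splits the run ($i-r<j<i$) the minimal letters break it and the descent is destroyed; and the boundary case $j=i$ merges with the new descent. Separately, because the letter just before the block exceeds $1$, the block always produces a fresh descent at position $i$ when $i\ge1$, and minimality of $1$ prevents any descent at the right end of the block. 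Collecting these contributions I expect
\[
\mathrm{MAJ}(\nu_i)=\sum_{\substack{j\in D\\ j\le i-r}}j+\sum_{\substack{j\in D\\ j\ge i+1}}(j+r)+i\,\delta(i\ge1),
\]
so that $Q_i=i+r\,N_{>i}-S_i$, where $N_{>i}:=|\{j\in D: j>i\}|$ and $S_i:=\sum_{j\in D,\ i-r<j\le i}j$.

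The decisive structural input is that the generalized descents of an $r$-Stirling permutation are at least $r$ apart: if $j\in D$ then positions $j+1,\dots,j+r$ carry a single repeated value, so no descent can begin strictly inside this block and the next descent is $\ge j+r$. Hence every window $(i-r,i]$ meets $D$ in at most one point, and $S_i$ is either $0$ or a single descent. I would then sort the gaps by $N_{>i}$, which equals $k-t$ on each interval $j_t\le i\le j_{t+1}-1$ (writing $D=\{j_1<\cdots<j_k\}$, $j_0:=0$, $j_{k+1}:=m+1$), and split each interval at $i=j_t+r$. On the lower part $j_t\le i\le j_t+r-1$ one gets $Q_i=(i-j_t)+r(k-t)$, which sweeps the full block $\{r(k-t),\dots,r(k-t)+r-1\}$; over $t=1,\dots,k$ these blocks tile $[0,rk-1]$. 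On the remaining parts (the interval $t=0$ and the upper parts $j_t+r\le i\le j_{t+1}-1$) one gets $Q_i=i+r(k-t)$, and the left endpoints telescope via $r(k-t)+j_{t+1}=r\bigl(k-(t+1)\bigr)+j_{t+1}+r$, so these intervals abut and tile $[rk,m]$. Together this yields $\{Q_i\}=[0,m]$.

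The hard part is the major-index bookkeeping: getting the straddling-block case analysis exactly right — in particular the coincidence at $j=i$ and the absence of a descent at the block's right boundary, both of which hinge on $1$ being globally minimal — and then verifying that the intervals produced are genuinely disjoint and exhaustive. Both steps lean on the ``$\ge r$ apart'' property of $D$; once that is available the tiling reduces to the telescoping identity above, and the bound $0\le Q_i\le m$ follows from the same spacing, since the runs attached to the descents lying to the right of gap $i$ are disjoint and occupy at most $m-i-1$ cells.
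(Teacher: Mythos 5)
Your proposal is correct, and its core is the same as the paper's: both proofs dispose of $P_i=i$ by minimality of $1$, both compute $Q_i$ explicitly in terms of the descent set $J(\nu)$ (your unified formula $Q_i=i+rN_{>i}-S_i$ specializes exactly to the paper's two cases $Q_{i_k+s}=r(m-k)+s$ for $0\le s\le r-1$ and $Q_{i_k+s}=r(m-k)+i_k+s$ for $r\le s\le i_{k+1}-i_k-1$), and both rely on the same structural fact that consecutive generalized descents of an $r$-Stirling permutation are at least $r$ apart. Where you diverge is in the concluding step: the paper proves that the $Q_i$ are pairwise distinct by a three-way case analysis, separately establishes $0\le Q_i\le r(n-1)$, and then invokes a counting/pigeonhole argument; you instead partition the index range into the intervals $[j_t,j_t+r-1]$ and $[j_t+r,j_{t+1}-1]$ on which $Q$ is affine, and verify by the telescoping identity that the images tile $[0,r(n-1)]$ exactly. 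Your finish establishes surjectivity (hence bijectivity, by counting) directly and avoids the paper's somewhat fiddly distinctness cases, at the cost of a more delicate initial bookkeeping of how each descent survives, shifts, is destroyed, or merges under the insertion. One minor imprecision: your closing remark that $rN_{>i}\le m-i-1$ fails when $N_{>i}=0$ (e.g.\ at $i=m$, where $Q_m=m$), but this bound is not needed once the tiling is in place, so it does not affect the argument.
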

\begin{proof}
Since the integer $1$ is the smallest in the set $[1,n]$, it is obvious 
that we have $P_{i}=i$ for $0\le i\le r(n-1)$. Therefore, we have 
$\{P_{i}\}=[0,r(n-1)]$.

Suppose that $\{i_1,\ldots,i_{m}\}$ is the set of descents of $\nu$.
We first show that all $Q_{i}$ are distinct.
From Eq. (\ref{eq:Jnu}), we have 
\begin{align*}
Q_{i_k+s}=
\begin{cases}
r(m-k)+s, & 0\le s\le r-1, \\
r(m-k)+i_{k}+s, & r\le s\le i_{k+1}-i_{k}-1.
\end{cases}
\end{align*}
When $0\le s,s'\le r-1$, it is obvious that we have $Q_{i_j+s}\neq Q_{i_j'+s'}$ if $j\neq j'$.
Suppose we have $Q_{i_j+s}=Q_{i_j'+s'}$ with $r\le s,s'$ and $j<j'$.
The condition $Q_{i_j+s}=Q_{i_j'+s'}$ implies that 
\begin{align*}
i_{j'}-i_{j}&=r(j'-j)+s-s', \\
&\le r(j'-j)+i_{j+1}-i_{j}-1-r.
\end{align*}
The above inequality is equivalent to
\begin{align*}
i_{j'}-i_{j}\le r(j'-j)-1,
\end{align*}
for $j<j'$. However, the condition (\ref{eq:Jnu}) insures that 
we have $i_{j'}-i_{j}\ge r(j'-j)$, which gives a contradiction.
Therefore, we have $Q_{i_j+s}\neq Q_{i_j'+s'}$ with $r\le s,s'$ and $j<j'$.
Finally, suppose that $Q_{i_j+s}=Q_{i_j'+s'}$ with $0\le s\le r-1$ and 
$r\le s'$. The equation $Q_{i_j+s}=Q_{i_j'+s'}$ implies that 
\begin{align*}
i_{j'}&=r(j'-j)+s-s', \\
&\le r(j'-j)+r-1-r, \\
&=r(j'-j)-1.
\end{align*}
The condition (\ref{eq:Jnu}) again implies $i_{j'}\le r(j'-1)+1$. These two 
conditions imply that $r(j-1)\le -2$, which is a contradiction since we have 
$j\ge1$. Therefore, we have $Q_{i_j+s}\neq Q_{i_j'+s'}$ with $0\le s\le r-1$ and 
$r\le s'$.
From these, all $Q_{i}$ are distinct. 
It is easy to see that $Q_{i}\ge0$.
We prove that $Q_{i}\le r(n-1)$.
The condition (\ref{eq:Jnu}) implies that the number of descents in $\nu$ 
is at most $n-1$. 
If $Q_{i_k+s}\ge r(n-1)+1$ for $0\le s\le r-1$, we have 
\begin{align*}
r(n-1)+1&\le Q_{i_k+s}=r(m-k)+s , \\
&<r(m-k)+r.
\end{align*}
This inequality is equivalent to $n-1\le m-k$.
However, we have $m-k\le n-1-1=n-2$, which is a contradiction.
Therefore, we have $Q_{i}\le r(n-1)$.
Recall that $0\le i\le r(n-1)$, $0\le Q_{i}\le r(n-1)$, and 
all $Q_{i}$ are distinct. From these, we have 
$\{Q_{i}: 0\le i\le r(n-1)\}=[0,r(n-1)]$, which completes the proof.
\end{proof}

We define a map $\Phi$ recursively as follows.
Let $\nu'$ be an $r$-Stirling permutation of the set $[2,n]$, and 
$\Phi(\nu')$ be an $r$-Stirling permutation of the set $[2,n]$
satisfying the condition $\mathrm{INV}(\Phi(\nu'))=\mathrm{MAJ}(\nu')$.
Let $\nu$ be an $r$-Stirling permutation of the set $[1,n]$ obtained from 
$\nu'$ by inserting $1^r$.
We insert $1^r$ in $\Phi(\nu')$ and obtain $\widetilde{\nu}$. 
From Lemma \ref{lemma:inspos}, we have a unique position such that 
$\mathrm{MAJ}(\nu)=\mathrm{INV}(\widetilde{\nu})$.
Then, the map $\Phi$ on $\nu$ is defined by $\Phi:\nu\mapsto\widetilde{\nu}$.

\begin{example}
Let $\nu=2255431134$. Then, we have 
\begin{align}
\label{eq:exMAJINV}
\begin{aligned}
\mathrm{maj}:55\rightarrow 5544 &\rightarrow 554334 &\rightarrow 22554334 &\rightarrow 2255431134, \\
\mathrm{inv}:55\rightarrow 5544 &\rightarrow 533544 &\rightarrow53223544 &\rightarrow 5113223544.
\end{aligned}
\end{align}
The top line in Eq. (\ref{eq:exMAJINV}) is $2$-permutations of the set $[i,5]$ for $i=5,4,\ldots1$.
The bottom line in Eq. (\ref{eq:exMAJINV}) is the corresponding $2$-permutations.
As a result, we have $\Phi:2255431134\mapsto 5113223544$
with $\mathrm{MAJ}(2255431134)=\mathrm{INV}(5113223544)=6$.
\end{example}

\begin{proof}[Proof of Theorem \ref{thrm:INVMAJ}]
A map $\Phi:\mathfrak{S}_{n}^{(r)}\rightarrow\mathfrak{S}_{n}^{(r)}$ is bijective 
since we have an inverse $\Phi^{-1}$ by reversing the procedure.
Further, by definition, $\Phi$ satisfies Eq. (\ref{def:Phi}). 
As a consequence, we have
\begin{align*}
\sum_{\nu\in\mathfrak{S}^{(r)}_{n}}q^{\mathrm{MAJ}(\nu)}&=\sum_{\nu\in\mathfrak{S}^{(r)}_{n}}q^{\mathrm{INV}(\Phi(\nu))}
=\sum_{\nu\in\mathfrak{S}^{(r)}_{n}}q^{\mathrm{INV}(\nu)}, \\
&=\prod_{j=1}^{n-1}[jr+1].
\end{align*}
\end{proof}

For the $231$-avoiding $r$-Stirling permutations, we have the following proposition.
\begin{prop}
\label{prop:231Crn}
We have
\begin{align}
\label{eq:231Crn}
\sum_{\nu\in\mathfrak{S}^{(r)}_{n}(231)}q^{\mathrm{INV}(\nu)}=q^{r\genfrac{(}{)}{0pt}{}{n}{2}}C_{n}^{(r)}(q^{-1}).
\end{align}
\end{prop}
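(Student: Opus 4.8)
The plan is to connect the inversion number $\mathrm{INV}$ on $231$-avoiding $r$-Stirling permutations directly to the Young-diagram area statistic on $r$-Dyck paths, using the bijection $\kappa:\mathfrak{S}^{(r)}_{n}(231)\rightarrow\mathcal{P}^{r}_{n}$ established earlier in the excerpt. Recall that $\kappa$ is built from the sequence $\alpha(\pi)=(\alpha_1,\ldots,\alpha_n)$ counting, for each $i$, the number of entries $j>i$ lying to the left of the leftmost $i$ in $\pi$, and that $\alpha_i$ equals the number of unit boxes placed in the $i$-th row above the lowest path $(01^r)^n$. The key observation I would establish is that $\mathrm{INV}(\pi)=\tfrac{1}{r}\mathrm{inv}(\pi)$ is precisely the total box count $\sum_i \alpha_i$ that measures the area between $\kappa(\pi)$ and the \emph{lowest} path $p_0$; equivalently, in the notation of the earlier subsection, this is the complementary statistic $\mathrm{Inv}(\kappa(\pi))=\tfrac{r}{2}n(n-1)-|Y(\kappa(\pi))|$, where $Y(\mu)$ is the region between $\mu$ and the \emph{top} path.

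First I would verify the area identity $\mathrm{INV}(\pi)=\sum_{i=1}^{n}\alpha_i$ for $\pi\in\mathfrak{S}^{(r)}_{n}(231)$. The quantity $\mathrm{inv}(\pi)$ counts pairs of positions $(j,k)$ with $j<k$ and $\pi_j>\pi_k$. Because each value appears with multiplicity $r$ and because the $231$-avoidance together with the $r$-Stirling condition forces all $r$ copies of a larger value to cluster in a controlled way relative to the leftmost copy of a smaller value, I expect that each unit of $\alpha_i$ contributes exactly $r$ inversions to $\mathrm{inv}(\pi)$, so that $\mathrm{inv}(\pi)=r\sum_i\alpha_i$ and hence $\mathrm{INV}(\pi)=\sum_i\alpha_i$. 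This is the step I anticipate being the main obstacle: one must carefully count how many inversions in $\pi$ correspond to a single box in row $i$, checking that the $r$-fold multiplicity of each letter interacts cleanly with the definition of $\alpha_i$ and that no inversions are double-counted or omitted, using the $231$-avoidance to rule out the problematic interleavings.

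Next I would translate the area statement into generating functions. Since $\sum_i\alpha_i$ is the number of boxes between $\kappa(\pi)$ and the lowest path, and $|Y(\mu)|=\tfrac{r}{2}n(n-1)-\sum_i\alpha_i$ is the complementary number of boxes below the top path, we get $\mathrm{INV}(\pi)=\tfrac{r}{2}n(n-1)-|Y(\kappa(\pi))|=\mathrm{Inv}(\kappa(\pi))$. Summing over the bijection and applying the definition $\mathrm{iInv}(\mu)=|Y(\mu)|$ together with Proposition stating $\sum_{\mu\in\mathcal{P}^r_n}q^{\mathrm{iInv}(\mu)}=C^{(r)}_n(q)$, I compute
\begin{align*}
\sum_{\pi\in\mathfrak{S}^{(r)}_{n}(231)}q^{\mathrm{INV}(\pi)}
=\sum_{\mu\in\mathcal{P}^{r}_{n}}q^{\mathrm{Inv}(\mu)}
=\sum_{\mu\in\mathcal{P}^{r}_{n}}q^{\,\tfrac{r}{2}n(n-1)-\mathrm{iInv}(\mu)}
=q^{r\binom{n}{2}}\sum_{\mu\in\mathcal{P}^{r}_{n}}q^{-\mathrm{iInv}(\mu)}
=q^{r\binom{n}{2}}C_{n}^{(r)}(q^{-1}),
\end{align*}
which is exactly Eq.~(\ref{eq:231Crn}). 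The only nontrivial input beyond the earlier propositions is the box-counting identity in the second paragraph; once that is secured, the remainder is a direct substitution and the $q\mapsto q^{-1}$ reflection that swaps the area-from-below statistic for the area-from-above statistic.
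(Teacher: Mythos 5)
Your proposal is correct and follows essentially the same route as the paper: the paper's proof also identifies $\mathrm{INV}(\nu)$ with $\sum_i h_i$ where $h_i=\frac{1}{r}\#\{j>i:\ j \text{ left of } i\}$ coincides with your $\alpha_i$, interprets this sum as the area between $\kappa(\nu)$ and the lowest path $(01^r)^n$, and invokes the generating function $\sum_\mu q^{\mathrm{Inv}(\mu)}=q^{r\binom{n}{2}}C^{(r)}_n(q^{-1})$ from the earlier proposition. The only remark worth adding is that the ``each box contributes $r$ inversions'' step you flag as the main obstacle follows already from the $r$-Stirling condition (no letter $j>i$ can lie between two copies of $i$), so $231$-avoidance is needed only for $\kappa$ to be well defined, not for that count.
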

\begin{proof}
Given $\nu\in\mathfrak{S}^{(r)}_{n}(231)$, we define a sequence $\mathbf{h}(\nu):=(h_1,\ldots,h_{n})$ 
of non-negative integers by
\begin{align*}
h_i:=\genfrac{}{}{}{}{1}{r}\cdot\#\{j>i : \text{ $j$ is left to $i$ in $\nu$}\}.
\end{align*}
It is easy to see that $h_{n}=0$ and $0\le h_{i}\le r(n-i)$.
Since $\nu$ is $231$-avoiding, we have $h_{i}<h_{i+1}+r+1$.
Note that $\sum_{i}h_i=\mathrm{INV}(\nu)$.
Let $\kappa(\nu)$ be an $r$-Dyck path obtained from $\nu$ as in Section \ref{sec:bij231}.
Then, the integer $h_{i}$ is the number of unit boxes in the $i$-th row from top.
This implies that $\sum_{i}h_{i}$ is equal to the number of unit boxes above 
$(01^r)^n$ and below $\kappa(\nu)$.
As a consequence, we have Eq. (\ref{eq:231Crn}).
\end{proof}

\section{Non-crossing partitions and \texorpdfstring{$q$}{q}-Catalan number}
\label{sec:NC}
\subsection{Non-crossing partitions and major index}
A {\it non-crossing partition} of the set $[n]:=\{1,2,\ldots,n\}$ is 
a partition $\pi$ of $[n]$ such that if four integers satisfy $a<b<c<d$,
a block $B_1$ contains $a$ and $c$, and another block $B_2$ contains $b$ 
and $d$, then the two blocks $B_1$ and $B_2$ coincide with each other. 
We denote by $\mathtt{NC}(n)$ the set of non-crossing partitions of $[n]$.
It is well-known that $|\mathtt{NC}(n)|$ is given by the Catalan number.

Suppose that $\pi$ consists of $m$ blocks $B_1,\ldots,B_{m}$, and 
each block $B$ consists of ordered integers $(j_{1},\ldots,j_{p})$
where $p$ is the number of integers in $B_{i}$.

G.~Kreweras initiated a systematical study of the non-crossing partition lattice, which we call 
Kreweras lattice, in \cite{Kre72}.
The Kreweras lattice is a graded lattice with the rank function $\mathtt{rk}$.
We define the rank of $\pi$ as $\mathtt{rk}(\pi):=n-m$ where $m$ is the number of blocks in $\pi$.
When a block $B$ consists of integers $i_1,\ldots,i_m$, we write $i_1\ldots i_{m}$.
Further, since a non-crossing partition $\pi$ consists of $m$ blocks, we write 
$\pi=B_1/B_2/\ldots/B_{m}$ as a one-line notation. 
Since $\pi$ is a collection of blocks, the order of blocks in the one-line notation 
is irrelevant.
A non-crossing partition $\pi\in\mathtt{NC}(c)$ has a graphical presentation such that it has $n$ 
points on a circle, the integers in a block $B_{i}$ are connected by arches.
The non-crossing property implies that arches in different blocks do not intersect.

\begin{example}
We write $\pi=134/2/58/67$ since $\pi$ consists of four blocks 
$B_1=(1,3,4)$, $B_2=(2)$, $B_3=(5,8)$ and $B_{4}=(6,7)$.
The rank of $\pi$ is $\mathtt{rk}(\pi)=8-4=4$.
A graphical presentation of $\pi$ is given in Figure \ref{fig:NC}.
\begin{figure}[ht]
\begin{tikzpicture}[scale=0.6]
\draw circle(3cm);
\foreach \a in {0,45,90,...,315}
\filldraw [black](\a:3cm)circle(1.5pt);
\draw(90:3cm)node[anchor=south]{$1$};
\draw(45:3cm)node[anchor=south west]{$2$};
\draw(0:3cm)node[anchor=west]{$3$};
\draw(-45:3cm)node[anchor=north west]{$4$};
\draw(-90:3cm)node[anchor=north]{$5$};
\draw(135:3cm)node[anchor=south east]{$8$};
\draw(180:3cm)node[anchor=east]{$7$};
\draw(225:3cm)node[anchor=north east]{$6$};
\draw(90:3cm)to [bend right=30](0:3cm)to[bend right=30](-45:3cm)to[bend left=20](90:3cm);
\draw(-90:3cm)to[bend right=30](135:3cm);
\draw(-135:3cm)to[bend right=30](-180:3cm); 
\end{tikzpicture}
\caption{A graphical presentation of $\pi=134/2/58/67$.}
\label{fig:NC}
\end{figure}
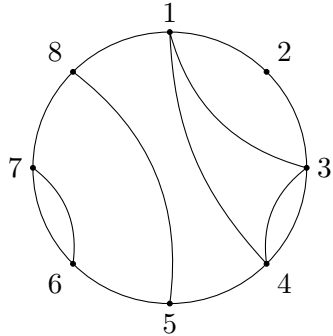
Integers in a block are connected by arches.
\end{example}

The next theorem is one of the main results of this paper.
We write $B\in\pi$ if $B$ is a block of $\pi$, and $B=(j_1,\ldots,j_{p})$ if $B$ consists 
of integers $i_1,\ldots,i_{p}$.
 
\begin{theorem}
\label{thrm:NCtoCat}
We have 
\begin{align*}
\sum_{\pi\in\mathtt{NC}(n)}
\prod_{B\in\pi}q^{\mathrm{wt}(B)}=\genfrac{}{}{}{}{1}{[n+1]}\genfrac{[}{]}{0pt}{}{2n}{n},
\end{align*}
where
\begin{align}
\label{eq:wtBi}
\mathrm{wt}(B):=
\begin{cases}
0, & \text{ if } p=1, \\
j_{1}+j_{p}+2\sum_{k=2}^{p-1}j_{k}-p+1, & \text{otherwise}
\end{cases}
\end{align}
for $B=(j_1,\ldots,j_{p})$.
\end{theorem}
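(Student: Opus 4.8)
The plan is to prove that the generating function $\sum_{\pi\in\mathtt{NC}(n)}\prod_{B\in\pi}q^{\mathrm{wt}(B)}$ equals $E_{n}(q)=\frac{1}{[n+1]}\genfrac{[}{]}{0pt}{}{2n}{n}$ by exhibiting a \emph{statistic-preserving bijection} from $\mathtt{NC}(n)$ to Dyck paths $\mathcal{P}^{1}_{n}$ under which $\sum_{B\in\pi}\mathrm{wt}(B)$ becomes the major index $\mathrm{maj}(d)$. Once such a bijection is in hand, the target identity follows immediately from the already-established Proposition on $\sum_{d\in\mathcal{P}^{1}_{n}}q^{\mathrm{maj}(d)}=E_{n}(q)$. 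The natural route, flagged by the paper's introduction, is to factor the correspondence through $231$-avoiding permutations: first map $\pi\in\mathtt{NC}(n)$ to a permutation $\omega\in\mathfrak{S}_{n}(231)$ via the standard Kreweras-type encoding (reading each block in increasing order and concatenating blocks appropriately), then apply the bijection $\kappa$ (or its $r=1$ specialization to permutations) between $231$-avoiding permutations and Dyck paths. The crux of the argument is then a purely local bookkeeping claim: under the composite map, the weight $\mathrm{wt}(B)$ of each block accounts precisely for the contribution of a corresponding cluster of descents in the Dyck path word.

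First I would set up the encoding of a non-crossing partition as a permutation. Given $\pi=B_1/\cdots/B_m$ with each block written in increasing order $B=(j_1,\ldots,j_p)$, the non-crossing condition guarantees that the blocks can be arranged into a permutation word in a canonical (e.g.\ nesting-respecting) order so that the resulting $\omega$ is $231$-avoiding; I would verify this avoidance directly from the defining crossing condition $a<b<c<d$. Then I would translate the block weight. The formula
\begin{align*}
\mathrm{wt}(B)=j_1+j_p+2\sum_{k=2}^{p-1}j_k-p+1
=\sum_{k=1}^{p}j_k+\sum_{k=2}^{p-1}j_k-(p-1)
\end{align*}
should be rewritten as a telescoping sum over the consecutive gaps $j_{k+1}-j_k$ within the block, or equivalently as a sum of the intermediate positions, so that each term can be matched to the position of a descent contributed to $\mathrm{maj}(d)$ by the arches of $B$. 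The plan is to show that a block on the integers $j_1<\cdots<j_p$ creates exactly the descents located at positions determined by these $j_k$, and that the $-p+1$ correction and the doubling of interior elements $j_k$ exactly reconcile the difference between summing positions in the permutation and summing descent positions in the Dyck word.

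The key steps in order are: (1) define the block-reading map $\pi\mapsto\omega$ and prove $\omega\in\mathfrak{S}_{n}(231)$; (2) compose with $\kappa$ to obtain a Dyck path $d=d(\pi)$ and confirm bijectivity onto $\mathcal{P}^1_n$ by a cardinality/invertibility check, since all three sets are counted by the Catalan number and each arrow is invertible; (3) prove the weight identity $\sum_{B\in\pi}\mathrm{wt}(B)=\mathrm{maj}(d(\pi))$ by a block-by-block analysis, expressing $\mathrm{wt}(B)$ via the telescoping rewrite above and matching it to the descent positions of $d$; (4) conclude via Proposition \ref{prop:majE} at $r=1$. I expect step (3) to be the main obstacle: the descent set of $d$ does not split as a disjoint union over blocks in an obvious way, because consecutive or nested blocks interact in the Dyck word, so the matching of $\mathrm{wt}(B)$ to descent positions requires a careful argument about how the arches of each block sit in the path. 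The cleanest way to control this is probably induction on the block structure of $\pi$ (peeling off the block containing $n$, or the outermost block), tracking how insertion of a block shifts descent positions by exactly the amount encoded in $\mathrm{wt}(B)$; the telescoping form of $\mathrm{wt}(B)$ is designed precisely to make this shift bookkeeping close.
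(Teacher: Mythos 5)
Your overall reduction --- transport $\sum_{B\in\pi}\mathrm{wt}(B)$ to the major index of a Dyck path and invoke $\sum_{d\in\mathcal{P}^{1}_{n}}q^{\mathrm{maj}(d)}=\genfrac{}{}{}{}{1}{[n+1]}\genfrac{[}{]}{0pt}{}{2n}{n}$ --- is the right idea, but your step (3) is the entire content of the theorem and you have not supplied it; you explicitly defer it as ``the main obstacle'' with only a gesture toward induction on blocks, so as written this is a plan rather than a proof. Moreover, the specific composite you propose points at the wrong statistic: the bijection $\kappa$ of Section \ref{sec:bij231} is built from the inversion table $\alpha(\pi)$ and transports $\mathrm{inv}$ to the \emph{area} between the path and the lowest path, i.e.\ to the Carlitz $q$-Catalan number $C_{n}(q)$, not to $E_{n}(q)$. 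There is no reason the major index of $\kappa(\omega)$ should see the Kreweras block weights; to reach $\mathrm{maj}$ you would need the descent-set bijection $\rho$ of Definition \ref{defn:rho}, or the peak/valley encoding via $\min B$ and $\max B$ used later in Section \ref{sec:NC}, and the block-by-block matching of $\mathrm{wt}(B)$ to descent positions is exactly the nontrivial bookkeeping you have left open. A warning that such a clean single-path identity cannot be taken for granted: the natural encoding $d(\pi)$ built from $S_{X}(\pi),S_{Y}(\pi)$ satisfies $\mathrm{maj}(d(\pi))=22$ but $\mathrm{wt}(\pi)=34$ for $\pi=158/24/3/67$, so that candidate map fails, and the paper only obtains $\mathrm{wt}(\pi)=\mathrm{maj}(\sigma)+\mathrm{maj}(\sigma^{-1})$ as a \emph{sum} of two major indices (Theorem \ref{thrm:NCqt}).

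The paper's actual proof avoids a global descent-set decomposition altogether. It refines both sides by rank, setting $N_{2}(n,r)=\sum_{\pi\in\mathtt{NC}(n;r)}\prod_{B\in\pi}q^{\mathrm{wt}(B)}$, derives a recurrence for $N_{2}(n,r)$ by splitting off the block containing $1$, derives the parallel recurrence for the $q$-Narayana numbers $N(n,r)$ from the prime-path decomposition of a Dyck path, and then proves $N_{2}(n,r)=N(n,r)$ by an inductive bijection with explicit local path surgeries (replacing $110^{k}1$ by $10^{k}11$ the appropriate number of times) to absorb the discrepancy $q^{-j+s}$ between the two recurrences; summing over $r$ gives the theorem. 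If you want to complete your route, the realistic options are either to reproduce this rank-refined recurrence comparison, or to prove the closed form $\mathrm{wt}(\pi)=n(n+1)-\sum_{i}(\min B_{i}+\max B_{i})-\mathtt{rk}(\pi)$ and match it against a pair of descent statistics as in Theorem \ref{thrm:NCqt}; either way the missing computation must actually be carried out.
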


The rest of this section is devoted to the proof of Theorem \ref{thrm:NCtoCat}.

We denote by $\mathtt{NC}(n;r)$ the set of non-crossing partitions $\pi$ of $n$ such 
that $\mathtt{rk}(\pi)=r$.
Let $N_2(n,r)$ be the generating function 
\begin{align*}
N_2(n,r):=\sum_{\pi\in\mathtt{NC}(n;r)}\prod_{B\in\pi}q^{\mathrm{wt}(B)}.
\end{align*}

\begin{lemma}
The generating function $N_2(n,r)$ satisfies the following recurrence relation:
\begin{align}
\label{eq:recN2}
N_2(n,r)
=q^{2r}N_2(n-1,r)+\sum_{j=0}^{n-2}\sum_{s=0}^{j-1}
N_{2}(j,s)N_2(n-j-1,r-s-1)q^{2(j+1)(r-s)-j+2s}.
\end{align}
\end{lemma}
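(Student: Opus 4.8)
The statement to be proved is the recurrence relation (\ref{eq:recN2}) for the refined generating function $N_2(n,r)$. The natural strategy is to decompose a non-crossing partition $\pi\in\mathtt{NC}(n;r)$ according to the block containing the element $n$, or more precisely, according to the structure imposed by the position of the last point. I would condition on whether the singleton $\{n\}$ is a block of $\pi$ (this will produce the first term $q^{2r}N_2(n-1,r)$), or whether $n$ lies in a block of size at least two (producing the double sum). The key fact I rely on is that non-crossing partitions admit a recursive ``first-return'' decomposition: if we single out the element whose block structure splits the interval $[1,n]$, the partition factors into two independent non-crossing partitions on consecutive subintervals. The weight function $\mathrm{wt}(B)$ from (\ref{eq:wtBi}) must be tracked carefully through this splitting, since it depends on the \emph{positions} $j_1,\ldots,j_p$ of the elements in each block, and these positions shift when we relabel the subintervals.

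\textbf{The singleton term.} First I would handle the case where $\{n\}$ is a singleton block. Removing it gives a partition $\pi'\in\mathtt{NC}(n-1;r)$ (the rank is unchanged since we drop one element and one block). Because $\mathrm{wt}(\{n\})=0$ by (\ref{eq:wtBi}), one might naively expect no weight contribution; but the factor $q^{2r}$ must come from a shift in the weights of the \emph{other} blocks. The crucial observation is that $\mathrm{wt}(B)$ is a linear function of the positions $j_k$ with coefficient summing to $2(p-1)$ for a block of size $p$ (namely $j_1+j_p+2\sum_{k=2}^{p-1}j_k$ has coefficient-sum $2+2(p-2)=2(p-1)$). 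Summing $2(p-1)$ over all blocks of $\pi'$ gives $2\sum_B(|B|-1)=2\,\mathtt{rk}(\pi')=2r$. So inserting an extra point at a given location and shifting positions accordingly yields exactly the prefactor $q^{2r}$. I would verify this bookkeeping precisely.

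\textbf{The main decomposition.} The harder term is the double sum. Here $n$ lies in a non-singleton block, and I would let $j+1$ be the smallest element of $[1,n]$ that lies in the same block as $n$, so that the elements $\{1,\ldots,j\}$ form an independent non-crossing partition $\pi_1\in\mathtt{NC}(j;s)$ ``below'' this arch, while the elements $\{j+1,\ldots,n\}$ carry the remaining structure $\pi_2\in\mathtt{NC}(n-j-1;r-s-1)$ after relabeling (the $-1$ in the rank accounts for the block joining the two ends). The main obstacle, and the step I expect to require the most care, is computing the exact exponent $2(j+1)(r-s)-j+2s$ of $q$. This exponent records two things simultaneously: the shift in positions of every element of $\pi_2$ by $j+1$ (each element is relabeled, changing its contribution to $\mathrm{wt}$), and the direct weight contribution of the new long block spanning from the subinterval boundary. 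I would expand $\mathrm{wt}(B)$ for the spanning block, collect the position shifts using the same linear coefficient-sum argument as above (the total position-shift of $\pi_2$ contributes $2(j+1)\cdot\mathtt{rk}(\pi_2)=2(j+1)(r-s-1)$, and the spanning block contributes the remaining $2(j+1)+\cdots$ terms), and reconcile all cross terms to arrive at the claimed exponent. Matching this algebra exactly against $2(j+1)(r-s)-j+2s$ is the delicate combinatorial heart of the lemma; everything else is a routine summation over the decomposition.
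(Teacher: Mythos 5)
Your decomposition is anchored at the wrong end of $[n]$, and the weight $\mathrm{wt}$ of Eq.~(\ref{eq:wtBi}) is not symmetric under reversal of $[n]$, so neither term comes out as claimed. Take the singleton term first: if $\{n\}$ is a singleton block, deleting it leaves the positions of all other elements unchanged, so that case contributes exactly $N_2(n-1,r)$ with no prefactor. Your coefficient-sum argument (which is correct in itself) produces $q^{2r}$ only when the deleted point sits at position $1$, forcing every remaining label to drop by one; summing the coefficient total $2(|B|-1)$ over blocks then gives $2\,\mathtt{rk}=2r$. That is what the paper does: it conditions on the block containing $1$, not the block containing $n$, and deletes the singleton $\{1\}$.

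For the double sum the obstruction is sharper. Write $B=(j_1,\ldots,j_p)$ for the block containing $n$, so $j_p=n$ and $j_1=j+1$ in your notation. To land on a partition of $n-j-1$ points you must delete one element of $B$; whichever endpoint you delete, the element adjacent to it is promoted from an interior position (coefficient $2$ in $\mathrm{wt}$) to an endpoint (coefficient $1$), so $\mathrm{wt}(B)$ changes by $j_1+j_2-1$ if you delete $j_1$, or by $j_{p-1}+n-1$ if you delete $n$. In your setup neither $j_2$ nor $j_{p-1}$ is determined by the decomposition data $(j,s)$, so the exponent of $q$ is not constant over the fiber and the sum does not factor as $N_2(j,s)N_2(n-j-1,r-s-1)$ times a single monomial. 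The paper avoids this by cutting at the block containing $1$ and parametrizing by the \emph{second-smallest} element $j$ of that block: deleting $1$ changes the weight by $1+j-1=j$, which is exactly the decomposition parameter, and the two relabeling shifts contribute $2s$ and $2(j-1)(r-s-1)$, assembling to the stated exponent. If you insist on working at the right end, note that the reversal $i\mapsto n+1-i$ sends $\mathrm{wt}(\pi)$ to $2rn-\mathrm{wt}(\pi)$, so your decomposition would yield a recurrence for a reciprocal of $N_2$, not for (\ref{eq:recN2}) itself.
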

\begin{proof}
Let $\pi\in\mathtt{NC}(n;r)$. We divide $\pi$ into two non-crossing partitions 
$\pi_{1}$ and $\pi_2$ where $\pi_1$ is a unique block in $\pi$ which contains $1$, and 
$\pi_2$ consists of the remaining blocks.
For example, if $\pi=145/23/68/7$, we have $\pi_1=145$ and $\pi_2=23/68/7$.

Suppose that $\pi_1$ consists of only the integer $1$.
We have a non-crossing partition in $\mathtt{NC}(n-1;r)$ if we decrease the integers 
of $\pi_2$ by one.
From Eq. (\ref{eq:wtBi}), this gives $q^{2r}N_2(n-1,r)$. 

Suppose that $|\pi_{1}|\ge2$ and $j\ge2$ is the second smallest integer in $\pi_1$.
By definition, $1$ and $j$ belong to the same block $\pi_1$.
This implies that $\pi$ consists of two non-crossing partitions: 
one is in $[2,j-1]$ and the other is in $\{1\}\cup[j,n]$. In the latter non-crossing 
partition, $1$ and $j$ belong to the same block.
The non-crossing partition in $[2,j-1]$ gives $q^{2s}N_2(j-2,s)$, and 
the non-crossing partition in $[j,n]$ gives $q^{2(j-1)(r-s-1)}N_2(n-j+1,r-s-1)$.
Further, $1$ and $j$ gives the factor $q^{j}$ by Eq. (\ref{eq:wtBi}).
By taking the product of all these three contributions, we have 
$N(j-2,s)N(n-j+1,r-s-1)q^{2(j-1)(r-s)-j+2+2s}$.

By taking the sum over all non-crossing partitions of rank $r$, we obtain 
Eq. (\ref{eq:recN2}).
\end{proof}

We introduce the $q$-analogue of Narayana numbers $N(n,r)$ by 
\begin{align}
\label{def:qNara}
N(n,r):=\genfrac{}{}{}{}{1}{[n]}\genfrac{[}{]}{0pt}{}{n}{r}\genfrac{[}{]}{0pt}{}{n}{r+1}q^{r(r+1)}.
\end{align}

Recall that a Dyck path $d$ is expressed as a word of $0$ and $1$. 
We say that a partial path $10$ in $d$ is a valley.
We denote by $\mathcal{P}_{n,r}$ the set Dyck paths of length $n$ with $r$ valleys.

Then, it is well-known that $q$-Narayana numbers are generating functions of Dyck paths of 
length $n$ with $r$ valleys.
This implies that we have 
\begin{align*}
\sum_{r=0}^{n-1}N(n,r)=\genfrac{}{}{}{}{1}{[n+1]}\genfrac{[}{]}{0pt}{}{2n}{n}.
\end{align*}

\begin{lemma}[{\cite[Eq. (4.1)]{FurHof85}}]
Let $d\in\{0,1\}^{n}$ be a Dyck path in $\mathcal{P}_{n,r}$.
Then, we have 
\begin{align*}
\sum_{d\in\mathcal{P}_{n,r}}q^{\mathrm{maj}(d)}=N(n,r).
\end{align*}
\end{lemma}

\begin{lemma}
The $q$-Narayana numbers satisfy the following recurrence relation:
\begin{align}
\label{eq:recN}
N(n,r)=q^{r}N(n-1,r)
+\sum_{j=0}^{n-2}\sum_{s=0}^{j-1}N(j,s)N(n-1-j,r-s-1)q^{2(j+1)(r-s)+s}.
\end{align}
\end{lemma}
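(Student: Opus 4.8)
The plan is to prove Eq.~(\ref{eq:recN}) combinatorially, using the interpretation $N(n,r)=\sum_{d\in\mathcal{P}_{n,r}}q^{\mathrm{maj}(d)}$ supplied by the preceding lemma. The first observation is that for a Dyck path written as a word $d\in\{0,1\}^{\ast}$ a descent occurs exactly at a valley (a factor $10$), so $\mathrm{maj}(d)$ is precisely the sum of the positions of the valleys of $d$. I would then fix the first-return decomposition $d=0\,d_1\,1\,d_2$, where the leading $0$ is the first up step, the distinguished $1$ is the step at which the path first returns to the axis, $d_1\in\mathcal{P}^{1}_{j}$ is the (possibly empty) excursion inside the first arch, and $d_2\in\mathcal{P}^{1}_{n-1-j}$ is the (possibly empty) remaining path. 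With this labelling the letters of $d_1$ occupy the global positions $2,\ldots,2j+1$, the distinguished $1$ sits at position $2j+2$, and the letters of $d_2$ occupy positions $2j+3,\ldots,2n$.

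The key step is to track how the valleys and their positions behave, splitting into two cases according to whether $d_2$ is empty. When $d_2$ is empty (so $j=n-1$ and $d=0\,d_1\,1$), the boundary factors at the two ends are $0\,0$ or $0\,1$ at the start and $1\,1$ or $0\,1$ at the end, so no new valley is created; the valleys of $d$ are exactly those of $d_1$, each shifted one place to the right. If $d_1$ has $r$ valleys this gives $\mathrm{maj}(d)=\mathrm{maj}(d_1)+r$ with $d_1\in\mathcal{P}_{n-1,r}$, so this case contributes $q^{r}N(n-1,r)$, the first term of the recurrence. When $d_2$ is nonempty, the distinguished $1$ is immediately followed by the initial $0$ of $d_2$, producing exactly one new valley, at position $2j+2$; writing $s$ for the number of valleys of $d_1$ and $t=r-s-1$ for the number of valleys of $d_2$, the valleys of $d_1$ shift by $+1$, those of $d_2$ shift by $+(2j+2)$, and adding the boundary valley yields
\begin{align*}
\mathrm{maj}(d)&=\bigl(\mathrm{maj}(d_1)+s\bigr)+(2j+2)+\bigl(\mathrm{maj}(d_2)+(2j+2)t\bigr)\\
&=\mathrm{maj}(d_1)+\mathrm{maj}(d_2)+s+2(j+1)(r-s),
\end{align*}
using $1+t=r-s$. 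Summing $q^{\mathrm{maj}(d)}$ over $d_1\in\mathcal{P}_{j,s}$ and $d_2\in\mathcal{P}_{n-1-j,\,r-s-1}$, and then over the decomposition parameters, reproduces the double sum $\sum_{j}\sum_{s}N(j,s)N(n-1-j,r-s-1)q^{2(j+1)(r-s)+s}$.

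Adding the two cases gives Eq.~(\ref{eq:recN}). I expect the only delicate point to be the indexing at the boundary: one must adopt the conventions $N(0,0)=1$ and $N(m,s)=0$ for $s<0$ or $s>m-1$, and read the inner sum so that the term $s=0$ is included when $j=0$ (the empty $d_1$, where the single boundary valley alone accounts for the rank shift and produces the term $q^{2r}N(n-1,r-1)$). A quick check at $n=2,3$ confirms both these conventions and the exponent $2(j+1)(r-s)+s$. The main obstacle is thus the position bookkeeping in the nonempty case together with the careful treatment of empty sub-paths; everything else is routine. As an alternative, one could verify Eq.~(\ref{eq:recN}) purely algebraically from the closed form (\ref{def:qNara}) by repeated use of the $q$-Pascal relations for $\genfrac{[}{]}{0pt}{}{n}{r}$, but the combinatorial argument above is shorter and matches the decomposition style already used for $N_2(n,r)$.
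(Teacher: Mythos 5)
Your proof is correct and follows essentially the same route as the paper: the first-return (prime) decomposition $d=0\,d_1\,1\,d_2$, with your two cases ($d_2$ empty versus nonempty) matching the paper's $j=n$ versus $1\le j\le n-1$ cases, and the same valley-position bookkeeping producing the exponent $2(j+1)(r-s)+s$. Your remark about the boundary is apt --- as literally written the inner sum $\sum_{s=0}^{j-1}$ is empty at $j=0$, so the convention $N(0,0)=1$ with the $s=0$ term included there is genuinely needed for the recurrence to hold (a point the paper's own proof glosses over).
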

\begin{proof}
A Dyck path $D$ of length $n$ is uniquely written as a concatenation of 
a prime Dyck path of length of $j$ and a Dyck path of length $n-j$.
We calculate the major index of $D$.

In the case of $j=n$, we have a Dyck path $d'$ of size $n-1$ from a prime Dyck 
path $d$ of size $n$ by deleting the first and last steps.
This implies that $\mathrm{maj}(d)=\mathrm{maj}(d')+r$ where $r$ is the number of 
valleys in $d$.
This gives the term $q^{r}N(n-1,r)$ in Eq. (\ref{eq:recN}).

Suppose that $1\le j\le n-1$, and the prime Dyck path of size $j$ has $s\le r-1$ valleys.
By the same argument as the case $j=n$, we have the term 
$q^{s}N(j-1,s)$ from the prime Dyck path.
The contribution of the Dyck path of length $n-j$ is given by
$N(n-j,r-s-1)q^{2j(r-s-1)}$. The factor $q^{2j(r-s-1)}$ comes from the fact that 
we have $2j$ steps left to the Dyck path of length $n-j$.
Finally, we have a factor $q^{2j}$ since we concatenate two Dyck paths and we have 
a valley between them.
As a result, these give the term $N(j-1,s)N(n-j,r-s-1)q^{2j(r-s)+s}$.

By taking the sum over all Dyck paths of size $n$ with $r$ valleys,
we obtain the expression Eq. (\ref{eq:recN}).
\end{proof}

\begin{prop}
\label{prop:N2N}
We have $N_2(n,r)=N(n,r)$.
\end{prop}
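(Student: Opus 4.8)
The plan is to prove the identity by induction on $n$, exploiting the fact that $N_2(n,r)$ is completely characterized by the recurrence (\ref{eq:recN2}) together with the initial data $N_2(0,0)=1$ and $N_2(n,r)=0$ for $r$ outside the range $0\le r\le n-1$. Since the $q$-Narayana numbers $N(n,r)$ of (\ref{def:qNara}) manifestly share these same boundary values, it suffices to show that $N(n,r)$ \emph{also} satisfies the recurrence (\ref{eq:recN2}); uniqueness of the solution of (\ref{eq:recN2}) then forces $N_2(n,r)=N(n,r)$ for all $n$ and $r$. Concretely, substituting the induction hypothesis $N_2(m,s)=N(m,s)$ (valid for all $m<n$) into the right-hand side of (\ref{eq:recN2}) reduces the inductive step to the purely $q$-Narayana assertion that $N(n,r)$ obeys (\ref{eq:recN2}).

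To isolate what must be verified, I would subtract the already-established recurrence (\ref{eq:recN}) from the target recurrence (\ref{eq:recN2}). Both carry the identical convolution kernel $N(j,s)\,N(n-1-j,r-s-1)$ and differ only in the power of $q$ attached to each summand, and in the exponent $2r$ versus $r$ of the non-convolution term. The difference of the two statements collapses to the single relation
\begin{align*}
(q^{2r}-q^{r})N(n-1,r)=\sum_{j=0}^{n-2}\sum_{s=0}^{j-1}N(j,s)\,N(n-1-j,r-s-1)\,q^{2(j+1)(r-s)+s}\bigl(1-q^{s-j}\bigr),
\end{align*}
so the entire proposition comes down to verifying this one identity. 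I would attack it by inserting the explicit product formula (\ref{def:qNara}) for each factor $N(\cdot,\cdot)$ and recognizing the inner double sum as a $q$-analogue of a Vandermonde-type convolution of $q$-binomial coefficients, with the factor $(1-q^{s-j})$ being precisely what is needed to telescope the surplus $q$-powers of the convolution against the first term on the left.

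I expect this last identity to be the main obstacle. The $q$-dilations $q^{2(j+1)(r-s)}$ couple the two summation indices $j$ and $s$ in a way that blocks any naive application of $q$-Chu--Vandermonde, so some care is required to make the telescoping explicit: for instance, splitting $1-q^{s-j}$ into two pieces and reindexing one of them, or passing to generating functions with the same $q$-dilation trick used for $C'_{n}(q)$ in Section \ref{sec:Stat}. An appealing alternative that avoids the algebra altogether is to argue bijectively: since $N(n,r)=\sum_{d\in\mathcal{P}_{n,r}}q^{\mathrm{maj}(d)}$, it would be enough to exhibit a bijection $\mathtt{NC}(n;r)\to\mathcal{P}_{n,r}$ taking a non-crossing partition of rank $r$ to a Dyck path with $r$ valleys and carrying $\sum_{B}\mathrm{wt}(B)$ to $\mathrm{maj}$. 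The weight $\mathrm{wt}(B)=2\sum_{k}j_{k}-j_{1}-j_{p}-p+1$ appears to be tuned so that each block contributes exactly the major-index increments associated with the arcs it induces, which strongly suggests that such a statistic-preserving bijection exists and would give a more transparent proof.
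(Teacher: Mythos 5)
Your reduction is set up correctly: the two recurrences (\ref{eq:recN2}) and (\ref{eq:recN}) are genuinely different (the first terms carry $q^{2r}$ versus $q^{r}$, and the convolution exponents differ by $q^{s-j}$), so induction on $n$ does come down to the compensating identity
\begin{align*}
(q^{2r}-q^{r})N(n-1,r)=\sum_{j=0}^{n-2}\sum_{s=0}^{j-1}N(j,s)\,N(n-1-j,r-s-1)\,q^{2(j+1)(r-s)+s}\bigl(1-q^{s-j}\bigr).
\end{align*}
The problem is that you never prove this identity. You explicitly flag it as ``the main obstacle,'' observe that the coupling $q^{2(j+1)(r-s)}$ blocks a direct $q$-Chu--Vandermonde argument, and then offer only two unexecuted strategies (a telescoping/reindexing manipulation, or a statistic-preserving bijection $\mathtt{NC}(n;r)\to\mathcal{P}_{n,r}$). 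Since the entire content of the proposition lives in exactly this step, the proposal is a correct reduction but not a proof.

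For comparison, the paper takes the bijective route you mention as an ``appealing alternative,'' but with a crucial extra ingredient you would need to supply: rather than finding a single global statistic-preserving map, it builds the bijection recursively along the two decompositions and absorbs the discrepant powers $q^{r}$ and $q^{j-s}$ by explicit local surgeries on the Dyck path --- replacing subwords $10^{k}1$ by $110^{k}$ at each of the $r$ descents of a prime path (contributing $q^{r}$), and repeatedly replacing $110^{k}1$ by $10^{k}11$ exactly $j-s$ times to lower the major index by $j-s$ while preserving the number of descents. These moves are well defined precisely because the relevant subpath is prime, and they are invertible once $(j,s)$ is fixed. If you want to complete your argument you must either carry out such a construction or actually establish the displayed $q$-identity; as written, neither is done.
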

\begin{proof}
The $q$-Narayana numbers $N(n,r)$ are generating functions of Dyck paths of length $n$ 
with $r$ descents. Similarly, $N_2(n,r)$ is a generating function of non-crossing partition
of $[n]$ with rank $r$.
We prove the proposition by constructing a bijection between a Dyck path with $r$ descents
and a non-crossing partition with rank $r$.

We compare the expression (\ref{eq:recN2}) with Eq. (\ref{eq:recN}).
For $n=1$, we have a unique Dyck path $\mu=UD$ and a unique non-crossing 
partition $\pi=1$. We construct a bijection between a Dyck path and a non-crossing 
partition inductively. 

The first term in Eqs. (\ref{eq:recN2}) and (\ref{eq:recN}) consists of a factor 
times the generating function with smaller size $n-1$.
By induction assumption, we have a bijection between $\mathcal{P}^{1}_{n-1}$
and $\mathtt{NC}(n-1)$.
The factor in the first term differs by $q^{r}$. 
Note that a Dyck path $\mu$ in $\mathcal{P}^{1}_{n-1}$ corresponds to a prime 
Dyck path in $\mathcal{P}^{1}_{n}$ by adding $0$ and $1$ to $\mu$ from left
and right. Similarly, given a non-crossing partition $\pi\in\mathtt{NC}(n-1)$, 
we increase the integers in $\pi$ by one and add a single block $1$ to the new $\pi$.
This gives a non-crossing partition in $\mathtt{NC}(n)$.
For example, if $\pi=125/34\in\mathtt{NC}(5)$, then we have a new 
non-crossing partition $1/236/45\in\mathtt{NC}(6)$.
The weight of $\pi$ is $q^{r}$ times the weight of $\mu$. 
Suppose that $10^{k}1$ is a sub-path in $\mu$ such that the first $1$ 
is a descent in $\mu$. 
We transform the sub-path $10^{k}1$ to a new sub-path $110^{k}$ in $\mu$.
We perform this transformation for all $r$ descents in $\mu$, and this 
gives the factor $q^{r}$.
This operation is well-defined since the path $\mu$ is prime.

We consider the second terms in Eqs. (\ref{eq:recN2}) and (\ref{eq:recN}).
By induction assumption, we have a bijection between a pair of non-crossing 
partitions $(\pi_1,\pi_{2})$ and a pair of Dyck paths $(\mu_1,\mu_2)$ such 
that $\pi_1\in\mathtt{NC}(j)$ with rank $s$, $\pi_2\in\mathtt{NC}(n-j-1)$ with 
rank $r-s-1$, $\mu_1\in\mathcal{P}^{1}_{j}$ with $s$ descents, and 
$\mu_2\in\mathcal{P}^{1}_{n-j-1}$ with $r-s-1$ descents.
From the pair $(\pi_1,\pi_2)$, we construct a non-crossing partition $\pi$ in $\mathtt{NC}(n)$
as follows.
First, we increase the integers in $\pi_1$ by one, and add a block consisting of 
a single integer $1$ to the new $\pi_1$. We have a non-crossing partition $\pi'_1$ of size $j+1$.
We increase the integers in $\pi_2$ by $j+1$, and add blocks $\pi'_{1}$ to $\pi_2$.
As a result, we have a non-crossing partition $\pi$ from $\pi'_1$ and $\pi_2$.
For example, when $\pi_1=14/23$ and $\pi_2=15/24/3$, then we have $\pi'_{1}=1/25/34$ and 
$\pi=1/25/34/6\underline{10}/79/8\in\mathtt{NC}(10)$.
Similarly, we construct a Dyck path $\mu$ in $\mathcal{P}^1_{n}$ from the pair $(\mu_1,\mu_2)$
as follows.
First, we add $0$ and $1$ to $\mu_1$ from left and right respectively, and obtain a new path 
$\mu'_{1}$. The path $\mu$ is given by a concatenation of two paths $\mu'_1$ and $\mu_2$.
By comparing the factor in Eqs. (\ref{eq:recN2}) and (\ref{eq:recN}), the weight of $\pi$ 
is $q^{-j+s}$ times the wight of $\mu$. 
Therefore, to construct the bijection which preserves the weight, we transform $\mu$ 
to a new Dyck path $\overline{\mu}$. 
Since we have $j-s\ge0$, we have $\mathrm{maj}(\overline{\mu})=\mathrm{maj}(\mu)-j+s$.
Recall that $\mu$ is a concatenation of two paths $\mu'_1\in\mathcal{P}^{1}_{j}$ and $\mu_2$, and 
the last letter of $\mu'_{1}$ is $1$ which is a descent in $\mu$.
Let $\nu=110^{k}1$ be a sub-path in $\mu$ such that the second $1$ in $\nu$ is the last letter of $\mu'_1$.
We transform this sub-path $110^{k}1$ to $10^{k}11$ in $\mu$. This operation is well-defined 
since $\mu'_1$ is prime. The major index of the new path is the major index of $\mu$ minus 
one by construction. Further, the number of descents in the new path is equal to that in $\mu$.
We perform the same operations $t\le j-s$ times on the new descent $d$ until we have a pattern $010^{k}1$.
If $t=j-s$, then the we define the new path as $\overline{\mu}$.
If $t<j-s$, we move to the right-most descent in the new path which is left to the descent $d$.
Then, we continue to change the pattern $110^{k}1$ to $10^{k}11$ for some $k$, and move a left descent 
if necessary. The total number of transformations is $j-s$. 
We define the new path obtained by $j-s$ transformations as $\overline{\mu}$.
For example, we have the following Dyck path shown in Figure \ref{fig:transDyck}
if $(j,s)=(4,1)$.
\begin{figure}[ht]
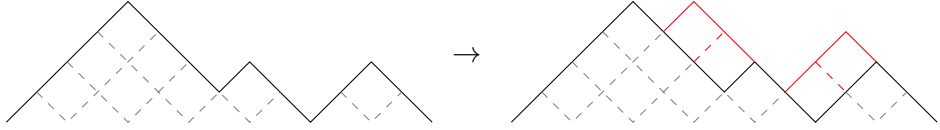

\tikzpic{-0.5}{[scale=0.4]
\draw(0,0)--(4,4)--(7,1)--(8,2)--(10,0)--(12,2)--(14,0);
\draw[gray,dashed](1,1)--(2,0)--(5,3)(2,2)--(4,0)--(6,2)
(3,3)--(6,0)--(7,1)--(8,0)--(9,1)(11,1)--(12,0)--(13,1);
}
$\rightarrow$
\tikzpic{-0.5}{[scale=0.4]
\draw(0,0)--(4,4)--(7,1)--(8,2)--(10,0)--(12,2)--(14,0);
\draw[gray,dashed](1,1)--(2,0)--(5,3)(2,2)--(4,0)--(6,2)
(3,3)--(6,0)--(7,1)--(8,0)--(9,1)(11,1)--(12,0)--(13,1);
\draw[red](5,3)--(6,4)--(8,2)(9,1)--(11,3)--(12,2);
\draw[red,dashed](6,2)--(7,3)(10,2)--(11,1);
}
\caption{Transformation of a Dyck path}
\label{fig:transDyck}
\end{figure}
The Dyck path $0000111.011.0011$ is transformed to the path 
$00001.0111.00111$ as in Figure \ref{fig:transDyck}.
Note that the number of descents is preserved.
Once we have a pair of integers $(j,s)$, the above mentioned operation
is invertible. 
As a consequence a map between $\pi$ and $\overline{\mu}$ is a bijection
which preserves the major index.
Then, we have 
\begin{align*}
N_{2}(n,r)=\sum_{\pi\in\mathtt{NC}(n)}q^{\mathrm{maj}(\pi)}
=\sum_{\overline{\mu}\in\mathcal{P}^{1}_{n}}q^{\mathrm{maj}(\overline{\mu})}
=N(n,r),
\end{align*}
which completes the proof.
\end{proof}

\begin{example}
We consider the correspondence between $N(4,2)$ and $N_2(4,2)$ given 
in Proposition \ref{prop:N2N}.
First, in $N(3,2)$, we have 
\begin{align*}
\tikzpic{-0.5}{[scale=0.3]
\draw(0,0)--(2,2)--(3,1)--(4,2)--(5,1)--(6,2)--(8,0);
\draw[dashed](1,1)--(2,0)--(3,1)--(4,0)--(5,1)--(6,0)--(7,1);
}\rightarrow
\tikzpic{-0.5}{[scale=0.3]
\draw(0,0)--(2,2)--(4,0)--(5,1)--(6,0)--(7,1)--(8,0);
\draw[dashed](1,1)--(2,0)--(3,1);
}\leftrightarrow \quad 1/234
\end{align*}
From $N(0,0)N(3,1)$, we have 
\begin{align*}
\tikzpic{-0.5}{[scale=0.3]
\draw(0,0)--(1,1)--(2,0)--(3,1)--(4,0)--(6,2)--(8,0);
\draw[dashed](5,1)--(6,0)--(7,1);
}\leftrightarrow 123/4 \qquad
\tikzpic{-0.5}{[scale=0.3]
\draw(0,0)--(1,1)--(2,0)--(4,2)--(5,1)--(6,2)--(8,0);
\draw[dashed](3,1)--(4,0)--(5,1)--(6,0)--(7,1);
}\leftrightarrow 124/3 \qquad
\tikzpic{-0.5}{[scale=0.3]
\draw(0,0)--(1,1)--(2,0)--(4,2)--(6,0)--(7,1)--(8,0);
\draw[dashed](3,1)--(4,0)--(5,1);
}\leftrightarrow 12/34 
\end{align*}
From $N(1,0)N(2,1)$, we have
\begin{align*}
\tikzpic{-0.5}{[scale=0.3]
\draw(0,0)--(2,2)--(4,0)--(5,1)--(6,0)--(7,1)--(8,0);
\draw[dashed](1,1)--(2,0)--(3,1);
}\rightarrow
\tikzpic{-0.5}{[scale=0.3]
\draw(0,0)--(2,2)--(3,1)--(4,2)--(6,0)--(7,1)--(8,0);
\draw[dashed](1,1)--(2,0)--(3,1)--(4,0)--(5,1);
}\leftrightarrow \qquad 134/2
\end{align*}
Finally, from $N(2,1)N(1,0)$, we have 
\begin{align*}
\tikzpic{-0.5}{[scale=0.3]
\draw(0,0)--(2,2)--(3,1)--(4,2)--(6,0)--(7,1)--(8,0);
\draw[dashed](1,1)--(2,0)--(3,1)--(4,0)--(5,1);
}\rightarrow
\tikzpic{-0.5}{[scale=0.3]
\draw(0,0)--(2,2)--(3,1)--(4,2)--(5,1)--(6,2)--(8,0);
\draw[dashed](1,1)--(2,0)--(3,1)--(4,0)--(5,1)--(6,0)--(7,1);
}\leftrightarrow \qquad 14/23
\end{align*}
\end{example}

\begin{proof}[Proof of Theorem \ref{thrm:NCtoCat}]
We have 
\begin{align*}
\sum_{\pi\in\mathtt{NC}(n)}\prod_{B\in\pi}q^{\mathrm{wt}(B)}
&=\sum_{r=0}^{n-1}\sum_{\pi\in\mathtt{NC}(n,r)}\prod_{B\in\pi}q^{\mathrm{wt}(B)}, \\
&=\sum_{r=0}^{n-1}N_{2}(n,r)
=\sum_{r=0}^{n-1}N(n,r),\\
&=\genfrac{}{}{}{}{1}{[n+1]}\genfrac{[}{]}{0pt}{}{2n}{n},
\end{align*}
where we have used Proposition \ref{prop:N2N} and the property of $q$-Narayana numbers.
\end{proof}

\begin{example}
When $n=3$, the major indices for non-crossing partitions are given in Table \ref{table:NCmaj}.
\begin{table}[ht]
\begin{tabular}{c|ccccc}
$\pi$ & $1/2/3$ & $12/3$ & $13/2$ & $1/23$ & $123$ \\ \hline \\[-12pt]
$\mathrm{maj}(\pi)$ & $0$ & $2$ & $3$ & $4$ & $6$     
\end{tabular}
\\[12pt]
\caption{Non-crossing partitions and major indices}
\label{table:NCmaj}
\end{table}
\end{example}

\subsection{Non-crossing partitions and \texorpdfstring{$q$}{q}-Catalan number \texorpdfstring{$C_{n}(q)$}{Cn(q)}}
Let $\pi$ be a non-crossing partition in $\mathtt{NC}(n)$.
The partition $\pi$ consists of $m$ blocks $B_1,\ldots,B_{m}$.
We define the weight $\mathrm{wt}'(B)$ of a block $B$ by 
\begin{align*}
\mathrm{wt}'(B):=\sum_{j\in B}(j-\min B),
\end{align*}
where $j\in B$ means that an integer $j$ appears in the block $B$.
Similarly, the weight $\mathrm{wt}'(\pi)$  of the partition $\pi$ is defined to be
\begin{align*}
\mathrm{wt}'(\pi):=\sum_{i=1}^{m}\mathrm{wt}'(B_{i}).
\end{align*}

\begin{prop}
\label{prop:NCqCat}
We have 
\begin{align}
\label{eq:NCqCat}
\sum_{\pi\in\mathtt{NC}(n)}q^{\mathrm{wt}'(\pi)}=q^{\genfrac{(}{)}{0pt}{}{n}{2}}C_{n}(q^{-1}).
\end{align}
\end{prop}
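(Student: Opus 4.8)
The plan is to reduce the statement to Proposition \ref{prop:231Crn} at $r=1$, which already gives $\sum_{\omega\in\mathfrak{S}_{n}(231)}q^{\mathrm{inv}(\omega)}=q^{\binom{n}{2}}C_{n}(q^{-1})$ (recall $\mathrm{INV}=\mathrm{inv}$ and $C_n^{(1)}=C_n$ when $r=1$). Thus it suffices to construct a bijection $\Psi\colon\mathtt{NC}(n)\to\mathfrak{S}_{n}(231)$ with $\mathrm{inv}(\Psi(\pi))=\mathrm{wt}'(\pi)$ for every $\pi$. As a first step I would rewrite the weight as a pair count: since $j-\min B=\#\{z:\min B\le z<j\}$ for $j\in B$, summing over all $j$ yields $\mathrm{wt}'(\pi)=\#\{(z,x):z<x,\ z\ge\min B(x)\}$, where $B(x)$ denotes the block containing $x$.

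I would define $\Psi$ by ordering the blocks of $\pi$ by increasing minimum, writing the entries of each block in decreasing order, and concatenating the resulting words; for instance $\Psi(13/2)=312$ and $\Psi(1/23)=132$. The three things to check are: (i) $\Psi(\pi)$ is $231$-avoiding, (ii) $\mathrm{inv}(\Psi(\pi))=\mathrm{wt}'(\pi)$, and (iii) $\Psi$ is a bijection. For (ii) note that a pair of values $z<x$ is an inversion of $\Psi(\pi)$ exactly when $x$ precedes $z$, which happens iff $z,x$ share a block (entries within a block decrease) or $\min B(z)>\min B(x)$. The key lemma, which I expect to be the main obstacle, is that for $z<x$ in distinct blocks the non-crossing property forces the equivalence $z\ge\min B(x)\Leftrightarrow\min B(z)>\min B(x)$; the reverse implication is immediate from $z\ge\min B(z)$, and I would prove the forward one by contradiction: if $\min B(z)\le\min B(x)\le z<x$ then $\min B(z),z\in B(z)$ and $\min B(x),x\in B(x)$ realize the forbidden pattern $\min B(z)<\min B(x)<z<x$. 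Combining this with the same-block pairs, which all satisfy $z\ge\min B(x)$ automatically, matches the two counts and gives (ii).

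For (i) I would argue contrapositively: a $231$ pattern $\omega_{k}<\omega_{i}<\omega_{j}$ at positions $i<j<k$ forces $\omega_i,\omega_j$ into distinct blocks with $\min B(\omega_i)<\min B(\omega_j)$, and since $k>j$ one gets $\min B(\omega_j)\le\omega_k<\omega_i<\omega_j$; then $\min B(\omega_i),\omega_i\in B(\omega_i)$ and $\min B(\omega_j),\omega_j\in B(\omega_j)$ give the crossing $\min B(\omega_i)<\min B(\omega_j)<\omega_i<\omega_j$, contradicting non-crossingness. For (iii), injectivity follows because consecutive blocks of $\Psi(\pi)$ are separated by an ascent—the last entry of a block is its minimum, which is strictly below the maximum (hence first entry) of the next block—so the blocks are recovered as the maximal decreasing runs of $\Psi(\pi)$; since $|\mathtt{NC}(n)|=|\mathfrak{S}_{n}(231)|=C_{n}$, the injection $\Psi$ is automatically surjective. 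Putting (i)--(iii) together with Proposition \ref{prop:231Crn} then yields
\begin{align*}
\sum_{\pi\in\mathtt{NC}(n)}q^{\mathrm{wt}'(\pi)}
=\sum_{\omega\in\mathfrak{S}_{n}(231)}q^{\mathrm{inv}(\omega)}
=q^{\binom{n}{2}}C_{n}(q^{-1}),
\end{align*}
which is the asserted identity.
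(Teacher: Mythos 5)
Your proposal is correct, but it proves the identity by a genuinely different route than the paper. The paper's proof is a generating-function argument: it sets $\widetilde{C}_{n}(q):=\sum_{\pi\in\mathtt{NC}(n)}q^{\mathrm{wt}'(\pi)}$, conditions on the block $B$ containing $n$ (splitting $\pi$ at $j=\min B$ into a non-crossing partition of $[1,j-1]$ and one of $[j,n]$, the pair $\{j,n\}$ contributing the factor $q^{n-j}$), derives the recurrence $\widetilde{C}_{n}(q)=\widetilde{C}_{n-1}(q)+\sum_{j}q^{n-j}\widetilde{C}_{j-1}(q)\widetilde{C}_{n-j}(q)$, and matches it against the Carlitz recurrence (\ref{eq:recCn}) for $C_{n}(q)$. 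You instead build the explicit bijection $\mathtt{NC}(n)\to\mathfrak{S}_{n}(231)$ (blocks ordered by increasing minimum, each written in decreasing order), show it transports $\mathrm{wt}'$ to $\mathrm{inv}$ via the non-crossing/crossing dichotomy for pairs $z<x$ in distinct blocks, and then quote Proposition \ref{prop:231Crn} at $r=1$ (using $\mathrm{INV}=\mathrm{inv}$ and $C^{(1)}_{n}=C_{n}$ there). I checked your three claims: the rewriting $\mathrm{wt}'(\pi)=\#\{(z,x):z<x,\ z\ge\min B(x)\}$, the equivalence $z\ge\min B(x)\Leftrightarrow\min B(z)>\min B(x)$ for $z<x$ in distinct blocks (both directions, including distinctness of the four integers realizing the crossing), and the recovery of the blocks as maximal decreasing runs; all are sound. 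Your approach costs a little more verification (pattern avoidance, the inversion count) but buys a bijective explanation of the identity and reuses machinery already established in the paper, whereas the paper's recurrence argument is shorter and self-contained but non-bijective.
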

\begin{proof}
Let $\widetilde{C}_{n}(q)$ be the left hand side of Eq. (\ref{eq:NCqCat}).
Let $B$ be a block of $\pi$ which contains $n$ and $j:=\min B$.
We consider two cases: 1) $j=n$, and 2) $j<n$.

Case 1). Since $n$ forms a block by itself, the contribution to $\widetilde{C}_{n}(q)$ is $\widetilde{C}_{n-1}(q)$.

Case 2). Since $j$ is the minimum integer in $B$, and $\pi$ is non-crossing, blocks in $[1,j-1]$ do not intersect 
with $B$. This implies that the contribution to $\widetilde{C}_{n}(q)$ is given by $C_{j-1}(q)C_{n-j}(q)q^{n-j}$
where $q^{n-j}$ comes from the weight for integers $n$ and $j$.

We have 
\begin{align*}
\widetilde{C}_{n}(q)&=\widetilde{C}_{n-1}(q)+\sum_{j=1}^{n-1}q^{n-j}\widetilde{C}_{j-1}(q)\widetilde{C}_{n-j}(q), \\
&=\sum_{k=0}^{n}q^{k}\widetilde{C}_{k}(q)\widetilde{C}_{n-k}(q).
\end{align*}
By comparing the above expression with Eq. (\ref{eq:recCn}), we obtain Eq. (\ref{eq:NCqCat}).
\end{proof}

\subsection{Refinement of the major index}
Let $\pi$ be a non-crossing partition which consists of $m$ blocks $B_1,\ldots,B_{m}$.
The major index of $\pi$ is defined as the sum of the major index of $B_{i}$, {\it i.e.},
$\mathrm{maj}(\pi)=\sum_{i=1}^{m}\mathrm{wt}(B_{i})$ where $\mathrm{wt}(B_{i})$ is defined 
by Eq. (\ref{eq:wtBi}).
We will construct a bijection $\beta:\mathtt{NC}(n)\xrightarrow{\sim}\mathcal{S}(312)$, 
$\pi\mapsto\sigma:=\beta(\pi)$, from 
non-crossing partitions to $312$-avoiding permutations such that 
\begin{align*}
\mathrm{maj}(\pi)=\mathrm{maj}(\sigma)+\mathrm{maj}(\sigma^{-1}).
\end{align*}

Let $S_{X}(\pi):=(i_1,\ldots,i_{m})$ and $S_{Y}(\pi):=(j_1,\ldots,j_{m})$ be two increasing sequences 
of positive integers obtained from $\pi$ by 
\begin{align*}
\{i_{k}:1\le k\le m\}&=\{\min B_{k}:1\le k\le m\}, \\
\{j_{k}:1\le k\le m\}&=\{\max B_{k}:1\le k\le m\}.
\end{align*}
By definition, we have $m=n-\mathtt{rk}(\pi)$ where $\mathtt{rk}$ is the rank function.
In a Dyck path, a subsequence $01$ is called a peak.
We say that the position of a peak in a Dyck path is the $i$-th position 
if the $i$-th step in the Dyck path is $0$ and the $i+1$-th step is $1$.
It is easy to see that if we fix the positions of peaks and valleys, these give 
a unique Dyck path.
We give the positions of peaks in a Dyck path $d(\pi)$ from $S_{X}(\pi)$ and
$S_{Y}(\pi)$ by 
\begin{align*}
S_{P}(\pi):=\{i_{k}+j_{k}-1: 1\le k\le m\}.
\end{align*}
Similarly, the positions of valleys in a Dyck path $d(\pi)$ is given by
\begin{align*}
S_{V}(\pi):=\{i_{k+1}+j_{k}-1: 1\le k\le m-1\}.
\end{align*}
Note that the number of valleys is one less than that of peaks in a Dyck path.
Let $d(\pi)$ be the Dyck path whose positions of peaks and valleys are given 
by $S_{P}(\pi)$ and $S_{V}(\pi)$.

Let $d:=d(\pi)=d_1\ldots d_{2n}\in\{0,1\}^{2n}$ be a Dyck path in terms of two 
alphabets $0$ and $1$.
Let $l_{i}$, $1\le i\le n$, be the position of the $i$-th $0$ from the left end in $d$.
We define non-negative integers $a_{i}$, $1\le i\le n$, by 
\begin{align*}
a_{i}:=\{j<l_{n+1-i}: d_{j}=1\}.
\end{align*}
Then, we define a permutation $\mu$ whose Lehmer code is $(a_1,\ldots,a_{n})$.
As a summary, we have 
\begin{align}
\label{eq:pitosigma}
\pi\mapsto d(\pi)\mapsto \mu\mapsto\overline{\mu^{-1}}=:\sigma,
\end{align}
where $\overline{\mu}=\overline{\mu_1}\ldots\overline{\mu_{n}}$ with 
$\overline{\mu_{i}}=n+1-\mu_{i}$.

\begin{lemma}
The permutation $\sigma$ is $312$-avoiding.
\end{lemma}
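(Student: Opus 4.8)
The plan is to reduce the statement to a pattern-avoidance property of the intermediate permutation $\mu$, which in turn is governed by a monotonicity property of its Lehmer code. First I would record the two standard symmetries of pattern avoidance: for any permutation $\rho$ the complement satisfies ``$\overline{\rho}$ avoids $\tau$ iff $\rho$ avoids $\overline{\tau}$'', and the inverse satisfies ``$\rho^{-1}$ avoids $\tau$ iff $\rho$ avoids $\tau^{-1}$''. Applying these to $\sigma=\overline{\mu^{-1}}$ with $\tau=312$, and using $\overline{312}=132$ together with $132^{-1}=132$, I obtain
\begin{align*}
\sigma \text{ avoids } 312 \iff \mu^{-1} \text{ avoids } 132 \iff \mu \text{ avoids } 132.
\end{align*}
Thus it suffices to prove that $\mu$ is $132$-avoiding.

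Next I would characterise $132$-avoidance through the Lehmer code $(a_1,\ldots,a_n)$, taken in the convention $a_i=\#\{l>i:\mu_l<\mu_i\}$, so that $\mu_i$ is the $(a_i+1)$-st smallest value not used at positions $1,\ldots,i-1$. The claim is that $\mu$ avoids $132$ if and only if $(a_1,\ldots,a_n)$ is weakly decreasing. I would then check that the code read off from $d(\pi)$ is indeed weakly decreasing: writing $c_m$ for the number of $1$'s to the left of the $m$-th $0$ of $d=d(\pi)$, the sequence $c_1\le c_2\le\cdots\le c_n$ is weakly increasing because the prefix of $d$ grows with $m$; since $a_i=c_{n+1-i}$ by definition, the code is weakly decreasing, with $a_n=c_1=0$ and $a_i\le n-i$ (the ballot condition on $d$ forces $c_{n+1-i}\le n-i$). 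This is the routine bookkeeping part.

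The heart of the matter is the implication ``weakly decreasing code $\Rightarrow$ $\mu$ avoids $132$'', which I expect to be the main obstacle. I would argue by contradiction: among all occurrences of a $132$ pattern $\mu_i<\mu_k<\mu_j$ with $i<j<k$, choose one minimizing $j-i$, and show the minimizer must have $j=i+1$. Indeed, if $j\ge i+2$, comparing $\mu_{i+1}$ with $\mu_k$ produces either the shorter pattern $(i+1,j,k)$ (when $\mu_{i+1}<\mu_k$) or the pattern $(i,i+1,k)$ (when $\mu_{i+1}>\mu_k$), each contradicting minimality. Once $j=i+1$, we have $\mu_i<\mu_{i+1}$ and some $k>i+1$ with $\mu_i<\mu_k<\mu_{i+1}$; then every position $l>i+1$ with $\mu_l<\mu_i$ also satisfies $\mu_l<\mu_{i+1}$, while $k$ itself contributes to $a_{i+1}$ but not to $a_i$, forcing $a_{i+1}\ge a_i+1$ and contradicting weak decrease. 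The converse implication, which I do not need here, follows either from the same bookkeeping or from a cardinality count, since both the $132$-avoiding permutations and the weakly decreasing codes with $a_i\le n-i$ are enumerated by $C_n$. Combining the three steps yields that $\sigma$ is $312$-avoiding.
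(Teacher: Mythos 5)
Your proof is correct and follows essentially the same route as the paper: reduce $312$-avoidance of $\sigma=\overline{\mu^{-1}}$ to $132$-avoidance of $\mu$ via the complement and inverse symmetries, and then deduce $132$-avoidance from the weak monotonicity of the Lehmer code read off from the Dyck path $d(\pi)$. The only difference is that you supply an explicit argument for the implication ``weakly decreasing Lehmer code $\Rightarrow$ $132$-avoiding,'' which the paper simply asserts; your minimal-gap argument for that step is sound.
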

\begin{proof}
Since $d(\pi)$ is a Dyck path, the Lehmer code $(a_1,\ldots,a_{n})$ satisfies 
$a_1\ge a_2\ge \ldots \ge a_{n}$. The permutation $\mu$ is a $132$-avoiding 
permutation. This implies that $\mu^{-1}$ is also $132$-avoiding, and $\sigma$
is $312$-avoiding.
\end{proof}

The major indices of $\sigma$ and $\sigma^{-1}$ are given in terms of 
the two sets $S_{X}(\pi)$ and $S_{Y}(\pi)$.
\begin{lemma}
\label{lemma:sigmainv}
We have
\begin{align*}
\mathrm{maj}(\sigma)=\sum_{i\in A(\pi)}i, \qquad
\mathrm{maj}(\sigma^{-1})=\sum_{i\in B(\pi)}i,
\end{align*}
where 
\begin{align*}
A(\pi):=\{i<n: i+1\notin S_{X}(\pi) \}, \qquad
B(\pi):=\{i<n: i\notin S_{Y}(\pi)\}.
\end{align*}
\end{lemma}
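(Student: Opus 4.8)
The plan is to prove the two identities by showing that the descent sets of $\sigma$ and of $\sigma^{-1}$ coincide with $A(\pi)$ and $B(\pi)$ respectively; the stated formulas then follow immediately from $\mathrm{maj}(\tau)=\sum_{i\in\mathrm{Des}(\tau)}i$. First I would record how $\sigma$ and its inverse are read off from $\mu$. Since $\sigma=\overline{\mu^{-1}}$, a direct computation gives $\sigma^{-1}_k=\mu_{n+1-k}$, so that $\sigma^{-1}$ is the reversal of $\mu$; moreover $i\in\mathrm{Des}(\sigma)$ holds exactly when $\mu^{-1}_i<\mu^{-1}_{i+1}$, i.e.\ when the value $i$ occurs to the left of the value $i+1$ in $\mu$, while $i\in\mathrm{Des}(\sigma^{-1})$ holds exactly when $\mu_{n-i}<\mu_{n+1-i}$, i.e.\ when $n-i$ is an ascent of $\mu$. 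Thus one descent set is governed by the relative order of consecutive \emph{values} of $\mu$ and the other by consecutive \emph{positions}.

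Next I would translate $A(\pi)$ and $B(\pi)$ into combinatorial data of the Dyck path $d(\pi)$. Writing $d(\pi)=0^{c_1}1^{e_1}\cdots 0^{c_m}1^{e_m}$, the $k$-th peak sits at $c_1+e_1+\cdots+c_k$ and the $k$-th valley at $c_1+e_1+\cdots+e_k$. Comparing these with $S_P(\pi)=\{i_k+j_k-1\}$ and $S_V(\pi)=\{i_{k+1}+j_k-1\}$ and using $i_1=1$ yields the run-length relations
\begin{align*}
c_1+\cdots+c_k=j_k,\qquad e_1+\cdots+e_k=i_{k+1}-1\qquad(1\le k\le m-1).
\end{align*}
Since $S_Y(\pi)=\{j_1<\cdots<j_m=n\}$ and $S_X(\pi)=\{1=i_1<\cdots<i_m\}$, the two target sets become
\begin{align*}
B(\pi)=\{i<n: i\notin\{c_1+\cdots+c_k\}_{k<m}\},\qquad
A(\pi)=\{i<n: i\notin\{e_1+\cdots+e_k\}_{k<m}\};
\end{align*}
that is, $B(\pi)$ consists of those $i$ for which the $i$-th and $(i+1)$-th $0$'s of $d(\pi)$ lie in a common up-run, and $A(\pi)$ of those $i$ for which the $i$-th and $(i+1)$-th $1$'s lie in a common down-run.

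It then remains to match these with the descent sets. For $\sigma^{-1}$ this is the clean half: because the Lehmer code $(a_1,\ldots,a_n)$ of $\mu$ is weakly decreasing, the standard criterion ``$k$ is a descent of $\mu$ iff $a_k>a_{k+1}$'' shows that $n-i$ is an ascent of $\mu$ iff $a_{n-i}=a_{n-i+1}$, which via $a_{n-i}-a_{n-i+1}=l_{i+1}-l_i-1$ is equivalent to $l_{i+1}=l_i+1$, i.e.\ to the $i$-th and $(i+1)$-th $0$'s being adjacent. Hence $\mathrm{Des}(\sigma^{-1})=B(\pi)$. For $\sigma$ I must show the symmetric statement that the value $i$ precedes the value $i+1$ in $\mu$ precisely when the $i$-th and $(i+1)$-th $1$'s of $d(\pi)$ are adjacent.

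I expect this last point to be the main obstacle. Unlike the statement about consecutive positions, the relative order of two consecutive \emph{values} is not directly visible from the weakly decreasing code, since it is really a property of $\mu^{-1}$ (equivalently of the inverse descent set of $\mu$). I would establish it by analysing the reconstruction of $\mu$ from $(a_1,\ldots,a_n)$ — tracking, as the entries are filled in and each value is inserted into the first available slot, how the descents of the underlying $0/1$ word between the two relevant $1$'s force an inversion of $i$ and $i+1$, and conversely — or alternatively by invoking the known fact that the chosen area-type bijection carries the return/down-run structure of the Dyck path to the inverse descent set of the associated $132$-avoiding permutation. Once $\mathrm{Des}(\sigma)=A(\pi)$ is secured, summing the elements of the two descent sets gives $\mathrm{maj}(\sigma)=\sum_{i\in A(\pi)}i$ and $\mathrm{maj}(\sigma^{-1})=\sum_{i\in B(\pi)}i$, as claimed.
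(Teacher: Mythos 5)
Your reduction of the lemma to the two descent-set identities $\mathrm{Des}(\sigma)=A(\pi)$ and $\mathrm{Des}(\sigma^{-1})=B(\pi)$ is exactly the route the paper takes, and the pieces you actually carry out are correct: the identity $\sigma^{-1}_{k}=\mu_{n+1-k}$, the run-length relations $c_1+\cdots+c_k=j_k$ and $e_1+\cdots+e_k=i_{k+1}-1$ (using $i_1=1$), the resulting description of $A(\pi)$ and $B(\pi)$ in terms of adjacent $1$'s and adjacent $0$'s of $d(\pi)$, and the Lehmer-code computation $a_{n-i}-a_{n-i+1}=l_{i+1}-l_i-1$ that settles $\mathrm{Des}(\sigma^{-1})=B(\pi)$. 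That half is in fact argued more carefully than in the paper, which disposes of it with ``exchange rows and columns.''

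However, the proposal is not a complete proof: the equivalence ``the value $i$ precedes the value $i+1$ in $\mu$ if and only if the $i$-th and $(i+1)$-th $1$'s of $d(\pi)$ lie in a common run,'' which is the whole content of $\mathrm{Des}(\sigma)=A(\pi)$, is only announced; you name two possible strategies but execute neither. This is a genuine gap rather than a routine verification --- it is precisely the point where the paper's own (terse) proof does its work, by locating the $k$-th peak of $d(\pi)$ at the $i_k$-th $1$ and reading off the non-descents of $\sigma$ from the columns $i_k$. A clean way to close the gap inside the Lehmer-code framework you already set up: the code $(a_1,\ldots,a_n)$ of $\mu$ is a partition $\lambda$, so $\mu$ is dominant and the code of $\mu^{-1}$ is the conjugate partition $\lambda'$; hence $v\in\mathrm{Des}(\mu^{-1})$ iff $\lambda'_v>\lambda'_{v+1}$ iff $v$ occurs as a part of $\lambda$, i.e.\ iff some $0$ of $d(\pi)$ has exactly $v$ $1$'s to its left, i.e.\ iff the $v$-th and $(v+1)$-th $1$'s are \emph{not} adjacent. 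Taking complements in $[1,n-1]$ and using $i\in\mathrm{Des}(\sigma)\Leftrightarrow i\notin\mathrm{Des}(\mu^{-1})$ then yields $\mathrm{Des}(\sigma)=A(\pi)$. Without an argument of this kind the first identity of the lemma remains unproved.
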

\begin{proof}
We first show $\mathrm{maj}(\sigma)=\sum_{i\in A(\pi)}i$.
We consider the Dyck path $\mu$ in Eq. (\ref{eq:pitosigma}). 
The integer $i_{k}$ in $S_{X}(\pi)$ indicates the column number of 
the $k$-th peak in $\mu$. 
By construction of $\sigma$, this means that $i_k$ cannot be 
a descent in $\sigma$. On the other hand, if the integer $i+1\notin S_{X}(\pi)$
and $i<n$, the integer $i$ is a descent of $\sigma$.
Since $\mathrm{maj}(\sigma)$ is the sum of the descents of $\sigma$, the 
statement follows.
  
The inverse of the permutation corresponds to the exchange of $X$ and $Y$, and 
the exchange of row and column in $\mu$.
By a similar argument in the case of $\mathrm{maj}(\sigma)$, we have 
$\mathrm{maj}(\sigma^{-1})=\sum_{i\in B(\pi)}i$.
This completes the proof.
\end{proof}

\begin{theorem}
\label{thrm:NCqt}
Let $\pi\in\mathtt{NC}(n)$, and $\sigma$ is the $312$-avoiding permutation 
obtained by Eq. (\ref{eq:pitosigma}).
We have 
\begin{align*}
\mathrm{maj}(\pi)=\mathrm{maj}(\sigma)+\mathrm{maj}(\sigma^{-1}).
\end{align*}
\end{theorem}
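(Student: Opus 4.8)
The plan is to compute both sides of the identity entirely in terms of the two increasing sequences $S_X(\pi)=(i_1,\dots,i_m)$ and $S_Y(\pi)=(j_1,\dots,j_m)$ of block-minima and block-maxima, and then to verify that the two resulting expressions in this data agree. Lemma \ref{lemma:sigmainv} already expresses the right-hand side $\mathrm{maj}(\sigma)+\mathrm{maj}(\sigma^{-1})$ through $S_X(\pi)$ and $S_Y(\pi)$, so the bulk of the work is to put $\mathrm{maj}(\pi)$ into the same shape.

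First I would rewrite the block weight symmetrically. For a block $B=(j_1,\dots,j_p)$ the definition (\ref{eq:wtBi}) is equivalent to
\begin{align*}
\mathrm{wt}(B)=2\sum_{j\in B}j-\min B-\max B-|B|+1,
\end{align*}
and substituting $p=1$ shows that this single formula also yields $0$ in the singleton case, so it holds for every block without exception. Summing over all $B\in\pi$ and using that the blocks partition $\{1,\dots,n\}$ — so that $\sum_{B}\sum_{j\in B}j=\binom{n+1}{2}$, $\sum_B\min B=\sum_k i_k$, $\sum_B\max B=\sum_k j_k$, $\sum_B|B|=n$, and the number of blocks is $m$ — I obtain
\begin{align*}
\mathrm{maj}(\pi)=2\binom{n+1}{2}-\sum_{k=1}^{m}i_k-\sum_{k=1}^{m}j_k-(n-m).
\end{align*}

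Next I would simplify the right-hand side. The key structural fact is that $1$ is always a block-minimum and $n$ always a block-maximum, so $1\in S_X(\pi)$ and $n\in S_Y(\pi)$. Consequently, in $A(\pi)=\{i<n:i+1\notin S_X(\pi)\}$ the reindexing $\ell=i+1$ identifies $A(\pi)$ with $\{1,\dots,n\}\setminus S_X(\pi)$, giving
\begin{align*}
\mathrm{maj}(\sigma)=\sum_{\ell\in\{1,\dots,n\}\setminus S_X(\pi)}(\ell-1)=\binom{n+1}{2}-\sum_{k=1}^{m}i_k-(n-m),
\end{align*}
while $B(\pi)=\{i<n:i\notin S_Y(\pi)\}$ is exactly the set of non-maxima $\{1,\dots,n\}\setminus S_Y(\pi)$, so that $\mathrm{maj}(\sigma^{-1})=\binom{n+1}{2}-\sum_k j_k$. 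Adding these two reproduces precisely the expression for $\mathrm{maj}(\pi)$ found above, which proves the theorem.

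Since everything reduces to elementary summation, there is no deep obstacle; the one point requiring care is the bookkeeping in the last step — correctly absorbing the shift $i\mapsto i+1$ in the definition of $A(\pi)$ and invoking $1\in S_X(\pi)$, $n\in S_Y(\pi)$ to identify $A(\pi)$ and $B(\pi)$ with the complements of the minima and of the maxima. Getting these boundary conventions right is what makes the three expressions line up, so I would double-check the edge cases (the block containing $1$ and the block containing $n$) before concluding.
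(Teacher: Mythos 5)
Your proposal is correct and follows essentially the same route as the paper's proof: both sides are expressed in terms of $\sum_k i_k$, $\sum_k j_k$, and $\mathtt{rk}(\pi)=n-m$ by summing the symmetrized block weight $2\sum_{j\in B}j-\min B-\max B-|B|+1$ and invoking Lemma \ref{lemma:sigmainv}. The only difference is that you spell out the derivation of the closed forms for $\mathrm{maj}(\sigma)$ and $\mathrm{maj}(\sigma^{-1})$ from the sets $A(\pi)$ and $B(\pi)$ (via $1\in S_X(\pi)$, $n\in S_Y(\pi)$ and the shift $i\mapsto i+1$), which the paper merely asserts; your bookkeeping there is accurate.
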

\begin{proof}
From Lemma \ref{lemma:sigmainv}, we have 
\begin{align*}
\mathrm{maj}(\sigma)&=\genfrac{}{}{}{}{1}{2}n(n+1)-\sum_{i=1}^{m}\min B_{i}-\mathtt{rk}(\pi),\\
\mathrm{maj}(\sigma^{-1})&=\genfrac{}{}{}{}{1}{2}n(n+1)-\sum_{i=1}^{m}\max B_{i},
\end{align*}
where $\mathtt{rk}(\pi)$ is the rank of $\pi$.
We compare these with Eq. (\ref{eq:wtBi}), namely, we have 
\begin{align*}
\mathrm{wt}(\pi)&=\sum_{i=1}^{m}\mathrm{wt}(B_{i}), \\
&=\sum_{i=1}^{m}\left(2\sum_{k=1}^{d(i)}j_{k}^{i}-\min B_{i}-\max B_{i} -d(i)+1\right), \\
&=n(n+1)-\sum_{i=1}^{m}(\min B_i+\max B_{i})-\mathtt{rk}(\pi), \\
&=\mathrm{maj}(\sigma)+\mathrm{maj}(\sigma^{-1}),
\end{align*}
where $B_{i}=(j_{1}^{i},\ldots,j_{d(i)}^{i})$ is a block and we have used $\mathtt{rk}(\pi)=\sum_{i=1}^{m}(d(i)-1)$.
This completes the proof.
\end{proof}

\begin{example}
We consider a non-crossing partition $\pi=158/24/3/67$.
The major index of $\pi$ is $\mathrm{maj}(\pi)=17+5+12=34$.
We have $S_{X}(\pi)=(1,2,3,6)$ and $S_{Y}(\pi)=(3,4,7,8)$.
Then, the set of the positions of peaks and valleys are 
given by $S_{P}(\pi)=\{3,5,9,13\}$ and $S_{V}(\pi)=\{4,6,12\}$.
We obtain the Dyck path $d(\pi)=0001.01.000111.0111$.
From this, we have $\mu=63452178$, which gives 
$\sigma=34765821$ and $\sigma^{-1}=87125436$.
The major indices of $\sigma$ and $\sigma^{-1}$ are given by
$\mathrm{maj}(\sigma)=3+4+6+7=20$ and 
$\mathrm{maj}(\sigma^{-1})=1+2+5+6=14$.
Therefore, we have $\mathrm{maj}(\pi)=34=\mathrm{maj}(\sigma)+\mathrm{maj}(\sigma^{-1})$.
\end{example}

We define the generating function $\mathcal{A}(q,t)$ by 
\begin{align*}
\mathcal{A}(q,t):=\sum_{\sigma\in S(312)}q^{\mathrm{maj}(\sigma)}t^{\mathrm{maj}(\sigma^{-1})}.
\end{align*}

The following is a direct consequence of Theorem \ref{thrm:NCqt}.
\begin{cor}
\label{cor:Aqq}
We have 
\begin{align*}
\mathcal{A}(q,q)=\genfrac{}{}{}{}{1}{[n+1]}\genfrac{[}{]}{0pt}{}{2n}{n}.
\end{align*}
\end{cor}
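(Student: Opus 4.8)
The plan is to derive Corollary~\ref{cor:Aqq} directly from Theorem~\ref{thrm:NCqt} together with Theorem~\ref{thrm:NCtoCat}. The key observation is that the bijection $\beta:\mathtt{NC}(n)\xrightarrow{\sim}\mathcal{S}(312)$, $\pi\mapsto\sigma$, constructed via Eq.~(\ref{eq:pitosigma}) is, as stated, a bijection between the two sets. Therefore, setting $t=q$ in the definition of $\mathcal{A}(q,t)$ collapses the two-variable weight into a single exponent, and I can transfer the sum from $312$-avoiding permutations back to non-crossing partitions.

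Concretely, first I would write out
\begin{align*}
\mathcal{A}(q,q)=\sum_{\sigma\in\mathcal{S}(312)}q^{\mathrm{maj}(\sigma)+\mathrm{maj}(\sigma^{-1})}.
\end{align*}
Next, using that $\beta$ is a bijection, I reindex the sum over $\sigma=\beta(\pi)$ with $\pi$ ranging over $\mathtt{NC}(n)$. Then I apply Theorem~\ref{thrm:NCqt}, which asserts $\mathrm{maj}(\pi)=\mathrm{maj}(\sigma)+\mathrm{maj}(\sigma^{-1})$ for each corresponding pair, to rewrite the exponent as $\mathrm{maj}(\pi)$. This yields
\begin{align*}
\mathcal{A}(q,q)=\sum_{\pi\in\mathtt{NC}(n)}q^{\mathrm{maj}(\pi)}.
\end{align*}
Recalling that $\mathrm{maj}(\pi)=\sum_{B\in\pi}\mathrm{wt}(B)$ by the definition preceding the refinement subsection, the right-hand side is exactly $\sum_{\pi\in\mathtt{NC}(n)}\prod_{B\in\pi}q^{\mathrm{wt}(B)}$, which Theorem~\ref{thrm:NCtoCat} evaluates as the $q$-Catalan number $\frac{1}{[n+1]}\genfrac{[}{]}{0pt}{}{2n}{n}$.

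There is essentially no hard step here, since the substantive work has already been done in Theorem~\ref{thrm:NCqt} (the additivity identity) and Theorem~\ref{thrm:NCtoCat} (the evaluation of the major-index generating function on $\mathtt{NC}(n)$). The only point requiring a moment of care is ensuring that the map $\pi\mapsto\sigma$ is genuinely a bijection onto all of $\mathcal{S}(312)$ and not merely an injection, so that the reindexing of the sum is valid; this is guaranteed by the invertibility of each stage in Eq.~(\ref{eq:pitosigma}) (the peak/valley data recovers $d(\pi)$, the Lehmer code recovers $\mu$, and complementation and inversion are involutive). Given this, the corollary follows by a two-line composition of the two theorems, which is why it is labelled a direct consequence.
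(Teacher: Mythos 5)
Your proposal is correct and follows exactly the route the paper intends: the paper simply declares Corollary~\ref{cor:Aqq} a direct consequence of Theorem~\ref{thrm:NCqt}, and your argument fills in that composition by reindexing the sum over $\mathcal{S}(312)$ as a sum over $\mathtt{NC}(n)$ via the bijection of Eq.~(\ref{eq:pitosigma}) and then invoking Theorem~\ref{thrm:NCtoCat}. Your added remark about checking surjectivity of $\pi\mapsto\sigma$ is a reasonable point of care but does not constitute a departure from the paper's approach.
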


\begin{remark}
Two remarks are in order:
\begin{enumerate}
\item
If we define $\mathcal{A}'(q,t):=t^{\genfrac{(}{)}{0pt}{}{n}{2}}\mathcal{A}(q,t^{-1})$, 
the generating function $\mathcal{A}'(q,t)$ is the polynomial $\mathcal{A}(q,t)$ studied 
in \cite{Stu09}.
Although the polynomial $\mathcal{A}'(q,t)$ is not the same as the $q,t$-Catalan numbers introduced 
by A.~Garsia nad M.~Haiman in \cite{GarHai96}, they coincide with each other at $t=q^{-1}$.

\item
Corollary \ref{cor:Aqq} implies that we construct a bijection between $231$-avoiding 
permutations and non-crossing partitions which sends the sum of the major index and 
the inverse major index of a $231$-avoiding permutation to the major index of the
corresponding non-crossing partition.
\end{enumerate}
\end{remark}

\section{Dyck tilings and \texorpdfstring{$q$}{q}-Catalan number}
\label{sec:DT}
\subsection{Dyck tilings and \texorpdfstring{$231$}{231}-avoiding permutations}
In this section, we consider a relation between the $q$-Catalan numbers and 
combinatorial objects called Dyck tilings.

We introduce the notion of Dyck tilings following \cite{KW11,KMPW12,SZJ12}. 
A standard notion of a Dyck tiling is a tiling in an area between two Dyck paths.
We consider another type of Dyck tilings studied in \cite[Section 3]{S19}.

Let $\lambda_{b}=(01)^{n}$ and $\lambda_{t}=1^{2n}$ be two paths where $0$ 
(resp. $1$) stands for north-east (resp. south-east) step.
We put these two paths in the Cartesian coordinate such that $\lambda_{b}$
is starting from $(0,0)$ and ending at $(2n,0)$, and $\lambda_{t}$ is starting 
$(0,2n)$ and ending at $(2n,0)$.
We consider the region $R$ surrounded by three lines $\lambda_{b}$, $\lambda_{t}$ 
and $x=0$. We put squares which are rotated $45$ degrees in the region $R$.
We put positive integers from $1$ to $n$ on the squares from top to bottom in the line $x=0$.
We call a square with a label an anchor box.

To define a Dyck tiling in the region $R$, we introduce Dyck tiles first.
A skew shape is said to be a ribbon if it does not contain a two-by-two 
squares.
A Dyck tile is a ribbon such that if we connect the centers of the squares by a line, then 
the line is a Dyck path.
A size of a Dyck tile is defined to be the size of the Dyck path which characterizes the 
Dyck tile. 
For example, a unit square is a Dyck tile of size zero.
We call the Dyck tile of size zero a trivial Dyck tile, and other Dyck tiles are called 
non-trivial Dyck tiles.
We consider a tiling of the region $R$ by Dyck tiles which satisfies the following 
conditions:
\begin{enumerate}[($\diamondsuit$1)]
\item  The tiling is cover-inclusive. This means that if we move a Dyck tile downward 
by $(0,-2)$, then it is contained in another Dyck tile or below the path $\lambda_{b}$.
\end{enumerate}
The condition ($\diamondsuit$1) implies that we have only Dyck tiles which are characterized 
by zig-zag Dyck paths $(01)^{l}$ with some $l\ge0$ since the bottom path $\lambda_{b}$ is a 
zig-zag path. 

Let $D$ be a cover-inclusive Dyck tiling in the region $R$. 
We consider an Hermite history of $D$ following \cite{KMPW12,S19,S20}.
The boundary of a Dyck tile consists of up steps $(1,1)$ and down steps $(-1,1)$.
Given a Dyck tile $d$, we connect the left-most up step and the right-most up step
by a line such that it goes through inside of $d$.
We call the line inside of $d$ a trajectory.
We call the left-most up step an entry and the right-most up step an exit. 
Since $D$ is a cover-inclusive Dyck tiling, 
A exit of a Dyck tile shares the same edge as an entry of another Dyck tile, 
an up step in $\lambda_{b}$, or an up step (which may not be an entry) of another Dyck tile.
We connect trajectories of Dyck tiles into a longer trajectory if possible.
Recall that an anchor box is on the line $x=0$, and there is no Dyck tile north-west to it.
This means that one can start the trajectory from the entry of an anchor box and extend 
it by the above mentioned way.
Suppose that the trajectory starting from an anchor labeled $i$ box ends at an up step of $\lambda_{b}$.
Then, we write $i$ below the corresponding peak of $\lambda_{b}$.
If the trajectory starting from an anchor box labeled $i$ stops at an up step $u$ of a Dyck tile, 
we put $i$ at the peak of $\lambda_{b}$ which is downward $u$.
In this way, we put integers $[1,n]$ at the peaks of $\lambda_{b}$. 
If we read the integers at the peaks of $\lambda_{b}$ from right to left, we 
obtain a permutation $w$.
This gives a correspondence between a Dyck tiling in $R$ and a permutation.
Conversely, Let $w=w_1\ldots w_{n}$ be a permutation of length $n$.
Then, since $\lambda_{b}$ has $n$ peaks, we put the integer $w_{i}$ below the
$i$-th peak from right to left. 
Then, we obtain a Dyck tiling by constructing a trajectory from an up step of $\lambda_{b}$ labeled $i$
to an anchor box labeled $i$.
Therefore, we have a bijection between a Dyck tiling in $R$ and a permutation.

\begin{figure}[ht]
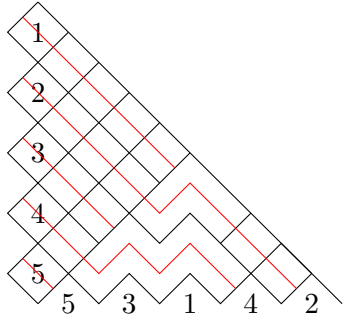

\tikzpic{-0.5}{[scale=0.4]
\draw(0,0)--(1,1)--(2,0)--(3,1)--(4,0)--(5,1)--(6,0)--(7,1)--(8,0)--(9,1)--(10,0);
\draw(0,0)--(-1,1)--(0,2)--(-1,3)--(0,4)--(-1,5)--(0,6)--(-1,7)--(0,8)--(-1,9)--(0,10);
\draw(0,2)--(1,1)(0,4)--(2,2)(0,6)--(3,3)(0,8)--(4,4)(0,10)--(9,1);
\draw(1,1)--(3,3)--(4,2)--(5,3)--(7,1)--(8,2)--(9,1)(0,2)--(4,6)(0,4)--(3,7)(0,6)--(2,8)(0,8)--(1,9);
\draw(3,3)--(5,5)(6,2)--(7,3);
\draw(0,1)node{$5$}(0,3)node{$4$}(0,5)node{$3$}(0,7)node{$2$}(0,9)node{$1$};
\draw[red](-0.5,1.5)--(0.5,0.5)(-0.5,3.5)--(2,1)--(3,2)--(4,1)--(5,2)--(6.5,0.5);
\draw[red](-0.5,5.5)--(2.5,2.5)(-0.5,7.5)--(4,3)--(5,4)--(8.5,0.5)(-0.5,9.5)--(4.5,4.5);
\draw(1,0)node{$5$}(3,0)node{$3$}(5,0)node{$1$}(7,0)node{$4$}(9,0)node{$2$};
}
\caption{An example of a Dyck tiling in the region $R$ and a permutation}
\label{fig:DyckR}
\end{figure}
An example of a Dyck tiling for the permutation $24135$ is given in 
Figure \ref{fig:DyckR}.
The red lines in Dyck tiles are trajectories.

To relate a Dyck tiling in $R$ to the major index of a Dyck path,
we introduce a bijection $\rho$ between $231$-avoiding permutation and 
a Dyck path. This bijection $\rho$ is different from the bijection $\kappa$
introduced in Section \ref{sec:bij231}.
We study the bijection $\rho$ following \cite{Stu09}.

Let $\pi:=\pi_1\ldots\pi_{n}\in\mathfrak{S}_{n}(231)$ be a $231$-avoiding permutation.
We define the descent set $\mathrm{Des}(\pi)$ of $\pi$ by
\begin{align}
\label{eq:Despi2}
\mathrm{Des}(\pi)=\{i: \pi_{i}>\pi_{i+1}\}\cup\{n\}.
\end{align}
Similarly, we define the descent set $\mathrm{iDes}(\pi)$ of $\pi^{-1}$ by 
$\mathrm{iDes}(\pi):=\mathrm{Des}(\pi^{-1})$.

\begin{remark}
Note that the descent set $\mathrm{Des}(\pi)$ contains $n$ and this is different 
from the standard definition of the descent set. 
\end{remark}

\begin{lemma}[Corollary 3.2 in \cite{Stu09}]
\label{lemma:DesiDes}
We have $|\mathrm{Des}(\pi)|=|\mathrm{iDes}(\pi)|$ if $\pi$ is a $231$-avoiding permutation.
\end{lemma}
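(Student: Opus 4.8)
The plan is to reduce the statement to the identity $\mathrm{des}(\pi)=\mathrm{des}(\pi^{-1})$ for the ordinary numbers of descents, and then to prove this by induction on $n$ using the recursive structure of $231$-avoiding permutations. First I would note that both $\mathrm{Des}(\pi)$ and $\mathrm{iDes}(\pi)=\mathrm{Des}(\pi^{-1})$ are defined with the extra element $n$ adjoined by Eq.~(\ref{eq:Despi2}), and that $n$ can never belong to an ordinary descent set, which lies in $[1,n-1]$. Hence $|\mathrm{Des}(\pi)|=\mathrm{des}(\pi)+1$ and $|\mathrm{iDes}(\pi)|=\mathrm{des}(\pi^{-1})+1$, where $\mathrm{des}$ counts ordinary descents, so the lemma is equivalent to $\mathrm{des}(\pi)=\mathrm{des}(\pi^{-1})$. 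It is convenient to rephrase $\mathrm{des}(\pi^{-1})$ combinatorially: since $i\in\mathrm{Des}(\pi^{-1})$ precisely when the position of the value $i$ exceeds that of $i+1$, the quantity $\mathrm{des}(\pi^{-1})$ equals the number of values $v\in[1,n-1]$ such that $v+1$ occurs to the left of $v$ in $\pi$.

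Next I would exploit the decomposition of a $231$-avoiding permutation around its largest entry. If $n$ sits at position $k$, then $231$-avoidance forces every entry left of $n$ to be smaller than every entry right of $n$: otherwise a value $a$ at position $i<k$ together with a value $b<a$ at position $l>k$ would produce $\pi_{l}=b<\pi_{i}=a<\pi_{k}=n$, an occurrence of the pattern $231$. Therefore $\pi=\alpha\,n\,\beta$, where $\alpha$ is a $231$-avoiding permutation of $\{1,\ldots,k-1\}$ occupying positions $1,\ldots,k-1$ and $\beta$ is a $231$-avoiding permutation of $\{k,\ldots,n-1\}$ occupying positions $k+1,\ldots,n$. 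Reading off the descents of $\pi$, the step from $\alpha$'s last entry to $n$ is always an ascent, while the step from $n$ into $\beta$ is a descent exactly when $\beta$ is nonempty, so
\begin{align*}
\mathrm{des}(\pi)=\mathrm{des}(\alpha)+\mathrm{des}(\beta)+\delta(\beta\neq\emptyset).
\end{align*}

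For the inverse count I would tally consecutive value-pairs $(v,v+1)$ with $v+1$ to the left of $v$. Pairs with both $v,v+1$ inside $\alpha$, respectively inside $\beta$, reproduce $\mathrm{des}(\alpha^{-1})$ and $\mathrm{des}(\beta^{-1})$ after standardization; the straddling pair $(k-1,k)$ never contributes because $k\in\beta$ lies to the right of $k-1\in\alpha$; and the top pair $(n-1,n)$ contributes exactly once when $\beta$ is nonempty, since then $n$ (at position $k$) lies to the left of $n-1\in\beta$. This gives the matching recursion
\begin{align*}
\mathrm{des}(\pi^{-1})=\mathrm{des}(\alpha^{-1})+\mathrm{des}(\beta^{-1})+\delta(\beta\neq\emptyset).
\end{align*}
Since $\alpha$ and $\beta$ are shorter $231$-avoiding permutations, the inductive hypothesis yields $\mathrm{des}(\alpha)=\mathrm{des}(\alpha^{-1})$ and $\mathrm{des}(\beta)=\mathrm{des}(\beta^{-1})$, and comparing the two displays closes the induction (the base case $n=1$ being trivial). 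I expect the only delicate point to be the bookkeeping of the two boundary pairs in the inverse count, namely the straddling pair $(k-1,k)$ and the top pair $(n-1,n)$; a related pitfall to avoid is inducting on $\pi^{-1}$ directly, which is only guaranteed to be $312$-avoiding rather than $231$-avoiding, so the recursion must be driven by the $231$-avoiding permutation $\pi$ itself.
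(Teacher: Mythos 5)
Your argument is correct, and it is worth noting that the paper itself offers no proof of this lemma at all: it is simply quoted as Corollary~3.2 of \cite{Stu09}. So there is no in-paper argument to compare against; what you have supplied is a self-contained, elementary replacement for the citation. Your reduction to $\mathrm{des}(\pi)=\mathrm{des}(\pi^{-1})$ is valid because the adjoined element $n$ of Eq.~(\ref{eq:Despi2}) can never be an ordinary descent, so it adds exactly one to each side. The decomposition $\pi=\alpha\,n\,\beta$ with $\alpha$ a permutation of $\{1,\ldots,k-1\}$ and $\beta$ of $\{k,\ldots,n-1\}$ is the standard structural consequence of $231$-avoidance, and both recursions are right; I checked the boundary bookkeeping you flagged, including the degenerate case $\beta=\emptyset$, where the straddling pair $(k-1,k)$ and the top pair $(n-1,n)$ coincide and correctly contribute nothing, and the case $\alpha=\emptyset$, where only the top pair contributes. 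Your closing caution about not inducting on $\pi^{-1}$ (which is $312$-avoiding, not $231$-avoiding) is also apt. The only cosmetic point is that $\mathrm{des}(\alpha^{-1})$ and $\mathrm{des}(\beta^{-1})$ should be read for the standardized permutations, which is harmless since counting consecutive-value pairs $(v,v+1)$ with $v+1$ to the left of $v$ is invariant under shifting all values by a constant.
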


Lemma \ref{lemma:DesiDes} plays an important role when we construct a Dyck path from 
the $231$-avoiding permutation $\pi$.
Let $\mathrm{Des}(\pi)=(i_1,\ldots,i_{m})$ and $\mathrm{iDes}(\pi)=(j_1,\ldots,j_{m})$
with some $m\ge0$.
We construct a Dyck path from $\mathrm{Des}(\pi)$ and $\mathrm{iDes}(\pi)$ from right to 
left as follows.
We first place $j_1$ $1$'s from right to left. Then, we append $i_1$ $0$'s from left to 
$1^{j_1}$. We append $j_2-j_1$ $1$'s from left, and successively append $i_2-i_1$ $0$'s 
from left. We continue to append $j_{k+1}-j_k$ $1$'s and $i_{k+1}-i_{k}$ $0$'s from 
left. Then, we have a Dyck path of size $2n$.

\begin{example}
Let $\pi=631245$. We have $\mathrm{Des}(\pi)=\{1,2,6\}$ and 
$\mathrm{iDes}(\pi)=\mathrm{Des}(342561)=\{2,5,6\}$.
Then, we have the Dyck path $00001.0111.011$.
\end{example}

\begin{defn}
\label{defn:rho}
We denote by $\rho:\mathfrak{S}_{n}(231)\xrightarrow{\sim}\mathcal{P}^1_{n}$, 
$\pi\mapsto \rho(\pi)$ the bijection defined above.
\end{defn}

\subsection{Major index on Dyck tilings}
In what follows, we define the major index on the Dyck tiling in the region $R$.
Recall that a Dyck tile $d$ is characterized by a Dyck path $p(d)$.
Suppose a Dyck tile $d$ consists of $N(d)$ unit squares.
We define the size $l(d)$ of a Dyck tile $d$ by
\begin{align*}
l(d):=
\begin{cases}
0, & \text{ if $d$ is a trivial Dyck tile}, \\
(N(d)+1)/2, & \text{ otherwise }.
\end{cases}
\end{align*}
Let $x(d)$ (resp. $y(d)$) be the $x$-coordinate (resp. $y$-coordinate) of 
the left-most point of the Dyck path $p(d)$. 
When the left-most box of a Dyck tile $d$ is an anchor box labeled $n$, 
we have $x(d)=0$ and $y(d)=1$.
We define the major index of a Dyck tile $d$ by
\begin{align}
\label{eq:majd}
\mathrm{maj}(d):=
\begin{cases}
0, & \text{ if $d$ is a trivial Dyck tile}, \\
l(d)+x(d)+(y(d)-1)/2, & \text{otherwise}.
\end{cases}
\end{align}
 
We define the major index of a Dyck tiling $D$ by
\begin{align*}
\mathrm{maj}(D):=\sum_{d\in D}\mathrm{maj}(d),
\end{align*}
where $d\in D$ means that a Dyck tile $d$ is in a Dyck tiling $D$.

In Figure \ref{fig:DyckR}, we have two non-trivial Dyck tiles of size $3$ and $2$.
The major index of the former is $3+2+0=5$, and that of the latter is $2+4+1=7$.
As a total, this Dyck tiling has the major index $5+7=12$.

The next theorem is the main result of this section.
\begin{theorem}
\label{thrm:majD}
Let $\pi\in\mathfrak{S}_{n}(231)$, $D(\pi)$ be the Dyck tiling corresponding to $\pi$, and 
$\rho$ be the bijection defined in Definition \ref{defn:rho}.
Then, we have 
\begin{align}
\label{eq:majDpi}
\mathrm{maj}(D(\pi))=\mathrm{maj}(\rho(\pi)).
\end{align}
\end{theorem}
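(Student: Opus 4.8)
The plan is to evaluate both sides in terms of the two descent sets $\mathrm{Des}(\pi)=\{i_1<\cdots<i_m\}$ and $\mathrm{iDes}(\pi)=\{j_1<\cdots<j_m\}$, which have a common cardinality $m$ by Lemma~\ref{lemma:DesiDes}, and then to check that the resulting expressions coincide. First I would reduce the right-hand side directly from Definition~\ref{defn:rho}. Read from left to right, the path $\rho(\pi)$ is the concatenation of blocks
\begin{align*}
0^{i_m-i_{m-1}}1^{j_m-j_{m-1}}\,0^{i_{m-1}-i_{m-2}}1^{j_{m-1}-j_{m-2}}\cdots 0^{i_1}1^{j_1},
\end{align*}
where $i_0=j_0=0$. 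Since both $\pi$ and $\pi^{-1}$ carry $n$ in their descent sets by \eqref{eq:Despi2}, we have $i_m=j_m=n$. A descent (a factor $10$) of this word occurs precisely at the right end of each $1$-block except the rightmost one, and its position equals the total length of the blocks to its left, which one computes to be $2n-i_l-j_l$ for $l=1,\dots,m-1$. Hence
\begin{align*}
\mathrm{maj}(\rho(\pi))=\sum_{l=1}^{m-1}\bigl(2n-i_l-j_l\bigr).
\end{align*}

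The harder half is to show that $\mathrm{maj}(D(\pi))$ equals the same sum. I would analyze the Hermite history defining the correspondence between $D(\pi)$ and $\pi$: the trajectory issued from the anchor box labeled $\pi_i$ terminates below the peak of $\lambda_b$ carrying $\pi_i$, and the cover-inclusive condition ($\diamondsuit$1) forces every non-trivial tile to be a zig-zag ribbon $(01)^l$. The key step is to prove that, for $231$-avoiding $\pi$, the non-trivial tiles are indexed by $l=1,\dots,m-1$, and that the tile $d_l$ attached to the $l$-th internal descent has size $l(d_l)$ and leftmost endpoint $(x(d_l),y(d_l))$ satisfying
\begin{align*}
l(d_l)+x(d_l)+\tfrac12\bigl(y(d_l)-1\bigr)=2n-i_l-j_l.
\end{align*}
To extract these data I would track, for each internal descent, where the relevant trajectory starts and how far it runs: the coordinate $x(d_l)$ is read from the horizontal position of the associated peak of $\lambda_b$, the coordinate $y(d_l)$ from the height of the associated anchor box, and the size $l(d_l)$ from the length of the trajectory. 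Summing over $l$ and invoking \eqref{eq:majd} then yields $\mathrm{maj}(D(\pi))=\mathrm{maj}(\rho(\pi))$. It is exactly the $231$-avoidance that guarantees the trajectories do not interfere, so that the tile shapes are pinned down cleanly by the descent data.

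The main obstacle I anticipate is precisely this Hermite-history bookkeeping: establishing the bijection between internal descents and non-trivial tiles, and reading off the exact coordinates $x(d_l),y(d_l)$ and size $l(d_l)$ from the combinatorics of entries, exits, and the cover-inclusive condition. If a direct tile-by-descent identification proves too delicate, I would fall back on induction on $n$, peeling off the outermost structure of $\pi$ determined by the position of $n$ (which splits a $231$-avoiding permutation into two smaller $231$-avoiding permutations). Removing this structure deletes one outermost family of tiles and shifts the remaining tiles by a controlled amount; one then checks that the induced change in $\mathrm{maj}(D(\pi))$ matches the change in $\sum_{l}(2n-i_l-j_l)$ under the corresponding modification of $\mathrm{Des}(\pi)$ and $\mathrm{iDes}(\pi)$, closing the induction.
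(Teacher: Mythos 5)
Your evaluation of the right-hand side is correct and agrees with the paper: since $n\in\mathrm{Des}(\pi)$ and $n\in\mathrm{iDes}(\pi)$ by the convention \eqref{eq:Despi2}, the descent positions of the word $\rho(\pi)$ are exactly $2n-i_l-j_l$ for $l=1,\dots,m-1$, so $\mathrm{maj}(\rho(\pi))=\sum_{l=1}^{m-1}(2n-i_l-j_l)$, which is the paper's Eq.~\eqref{eq:majrhow}. The genuine gap is in your key step for the left-hand side: the claimed tile-by-tile identity $l(d_l)+x(d_l)+\tfrac12(y(d_l)-1)=2n-i_l-j_l$ is \emph{false}. Take $\pi=4312$ (see Figure~\ref{fig:bij}): here $\mathrm{Des}(\pi)=\{1,2,4\}$ and $\mathrm{iDes}(\pi)=\{2,3,4\}$, so the descent contributions are $2n-i_1-j_1=5$ and $2n-i_2-j_2=3$, while the two non-trivial tiles each have major index $4$ (one checks this from \eqref{eq:majd}, or from the paper's formula \eqref{eq:majdDyck} applied to $i=4$ and $i=3$). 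The multiset $\{4,4\}$ is not $\{5,3\}$; only the totals agree. The same failure occurs for $\pi=4213$, where the tiles carry $\{4,5\}$ but the descents contribute $\{6,3\}$. So there is no weight-preserving bijection between non-trivial tiles and internal descents, and your primary argument collapses at exactly the step you identified as the hard one.

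Your fallback — induction on $n$ — is essentially the route the paper takes, but as written it is a plan rather than a proof, and the substance of the theorem lives in the computation you defer. The paper peels off the entry $1$ (writing $w=w'1w''$, where $231$-avoidance forces $w'$ to be decreasing) rather than the entry $n$, computes via \eqref{eq:majdDyck} that $\mathrm{maj}(D(w))-\mathrm{maj}(D(w_{\downarrow}))$ equals $2n-2$ if $2\in w'$ and $\mathrm{pos}(1)-1$ otherwise, and then verifies by tracking how $\mathrm{Des}$ and $\mathrm{iDes}$ transform under $w\mapsto w_{\downarrow}$ that $\mathrm{maj}(\rho(w))$ changes by the same amount. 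If you pursue your version (peeling at $n$), you would need to carry out the analogous two computations explicitly; nothing in your sketch yet establishes that the two increments agree, and given that the finer tile-by-descent correspondence fails, this matching of increments cannot be taken for granted.
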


Before proceeding to the proof of Theorem \ref{thrm:majD}, 
we introduce two lemmas used later.

Let $w\in\mathfrak{S}_{n}(231)$. Then, we write $j\leftarrow i$ (resp. $i\rightarrow j$) 
if $j$ is left (resp. right) to $i$ in $w$.
We denote by $\mathrm{pos}(i)$ the position of $i$ from left in $w$, {\it i.e.}, 
$\mathrm{pos}(i)=w^{-1}(i)$.

\begin{lemma}
Suppose that a non-trivial Dyck tile $d$ belongs to a trajectory starting from 
the anchor box labeled $i$. 
Then, we have 
\begin{align}
\label{eq:majdDyck}
\mathrm{maj}(d)=
\begin{cases}
0, & \text{if} \quad \mathrm{pos}(i)=i+\#\{j>i : j\leftarrow i\}, \\
i+1-\mathrm{pos}(i)+2(n-i), &\text{otherwise}.
\end{cases}
\end{align}
\end{lemma}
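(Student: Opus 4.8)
The plan is to follow the single trajectory that issues from the anchor box labeled $i$ and to read off the three quantities in $\mathrm{maj}(d)=l(d)+x(d)+(y(d)-1)/2$ directly from the pair $(i,\mathrm{pos}(i))$. Put $R$ in coordinates so that the anchor box labeled $i$ is the diamond whose leftmost vertex is $(-1,2(n-i)+1)$ and so that the $k$-th peak of $\lambda_{b}$ from the left sits at $(2k-1,1)$. Reading the peak labels from right to left, the peak that carries the label $i$ is the $(n+1-\mathrm{pos}(i))$-th from the left. Two elementary features of the Hermite history drive the computation: across a trivial tile the trajectory is a single down-right segment, so it preserves the anti-diagonal value $x+y$ of the midpoints of the up-steps it crosses; and the trajectory can enter a non-trivial tile only at that tile's entry, i.e.\ its left-most up-step.

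First I would argue that, for $w\in\mathfrak{S}_{n}(231)$, the trajectory crosses \emph{at most one} non-trivial tile, so that $d$ is unambiguous, and that every up-step it crosses before reaching $d$ belongs to a trivial tile. Granting this, the midpoint of the entry up-step of the anchor box and the midpoint of the entry up-step of $d$ lie on one anti-diagonal, and since the former has $x+y=2(n-i)+1$ while the latter has $x+y=x(d)+y(d)$, we obtain the first relation
\begin{align*}
x(d)+y(d)=2(n-i)+1 .
\end{align*}
This uniqueness, together with the ``only trivial tiles before $d$'' statement, is exactly where $231$-avoidance and cover-inclusiveness enter: they force the non-trivial tiles into a nested configuration in which a trajectory, once it has left a non-trivial tile, never enters another.

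Next I would locate the peak that receives the label $i$. The right-most cell of $d$ is at $x(d)+2(l(d)-1)$, so the exit up-step of $d$ has its midpoint in the column of the $\bigl(\tfrac12 x(d)+l(d)\bigr)$-th peak from the left. After $d$ the trajectory again runs down-right through trivial tiles and halts in one of two ways: if $y(d)=1$ the exit up-step is already an up-step of $\lambda_{b}$, and if $y(d)>1$ it is a non-entry up-step of the tile nested directly beneath $d$, on which the trajectory stops and the label is dropped straight down. In both cases the receiving peak is the $\bigl(\tfrac12 x(d)+l(d)\bigr)$-th from the left, so equating with $n+1-\mathrm{pos}(i)$ gives the second relation
\begin{align*}
\tfrac12 x(d)+l(d)=n+1-\mathrm{pos}(i)
\end{align*}
(note $x(d)$ is even and $y(d)$ odd, so both sides are integers). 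Substituting $y(d)=2(n-i)+1-x(d)$ into $\mathrm{maj}(d)$ collapses it to $l(d)+\tfrac12 x(d)+(n-i)$, whence
\begin{align*}
\mathrm{maj}(d)=\bigl(n+1-\mathrm{pos}(i)\bigr)+(n-i)=i+1-\mathrm{pos}(i)+2(n-i),
\end{align*}
the asserted value.

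The remaining, degenerate branch is the case in which the trajectory from anchor $i$ crosses no non-trivial tile at all, so no such $d$ exists and the contribution is $0$. I would characterize this by showing that anchor $i$ neither heads nor enters a non-trivial tile precisely when all of $1,\dots,i-1$ lie to the left of $i$ in $w$, that is $\#\{j<i:\ j\leftarrow i\}=i-1$; writing $\mathrm{pos}(i)=1+\#\{j<i:\ j\leftarrow i\}+\#\{j>i:\ j\leftarrow i\}$ rewrites this as the stated condition $\mathrm{pos}(i)=i+\#\{j>i:\ j\leftarrow i\}$. The main obstacle is the structural input underlying the first relation and this degeneracy test, namely proving that for $231$-avoiding $w$ each trajectory meets at most one non-trivial tile and deciding when anchor $i$ stays within trivial tiles. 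I expect to extract this from the recursive description of $231$-avoiding permutations, splitting $w$ at its largest entry $n$ so that the block to the left of $n$ uses the smallest values; this decomposition matches the nesting of the tiling and supports an induction on $n$, after which the two relations and the final substitution are routine.
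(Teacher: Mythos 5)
Your computation is the paper's computation in different packaging: the paper writes down $l(d)=i+1-\mathrm{pos}(i)+\#\{j>i:j\leftarrow i\}$, $x(d)=2\,\#\{j>i:i\rightarrow j\}$ and $(y(d)-1)/2=\#\{j>i:j\leftarrow i\}$ separately and adds them, while your two relations $x(d)+y(d)=2(n-i)+1$ and $\tfrac12 x(d)+l(d)=n+1-\mathrm{pos}(i)$ are exactly the two linear combinations of those identities that the sum $l(d)+x(d)+(y(d)-1)/2$ requires; your degenerate-case criterion $\#\{j<i:j\leftarrow i\}=i-1$ is likewise an equivalent rewriting of the stated condition. So the route is essentially the same as the paper's.

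One point needs repair. You locate the role of $231$-avoidance in the claim that a trajectory meets at most one non-trivial tile; that claim in fact holds for \emph{every} permutation (the paper derives it from cover-inclusiveness over the zig-zag bottom path, which forces all tiles to be zig-zag tiles), as the non-$231$-avoiding example $w=24135$ of Figure \ref{fig:DyckR} already illustrates. Where $231$-avoidance genuinely enters is in your second relation: you assert that when $y(d)>1$ the trajectory halts at the exit of $d$ because that edge is a non-entry up-step of the tile beneath, so that the label drops straight down to the $\bigl(\tfrac12 x(d)+l(d)\bigr)$-th peak. For $w=24135$ and $i=2$ this fails: the trajectory continues past the non-trivial tile through trivial tiles all the way to $\lambda_{b}$, the label lands $\tfrac{y(d)-1}{2}$ peaks further to the right, and the formula of the lemma breaks ($\mathrm{maj}(d_2)=2+4+1=7$, whereas the ``otherwise'' branch would give $8$). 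Thus the halting behaviour --- equivalently the identity $(y(d)-1)/2=\#\{j>i:j\leftarrow i\}$ --- is precisely the structural fact that your deferred induction on the decomposition of $231$-avoiding permutations must deliver; as written, your sketch aims that induction at a statement that is true unconditionally and leaves the second relation, the one that actually uses the hypothesis, unsupported.
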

\begin{proof}
We consider the Dyck tiling corresponding to $w\in\mathfrak{S}_{n}(231)$.
Recall that, in a Dyck tiling, a trajectory starts from an anchor box labeled $i$ and ends with some point.
Since the bottom Dyck path is a zig-zag path, each trajectory contains at most one non-trivial Dyck tiles.
We compute the major index for the Dyck tile $d_{i}$ which is in a trajectory labeled $i$ by Eq. (\ref{eq:majd}).
Define the number $h(i)$ by 
\begin{align*}
h(i):=n+1-\mathrm{pos}(i)+\#\{j>i: j\leftarrow i \text{ in } w\}.
\end{align*}
Then, the trajectory labeled $i$ ends at the $h(i)$-th up step from left in $\lambda_{b}$.
It is obvious that if $h(i)=n+1-i$, equivalently $\mathrm{pos}(i)=i+\#\{j>i: j\leftarrow i \text{ in } w\}$, 
then we have no non-trivial Dyck tiles in the trajectory labeled $i$. In this case, we have 
$\mathrm{maj}(d_{i})=0$.
Suppose that $h(i)\neq n+1-i$. 
Then, the size $l(d_i)$ of the Dyck tile is given by 
\begin{align*}
l(d_{i})&=h(i)-(n+1-i)+1,\\
&=i+1-\mathrm{pos}(i)+\#\{j>i: j\leftarrow i \text{ in } w\}.
\end{align*}
The $x$-coordinate of $d_{i}$, and the $y$-coordinate of $d_{i}$ are given by
\begin{align*}
x(d)&=2\cdot \#\{j>i : i\rightarrow j \text{ in } w\}, \\
(y(d)-1)/2&=\#\{j>i: j\leftarrow i \text{ in } w\}.
\end{align*}
From Eq. (\ref{eq:majd}), we have Eq. (\ref{eq:majdDyck}).
\end{proof}

\begin{lemma}
Let $w\in\mathfrak{S}_{n}(231)$, $\mathrm{Des}(w)=\{i_1,\ldots,i_{m}\}$
and $\mathrm{iDes}(w)=\{j_1,\ldots,j_{m}\}$.
Then, we have 
\begin{align}
\label{eq:majrhow}
\mathrm{maj}(\rho(w))=\sum_{k=1}^{m-1}\{(j_m-j_{k})+(i_m-i_{k})\}.
\end{align}
\end{lemma}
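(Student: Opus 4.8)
The plan is to turn the recursive description of $\rho$ into an explicit block decomposition of the word $\rho(w)$, and then read its descents off by inspection. Unwinding the right-to-left construction of $\rho(w)$ from $\mathrm{Des}(w)$ and $\mathrm{iDes}(w)$, I would first record that, read from left to right, the word factors as
\begin{align*}
\rho(w) = A_1 B_1 A_2 B_2 \cdots A_m B_m, \qquad A_k = 0^{\,i_{m-k+1}-i_{m-k}}, \quad B_k = 1^{\,j_{m-k+1}-j_{m-k}},
\end{align*}
with the conventions $i_0 = j_0 = 0$. Here the blocks of $1$'s appended in the construction become $B_m, B_{m-1}, \ldots, B_1$ and the blocks of $0$'s become $A_m, \ldots, A_1$. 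Since the modified descent set contains $n$, we have $i_m = j_m = n$, so the $0$-blocks have total length $i_m = n$ and the $1$-blocks total length $j_m = n$, confirming that $\rho(w)$ is a Dyck word of length $2n$ ending in $1$. I would check this factorization against the worked example $\pi = 631245$ in the excerpt to be sure the index reversal is correct.

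Next I would locate the descents. Because $0 = i_0 < i_1 < \cdots < i_m$ and $0 = j_0 < j_1 < \cdots < j_m$ are strictly increasing, every block $A_k$ and $B_k$ is nonempty. A descent is an occurrence of the pattern $10$, and in this alternating block word such patterns occur only at the junctions $B_k A_{k+1}$; the junctions $A_k B_k$ give the ascending pattern $01$, and the terminal block $B_m$ is followed by nothing. Hence there are exactly $m-1$ descents, one at the last letter of each $B_k$ for $1 \le k \le m-1$.

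I would then compute the descent positions. The position of the final letter of $B_k$ is the total length of $A_1 B_1 \cdots A_k B_k$, which telescopes to
\begin{align*}
\sum_{l=1}^{k}(i_{m-l+1}-i_{m-l}) + \sum_{l=1}^{k}(j_{m-l+1}-j_{m-l}) = (i_m - i_{m-k}) + (j_m - j_{m-k}).
\end{align*}
Summing over $k = 1, \ldots, m-1$ and then substituting $k \mapsto m-k$, which merely permutes the index set $\{1, \ldots, m-1\}$, yields
\begin{align*}
\mathrm{maj}(\rho(w)) = \sum_{k=1}^{m-1}\bigl[(i_m - i_{m-k}) + (j_m - j_{m-k})\bigr] = \sum_{k=1}^{m-1}\bigl[(i_m - i_k) + (j_m - j_k)\bigr],
\end{align*}
which is Eq. (\ref{eq:majrhow}).

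I expect the main obstacle to be purely organizational rather than conceptual: correctly inverting the right-to-left appending procedure into a left-to-right factorization, and not conflating the block index $k$ with the descent value $m-k$ during the two reversals. The substantive facts making the argument work are that $n$ belongs to both $\mathrm{Des}(w)$ and $\mathrm{iDes}(w)$ (forcing $i_m = j_m = n$) and that strict monotonicity of the two descent sequences keeps every block nonempty, so that no descent is accidentally merged or lost; Lemma \ref{lemma:DesiDes} is what guarantees the two sequences have the same length $m$ in the first place. Finally I would verify the degenerate case $m = 1$, where $\rho(w) = 0^n 1^n$ has no descent and both sides of the formula vanish.
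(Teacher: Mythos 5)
Your proposal is correct and follows essentially the same route as the paper: both decompose $\rho(w)$ into the alternating blocks $0^{i_{m}-i_{m-1}}1^{j_m-j_{m-1}}\cdots 0^{i_1}1^{j_1}$ and read the $m-1$ descents off the $1\to 0$ junctions, with positions telescoping to $(i_m-i_k)+(j_m-j_k)$. Your write-up merely supplies the bookkeeping (nonemptiness of blocks, the index reversal, the check against the example $631245$) that the paper leaves implicit.
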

\begin{proof}
By the construction of $\rho$, the Dyck path $p:=\rho(w)$ starts with 
$i_m-i_{m-1}$ $0$'s, followed by $j_{m}-j_{m-1}$ $1$'s, followed 
by $i_{m-1}-i_{m-2}$ $0$'s, followed by $i_{m-1}-i_{m-2}$ $1$'s, and so on.
A descent $i$ satisfies $p_{i}=1$ and $p_{i+1}=0$.
So, it is easy to see that the sum of descents is given by Eq. (\ref{eq:majrhow}).
\end{proof}

\begin{proof}[Proof of Theorem \ref{thrm:majD}]
We prove the theorem by induction on $n$. For $n=1$ and $n=2$, it is easy to see
that Eq. (\ref{eq:majDpi}) holds true.
We assume that Eq. (\ref{eq:majDpi}) holds true up to $n-1$.
The permutation $w\in\mathfrak{S}_{n}(231)$ is written as a concatenation 
$w=w'1w''$ in one-line notation.
Let $w_{\downarrow}$ be a permutation of size $n-1$ obtained from $w$ by deleting 
$1$ and decease all the integers by one. 
By its definition, an integer $i$ in $w_{\downarrow}$ corresponds to the integer $i+1$ in $w$.

We first compute the major index of the Dyck tiling in the trajectory labeled $i+1$ where 
the integer $i+1$ appears in $w''$.
Suppose that the integer $i+1$ satisfies $\mathrm{pos}(i+1)=i+1+\#\{j>i+1:j\leftarrow i+1\}$ in $w$.
Then, the integer $i$ in $w_{\downarrow}$ also satisfies $\mathrm{pos}(i)=i+\#\{j>i:j\leftarrow i\}$.
This means that if $i+1$ is right to $1$ in $w$ and the major index of the Dyck tiles in the trajectory 
labeled $i+1$ is zero, then the major index of the Dyck tiles in the trajectory labeled $i$ in $w_{\downarrow}$ 
is also zero.
Secondly, we compute the major indexes for the Dyck tiles in the trajectories labeled $i+1$ in $w'$.
Since $w$ is a $231$-avoiding permutation, it is easy to see that $w'$ is a decreasing sequence.
Suppose $i+1$ appears in $w'$. Then, we have $\#\{j>i: j\leftarrow i\}=\mathrm{pos}(i+1)-1$, 
which implies that $\mathrm{pos}(i+1)\neq i+1+\#\{j>i: j\leftarrow i\}$.
Therefore, there is a non-trivial Dyck tile $d_{i+1}$ in the trajectory labeled $i+1$.
From Eq. (\ref{eq:majdDyck}), we have 
\begin{align*}
\mathrm{maj}(d_{i+1})=i+2-\mathrm{pos}(i+1)+2(n-i-1).
\end{align*}
We compute the major index of a non-trivial Dyck tile $d'_{i}$ in the trajectory 
labeled $i$ in $w_{\downarrow}$:
\begin{align*}
\mathrm{maj}(d'_{i})=i+1-\mathrm{pos}(i)+2(n-1-i).
\end{align*}
Since the position of $i+1>3$ in $w$ is the same as that of $i$ in $w_{\downarrow}$, we have 
$\mathrm{maj}(d_{i+1})=\mathrm{maj}(d'_{i})+1$. 
If $2$ is in $w'$, then $\mathrm{pos}(2)=\mathrm{pos}(1)-1$ in $w$.
This means that the major index of the Dyck tile in the trajectory labeled $2$ 
is given by $2n-\mathrm{pos}(1)$.
From these observations, we have 
\begin{align}
\label{eq:majDwdif}
\mathrm{maj}(D(w))=\mathrm{maj}(D(w_{\downarrow}))
+
\begin{cases}
2n-2, & \text{ if $2$ is in $w'$}, \\
\mathrm{pos}(1)-1, & \text{ otherwise}.
\end{cases}
\end{align}

We compute the major index for the Dyck path $\rho(w)$ by use of $\mathrm{Des}(w)$ and $\mathrm{iDes}(w)$.
Given an integer $k$, we define $\mathrm{lDes}^{k}(w):=\mathrm{Des}(w)\cap[1,k]$ and 
$\mathrm{rDes}^{k}(w):=\mathrm{Des}(w)\cap[k+1,n]$.
By definition, we have $\mathrm{Des}(w)=\mathrm{lDes}^{k}(w)\cup\mathrm{rDes}^{k}(w)$.
Let $k=\mathrm{pos}(1)=w^{-1}(1)$.
It is easy to see that 
\begin{align*}
\mathrm{lDes}^{k}(w)=
\begin{cases}
\mathrm{lDes}^{k-1}(w_{\downarrow})\cup\{k-1\}, & \text{ if $2$ is in $w'$}, \\
\mathrm{lDes}^{k-1}(w_{\downarrow}), & \text{otherwise}.
\end{cases}
\end{align*}
Similarly, we have 
\begin{align*}
\mathrm{rDes}^{k}(w)=\{i+1: i\in\mathrm{rDes}^{k-1}(w_{\downarrow})\}.
\end{align*}
For $\mathrm{iDes}(w)=\mathrm{Des}(w^{-1})$, we have 
\begin{align*}
\mathrm{iDes}(w)=
\begin{cases}
\{i+1: i\in\mathrm{iDes}(w_{\downarrow})\}\cup\{1\}, & \text{ if $2$ is in $w'$}, \\
\{i+1: i\in\mathrm{iDes}(w_{\downarrow})\}, & \text{otherwise}. 
\end{cases}
\end{align*}
Recall $\mathrm{Des}(\pi)$ is defined in Eq. (\ref{eq:Despi2}).
By combining them by use of Eq. (\ref{eq:majrhow}), we have 
\begin{align}
\label{eq:majrhowdif}
\mathrm{maj}(\rho(w))=\mathrm{maj}(\rho(w_{\downarrow}))+
\begin{cases}
2n-2, & \text{ if $2$ is in $w'$}, \\
\mathrm{pos}(1)-1, & \text{otherwise},
\end{cases}
\end{align}
where we have used $|\mathrm{lDes}^{k-1}(w_{\downarrow})|=\mathrm{pos}(1)-2$.
From Eqs. (\ref{eq:majDwdif}) and (\ref{eq:majrhowdif}), we have 
$\mathrm{maj}(D(w))=\mathrm{maj}(\rho(w))$ if and only if 
$\mathrm{maj}(D(w_{\downarrow}))=\mathrm{maj}(\rho(w_{\downarrow}))$.
By use of induction hypothesis, we have Eq.(\ref{eq:majDpi}), which 
completes the proof.
\end{proof}

\begin{figure}	
\centering
\begin{tabular}{c|cccc} \hline
$S(231)$ & $1234$ & $4123$  & $1423$  &  $1243$    \\ \hline
Dyck path & 
\tikzpic{-0.5}{[scale=0.3, show background rectangle,background rectangle/.style={fill=none},inner frame sep=1mm]
\draw(0,0)--(4,4)--(8,0);
\draw[dashed](1,1)--(2,0)--(5,3)(2,2)--(4,0)--(6,2)(3,3)--(6,0)--(7,1);
}
& \tikzpic{-0.5}{[scale=0.3, show background rectangle,background rectangle/.style={fill=none},inner frame sep=1mm]
\draw(0,0)--(3,3)--(4,2)--(5,3)--(8,0);
\draw[dashed](1,1)--(2,0)--(4,2)(2,2)--(4,0)--(6,2)(4,2)--(6,0)--(7,1);
}
& 
\tikzpic{-0.5}{[scale=0.3, show background rectangle,background rectangle/.style={fill=none},inner frame sep=1mm]
\draw(0,0)--(2,2)--(3,1)--(5,3)--(8,0);
\draw[dashed](1,1)--(2,0)--(3,1)--(4,0)--(6,2)(4,2)--(6,0)--(7,1);
}&
\tikzpic{-0.5}{[scale=0.3, show background rectangle,background rectangle/.style={fill=none},inner frame sep=1mm]
\draw(0,0)--(1,1)--(2,0)--(5,3)--(8,0);
\draw[dashed](3,1)--(4,0)--(6,2)(4,2)--(6,0)--(7,1);
}  \\ \hline
Dyck tiling 
&
\tikzpic{-0.5}{[scale=0.3, show background rectangle,background rectangle/.style={fill=none},inner frame sep=1mm]
\draw(0,0)--(1,1)--(2,0)--(3,1)--(4,0)--(5,1)--(6,0)--(7,1)--(8,0);
\draw(0,0)--(-1,1)--(0,2)--(-1,3)--(0,4)--(-1,5)--(0,6)--(-1,7)--(0,8)--(7,1);
\draw(7,0)node{$1$}(5,0)node{$2$}(3,0)node{$3$}(1,0)node{$4$};
\draw(1,1)--(0,2)(3,1)--(0,4)(5,1)--(0,6)(7,1)--(0,8);
\draw(1,1)--(4,4)(3,1)--(5,3)(5,1)--(6,2);
\draw(0,2)--(3,5)(0,4)--(2,6)(0,6)--(1,7);
} 	
&
\tikzpic{-0.5}{[scale=0.3, show background rectangle,background rectangle/.style={fill=none},inner frame sep=1mm]
\draw(0,0)--(1,1)--(2,0)--(3,1)--(4,0)--(5,1)--(6,0)--(7,1)--(8,0);
\draw(0,0)--(-1,1)--(0,2)--(-1,3)--(0,4)--(-1,5)--(0,6)--(-1,7)--(0,8)--(7,1);
\draw(7,0)node{$4$}(5,0)node{$1$}(3,0)node{$2$}(1,0)node{$3$};
\draw(5,3)--(4,2)--(3,3)--(2,2)--(1,3)--(0,2);
\draw(1,3)--(0,4)(3,3)--(0,6)(0,6)--(1,7)(0,4)--(2,6)(1,3)--(3,5)(3,3)--(4,4);
\draw(3,1.8)node{$4$};
} 
&
\tikzpic{-0.5}{[scale=0.3, show background rectangle,background rectangle/.style={fill=none},inner frame sep=1mm]
\draw(0,0)--(1,1)--(2,0)--(3,1)--(4,0)--(5,1)--(6,0)--(7,1)--(8,0);
\draw(0,0)--(-1,1)--(0,2)--(-1,3)--(0,4)--(-1,5)--(0,6)--(-1,7)--(0,8)--(7,1);
\draw(7,0)node{$1$}(5,0)node{$4$}(3,0)node{$2$}(1,0)node{$3$};
\draw(5,1)--(3,3)--(2,2)--(1,3)--(0,2);
\draw(0,6)--(3,3)--(4,4)(0,4)--(1,3)--(3,5)(0,6)--(1,7)(0,4)--(2,6)(4,2)--(5,3)(5,1)--(6,2);
\draw(2,1.3)node{$3$};
} 
&
\tikzpic{-0.5}{[scale=0.3, show background rectangle,background rectangle/.style={fill=none},inner frame sep=1mm]
\draw(0,0)--(1,1)--(2,0)--(3,1)--(4,0)--(5,1)--(6,0)--(7,1)--(8,0);
\draw(0,0)--(-1,1)--(0,2)--(-1,3)--(0,4)--(-1,5)--(0,6)--(-1,7)--(0,8)--(7,1);
\draw(7,0)node{$1$}(5,0)node{$2$}(3,0)node{$4$}(1,0)node{$3$};
\draw(3,1)--(1,3)--(0,2);
\draw(0,6)--(5,1)--(6,2)(0,4)--(1,3)--(3,5)(0,6)--(1,7)(0,4)--(2,6)(2,2)--(4,4)(3,1)--(5,3);
\draw(1,1.8)node{$2$};
} \\ \hline 
\end{tabular}
\\[12pt]
\begin{tabular}{c|cccc}\hline
$S(231)$ & $3124$ & $4312$ & $4132$ & $1324$  \\ \hline
Dyck path  
&  
\tikzpic{-0.5}{[scale=0.3, show background rectangle,background rectangle/.style={fill=none},inner frame sep=1mm]
\draw(0,0)--(3,3)--(5,1)--(6,2)--(8,0);
\draw[dashed](1,1)--(2,0)--(4,2)(2,2)--(4,0)--(5,1)--(6,0)--(7,1);
}
&\tikzpic{-0.5}{[scale=0.3, show background rectangle,background rectangle/.style={fill=none},inner frame sep=1mm]
\draw(0,0)--(2,2)--(3,1)--(4,2)--(5,1)--(6,2)--(8,0);
\draw[dashed](1,1)--(2,0)--(3,1)--(4,0)--(5,1)--(6,0)--(7,1);
}
& \tikzpic{-0.5}{[scale=0.3, show background rectangle,background rectangle/.style={fill=none},inner frame sep=1mm]
\draw(0,0)--(1,1)--(2,0)--(4,2)--(5,1)--(6,2)--(8,0);
\draw[dashed](3,1)--(4,0)--(5,1)--(6,0)--(7,1);
}
& 
\tikzpic{-0.5}{[scale=0.3, show background rectangle,background rectangle/.style={fill=none},inner frame sep=1mm]
\draw(0,0)--(2,2)--(4,0)--(6,2)--(8,0);
\draw[dashed](1,1)--(2,0)--(3,1)(5,1)--(6,0)--(7,1);
}
 \\ \hline
Dyck tiling 
&\tikzpic{-0.5}{[scale=0.3, show background rectangle,background rectangle/.style={fill=none},inner frame sep=1mm]
\draw(0,0)--(1,1)--(2,0)--(3,1)--(4,0)--(5,1)--(6,0)--(7,1)--(8,0);
\draw(0,0)--(-1,1)--(0,2)--(-1,3)--(0,4)--(-1,5)--(0,6)--(-1,7)--(0,8)--(7,1);
\draw(7,0)node{$3$}(5,0)node{$1$}(3,0)node{$2$}(1,0)node{$4$};
\draw(1,1)--(0,2)(5,3)--(4,2)--(3,3)--(2,2)--(1,1);
\draw(0,6)--(3,3)--(4,4)(0,4)--(2,2)(0,6)--(1,7)(0,4)--(2,6)(0,2)--(3,5);
\draw(4,1.3)node{$5$};
} 
&
\tikzpic{-0.5}{[scale=0.3, show background rectangle,background rectangle/.style={fill=none},inner frame sep=1mm]
\draw(0,0)--(1,1)--(2,0)--(3,1)--(4,0)--(5,1)--(6,0)--(7,1)--(8,0);
\draw(0,0)--(-1,1)--(0,2)--(-1,3)--(0,4)--(-1,5)--(0,6)--(-1,7)--(0,8)--(7,1);
\draw(7,0)node{$4$}(5,0)node{$3$}(3,0)node{$1$}(1,0)node{$2$};
\draw(5,3)--(4,2)--(3,3)--(2,2)--(1,3)--(0,2)(3,5)--(2,4)--(1,5)--(0,4);
\draw(0,6)--(1,7)(0,6)--(1,5)--(2,6);
\draw(3,1.8)node{$4$}(2,3.3)node{$4$};
} 
&
\tikzpic{-0.5}{[scale=0.3, show background rectangle,background rectangle/.style={fill=none},inner frame sep=1mm]
\draw(0,0)--(1,1)--(2,0)--(3,1)--(4,0)--(5,1)--(6,0)--(7,1)--(8,0);
\draw(0,0)--(-1,1)--(0,2)--(-1,3)--(0,4)--(-1,5)--(0,6)--(-1,7)--(0,8)--(7,1);
\draw(7,0)node{$4$}(5,0)node{$1$}(3,0)node{$3$}(1,0)node{$2$};
\draw(5,3)--(4,2)--(3,3)--(2,2)--(1,3)--(0,2)(3,3)--(1,5)--(0,4);
\draw(0,6)--(1,7)(0,6)--(1,5)--(2,6)(2,4)--(3,5)(3,3)--(4,4);
\draw(3,1.8)node{$4$}(1,3.5)node{$3$};	
} 
&
\tikzpic{-0.5}{[scale=0.3, show background rectangle,background rectangle/.style={fill=none},inner frame sep=1mm]
\draw(0,0)--(1,1)--(2,0)--(3,1)--(4,0)--(5,1)--(6,0)--(7,1)--(8,0);
\draw(0,0)--(-1,1)--(0,2)--(-1,3)--(0,4)--(-1,5)--(0,6)--(-1,7)--(0,8)--(7,1);
\draw(7,0)node{$1$}(5,0)node{$3$}(3,0)node{$2$}(1,0)node{$4$};
\draw(1,1)--(0,2)(5,1)--(3,3)--(1,1)(2,2)--(0,4)(0,6)--(1,7)(0,6)--(3,3)--(4,4);
\draw(0,4)--(2,6)(0,2)--(3,5)(5,1)--(6,2)(4,2)--(5,3);
\draw(3,1.8)node{$4$};
} 
 \\ \hline
\end{tabular}
\\[12pt]
\begin{tabular}{c|cccc}\hline
$S(231)$ & $1432$ & $2134$ & $4213$ & $2143$ \\ \hline
Dyck path 
&  
\tikzpic{-0.5}{[scale=0.3, show background rectangle,background rectangle/.style={fill=none},inner frame sep=1mm]
\draw(0,0)--(1,1)--(2,0)--(3,1)--(4,0)--(6,2)--(8,0);
\draw[dashed](5,1)--(6,0)--(7,1);
}
&
\tikzpic{-0.5}{[scale=0.3, show background rectangle,background rectangle/.style={fill=none},inner frame sep=1mm]
\draw(0,0)--(3,3)--(6,0)--(7,1)--(8,0);
\draw[dashed](1,1)--(2,0)--(4,2)(2,2)--(4,0)--(5,1);
}
&
\tikzpic{-0.5}{[scale=0.3, show background rectangle,background rectangle/.style={fill=none},inner frame sep=1mm]
\draw(0,0)--(2,2)--(3,1)--(4,2)--(6,0)--(7,1)--(8,0);
\draw[dashed](1,1)--(2,0)--(3,1)--(4,0)--(5,1);
}
&
\tikzpic{-0.5}{[scale=0.3, show background rectangle,background rectangle/.style={fill=none},inner frame sep=1mm]
\draw(0,0)--(1,1)--(2,0)--(4,2)--(6,0)--(7,1)--(8,0);
\draw[dashed](3,1)--(4,0)--(5,1);
} \\ \hline
Dyck tilig 
&\tikzpic{-0.5}{[scale=0.3, show background rectangle,background rectangle/.style={fill=none},inner frame sep=1mm]
\draw(0,0)--(1,1)--(2,0)--(3,1)--(4,0)--(5,1)--(6,0)--(7,1)--(8,0);
\draw(0,0)--(-1,1)--(0,2)--(-1,3)--(0,4)--(-1,5)--(0,6)--(-1,7)--(0,8)--(7,1);
\draw(7,0)node{$1$}(5,0)node{$4$}(3,0)node{$3$}(1,0)node{$2$};
\draw(5,1)--(3,3)--(2,2)--(1,3)--(0,2)(3,3)--(1,5)--(0,4);
\draw(0,6)--(1,7)(0,6)--(1,5)--(2,6)(2,4)--(3,5)(3,3)--(4,4)(4,2)--(5,3)(5,1)--(6,2);
\draw(2,1.3)node{$3$}(1,3.8)node{$3$};
} 
&
\tikzpic{-0.5}{[scale=0.3, show background rectangle,background rectangle/.style={fill=none},inner frame sep=1mm]
\draw(0,0)--(1,1)--(2,0)--(3,1)--(4,0)--(5,1)--(6,0)--(7,1)--(8,0);
\draw(0,0)--(-1,1)--(0,2)--(-1,3)--(0,4)--(-1,5)--(0,6)--(-1,7)--(0,8)--(7,1);
\draw(7,0)node{$2$}(5,0)node{$1$}(3,0)node{$3$}(1,0)node{$4$};
\draw(1,1)--(0,2)(3,1)--(0,4)(5,3)--(3,1)(0,6)--(1,7)(0,6)--(4,2)(0,4)--(2,6)(0,2)--(3,5)(1,1)--(4,4);
\draw(5,1.8)node{$6$};
} 
&
\tikzpic{-0.5}{[scale=0.3, show background rectangle,background rectangle/.style={fill=none},inner frame sep=1mm]
\draw(0,0)--(1,1)--(2,0)--(3,1)--(4,0)--(5,1)--(6,0)--(7,1)--(8,0);
\draw(0,0)--(-1,1)--(0,2)--(-1,3)--(0,4)--(-1,5)--(0,6)--(-1,7)--(0,8)--(7,1);
\draw(7,0)node{$4$}(5,0)node{$2$}(3,0)node{$1$}(1,0)node{$3$};
\draw(5,3)--(4,2)--(3,3)--(2,2)--(1,3)--(0,2)(1,3)--(0,4)(3,5)--(1,3)(0,6)--(1,7)(0,6)--(2,4)(0,4)--(2,6);
\draw(3,1.8)node{$4$}(3,3.8)node{$5$};
} 
&
\tikzpic{-0.5}{[scale=0.3, show background rectangle,background rectangle/.style={fill=none},inner frame sep=1mm]
\draw(0,0)--(1,1)--(2,0)--(3,1)--(4,0)--(5,1)--(6,0)--(7,1)--(8,0);
\draw(0,0)--(-1,1)--(0,2)--(-1,3)--(0,4)--(-1,5)--(0,6)--(-1,7)--(0,8)--(7,1);
\draw(7,0)node{$2$}(5,0)node{$1$}(3,0)node{$4$}(1,0)node{$3$};
\draw(3,1)--(1,3)--(0,2)(1,3)--(0,4)(5,3)--(3,1)(0,6)--(1,7)(0,6)--(4,2)(0,4)--(2,6)(1,3)--(3,5)(2,2)--(4,4);
\draw(5,1.8)node{$6$}(1,1.8)node{$2$};
} 
 \\ \hline
\end{tabular}
\\[12pt]
\begin{tabular}{c|cc}\hline
$S(231)$ & $3214$ & $4321$  \\ \hline
Dyck path 
&
\tikzpic{-0.5}{[scale=0.3, show background rectangle,background rectangle/.style={fill=none},inner frame sep=1mm]
\draw(0,0)--(2,2)--(4,0)--(5,1)--(6,0)--(7,1)--(8,0);
\draw[dashed](1,1)--(2,0)--(3,1);
} 
&
\tikzpic{-0.5}{[scale=0.3, show background rectangle,background rectangle/.style={fill=none},inner frame sep=1mm]
\draw(0,0)--(1,1)--(2,0)--(3,1)--(4,0)--(5,1)--(6,0)--(7,1)--(8,0);
} \\ \hline
Dyck tiling 
&\tikzpic{-0.5}{[scale=0.3, show background rectangle,background rectangle/.style={fill=none},inner frame sep=1mm]
\draw(0,0)--(1,1)--(2,0)--(3,1)--(4,0)--(5,1)--(6,0)--(7,1)--(8,0);
\draw(0,0)--(-1,1)--(0,2)--(-1,3)--(0,4)--(-1,5)--(0,6)--(-1,7)--(0,8)--(7,1);
\draw(7,0)node{$3$}(5,0)node{$2$}(3,0)node{$1$}(1,0)node{$4$};
\draw(1,1)--(0,2)(5,3)--(4,2)--(3,3)--(1,1)(2,2)--(0,4)(3,5)--(0,2)(0,6)--(1,7)(0,6)--(2,4)(0,4)--(2,6);
\draw(4,1.3)node{$5$}(3,3.8)node{$5$};
} 
&
\tikzpic{-0.5}{[scale=0.3, show background rectangle,background rectangle/.style={fill=none},inner frame sep=1mm]
\draw(0,0)--(1,1)--(2,0)--(3,1)--(4,0)--(5,1)--(6,0)--(7,1)--(8,0);
\draw(0,0)--(-1,1)--(0,2)--(-1,3)--(0,4)--(-1,5)--(0,6)--(-1,7)--(0,8)--(7,1);
\draw(7,0)node{$4$}(5,0)node{$3$}(3,0)node{$2$}(1,0)node{$1$};
\draw(5,3)--(4,2)--(3,3)--(2,2)--(1,3)--(0,2)(3,5)--(2,4)--(1,5)--(0,4)(1,7)--(0,6);
\draw(3,1.8)node{$4$}(2,3.3)node{$4$}(1,5.8)node{$4$};
} 
\\ \hline
\end{tabular}
\caption{The bijection among $231$-avoiding permutations, Dyck paths and Dyck tilings}
\label{fig:bij}
\end{figure}

\begin{example}
In Figure \ref{fig:bij}, we summarize the correspondence among $231$-avoiding permutations, Dyck paths and 
corresponding Dyck tilings for $n=4$.
The integer in a non-trivial Dyck tile is the major index of this tile.
\end{example}

\subsection{Dyck tilings and \texorpdfstring{$q$}{q}-Catalan number \texorpdfstring{$C_{n}(q)$}{Cn(q)}}
Let $w\in S(231)$ and $D(w)$ be the corresponding Dyck tiling.
We define the weight $\mathrm{wt}'(d)$ of a Dyck tile $d$ of size $k$, $0\le k$, as 
\begin{align*}
\mathrm{wt}'(d):=k.
\end{align*}
Similarly, the weight $\mathrm{wt}'(D)$ of a Dyck tiling $D$ is given by 
\begin{align*}
\mathrm{wt}'(D):=\sum_{d\in D}\mathrm{wt}'(d).
\end{align*}

\begin{prop}
\label{prop:DTqCat}
We have 
\begin{align*}
\sum_{w\in S(231)}q^{\mathrm{wt}'(D(w))}=q^{\genfrac{(}{)}{0pt}{}{n}{2}}C_{n}(q^{-1}),
\end{align*}
where $C_{n}(q)$ is defined in Eq. (\ref{eq:defCn}).
\end{prop}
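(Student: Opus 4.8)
The plan is to show that the tiling weight $\mathrm{wt}'(D(w))$ is nothing but the inversion number $\mathrm{inv}(w)$ of the underlying $231$-avoiding permutation, and then to read off the evaluation from Proposition \ref{prop:231Crn} at $r=1$.

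First I would extract the weight from the Hermite history, reusing the analysis in the proof of the lemma that establishes Eq. (\ref{eq:majdDyck}). Because the bottom path $\lambda_b$ is a zig-zag, each trajectory --- the one issuing from the anchor box labelled $i$ --- meets at most one non-trivial Dyck tile $d_i$, and the endpoint of that trajectory is controlled by $h(i)=n+1-\mathrm{pos}(i)+\#\{j>i:j\leftarrow i\}$. The tile $d_i$ is trivial precisely when $h(i)=n+1-i$, and otherwise the Dyck path characterizing $d_i$ has semilength $h(i)-(n+1-i)$; since the tile size entering $\mathrm{wt}'$ is exactly this semilength (so that a unit square has size $0$), we get $\mathrm{wt}'(d_i)=h(i)-(n+1-i)$ in all cases. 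Splitting the values below $i$ into those lying left and right of $i$ rewrites this as
\begin{align*}
\mathrm{wt}'(d_i)=h(i)-(n+1-i)=\#\{j<i:i\to j\text{ in }w\},
\end{align*}
the number of entries smaller than $i$ that sit to the right of $i$.

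Summing over $i$ then collapses the weight to the inversion number, because every inversion of $w$ is a larger value $i$ placed to the left of a smaller value $j$ and is therefore counted exactly once:
\begin{align*}
\mathrm{wt}'(D(w))=\sum_{i=1}^{n}\#\{j<i:i\to j\}=\mathrm{inv}(w).
\end{align*}
With this identity the proposition follows at once by specializing Proposition \ref{prop:231Crn} to $r=1$, where $\mathrm{INV}=\mathrm{inv}$ and $C_n^{(1)}(q)=C_n(q)$, giving $\sum_{w\in\mathfrak{S}_n(231)}q^{\mathrm{inv}(w)}=q^{\binom{n}{2}}C_n(q^{-1})$. I expect the only delicate point to be the size bookkeeping in the middle step: one must use the semilength of $d_i$ rather than the quantity $l(d)=(N(d)+1)/2$ that appears in the major-index formula (\ref{eq:majd}), the two differing by $1$ on every non-trivial tile; carrying the wrong normalization would add the number of non-trivial tiles to $\mathrm{inv}(w)$ and spoil the count. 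As an independent check of the final evaluation one can avoid Proposition \ref{prop:231Crn} entirely: reversal $w\mapsto w_n\cdots w_1$ is a bijection $\mathfrak{S}_n(231)\to\mathfrak{S}_n(132)$ with $\mathrm{inv}\mapsto\binom{n}{2}-\mathrm{inv}$, so Eq. (\ref{eq:132C}) yields $\sum_{w\in\mathfrak{S}_n(231)}q^{\mathrm{inv}(w)}=q^{\binom{n}{2}}C_n(q^{-1})$ as well.
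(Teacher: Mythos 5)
Your proof is correct and follows essentially the same route as the paper's: identify $\mathrm{wt}'(D(w))$ with $\mathrm{inv}(w)$ and then invoke Proposition \ref{prop:231Crn} at $r=1$. The only difference is that you carry out in detail the trajectory computation (including the correct normalization of tile size versus $l(d)$) that the paper's proof merely asserts in one sentence.
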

\begin{proof}
We consider Dyck tilings in the region $R$. We have a non-trivial Dyck tile $d$ if 
there is an inversion and the size of $d$ is equal to the inversion number.
From these, we have 
\begin{align*}
\sum_{w\in S(231)}q^{\mathrm{wt}'(D(w))}=\sum_{w\in S(231)}q^{\mathrm{Inv}(w)}.
\end{align*}
The right hand side of the above equation gives $q^{\genfrac{(}{)}{0pt}{}{n}{2}}C_{n}(q^{-1})$
by Proposition \ref{prop:231Crn} with $r=1$.
This completes the proof.
\end{proof}

\section{Symmetric Dyck paths and \texorpdfstring{$q$}{q}-Catalan number}
\label{sec:SymDP}
\subsection{Symmetric Dyck paths}
In this section, we interpret the $q$-Catalan numbers in terms of symmetric Dyck paths.
For this, we introduce the bijection $\Psi$ between a Dyck path of size $n$ and a symmetric 
Dyck path of size $n-1$ following \cite[Section 7.3]{S23}.

Recall that a Dyck path $\mu:=\mu_1\mu_2\ldots\mu_{n}$ of size $n$ is expressed as a word of 
two alphabets $\{0,1\}$.
We define the operation $\rho:\mathcal{P}^{1}_{r}\rightarrow\mathcal{P}^{1}_{r}$, 
$\mu\mapsto\overline{\mu_{n}}\overline{\mu_{n-1}}\ldots\overline{\mu_1}$ where 
$\overline{0}=1$ and $\overline{1}=0$.

\begin{defn}
A Dyck path of length $n$ is said to be symmetric if it satisfies 
$\mu=\rho(\mu)$.
\end{defn}

It is obvious that the length of a symmetric Dyck path is always even.
Since a symmetric Dyck path $\mu:=\mu_{1}\ldots \mu_{2n}$ is invariant 
under the action of $\rho$, we identify $\mu$ with a word $\nu:=\mu_1\ldots\mu_{n}$ of length $n$.
Note that we can recover $\mu$ from $\nu$ uniquely.

We introduce a notion of arches and half-arches for a symmetric Dyck path $\mu=\mu_1\ldots\mu_{2n}$.
Suppose that $\mu_i=0$ for $1\le i\le n$. Then, let $j>i$ be the minimal integer such that 
$\mu_{j}=1$ and the sub-word $\mu_i\mu_{i+1}\ldots\mu_{j}$ is a prime Dyck path.
The pair $(i,j)$ is called an arch if $j\le n$.
Similarly, if the pair $(i,j)$ or equivalently the integer $i$, is called a half-arch 
if $j\ge n+1$. 

\begin{defn}
The size of a symmetric Dyck path is the sum of the numbers of arches and half-arches.
We denote by $\mathcal{Q}_{n}$ the set of symmetric Dyck paths of size $n$. 
\end{defn}

For example, we have five symmetric Dyck paths of size two:
\begin{align*}
00 \quad 001 \quad 010  \quad 0011 \quad 0101 
\end{align*}
Note that the size of a symmetric Dyck path does not coincide with the length of the word in $\{0,1\}^{\ast}$.

In what follows, we introduce the bijection $\Psi$ between a Dyck path in $\mathcal{P}^{1}_{n}$ and 
a symmetric Dyck path in $\mathcal{Q}_{n-1}$ following \cite{S23}.
We first introduce a map $\Psi:\mathcal{P}^{1}_{n}\rightarrow\mathcal{Q}_{n-1}$, 
$\mu\mapsto\nu$ by the following algorithm:
\begin{enumerate}
\item Delete the first $0$ and the last $1$ in $\mu$.
\item If the first alphabet is $1$ and it is followed by $0$, we replace this pair of $1$ and $0$ 
with a single alphabet $0$.
\item If the first alphabet is $0$, take a maximal prime Dyck path starting from this $0$.
Then, we leave the alphabets in this maximal prime Dyck path as they are.
\item We apply (2) and (3) to the remaining word until we replace all the alphabets with 
new alphabets.
\end{enumerate}

We define the map $\Psi^{-1}:\mathcal{Q}_{n}\rightarrow\mathcal{P}^1_{n+1}$, $\nu\mapsto\mu$ by 
the following algorithm \cite[Section 7.3]{S23}:
\begin{enumerate}
\item Replace a half-arch in $\nu$ which consists of single alphabet $0$ by $10$.
\item Keep an arch which consists of a pair of $0$ and $1$ as it is.
\item After replacing all arches with new alphabets, add $0$ from left and $1$ from right.
\end{enumerate}

The maps $\Psi$ and $\Psi^{-1}$ are obviously inverse to each other, therefore the map 
$\Psi$ is a bijection $\mathcal{P}^{1}_{n+1}\xrightarrow{\sim}\mathcal{Q}_{n}$.

\subsection{Major index for symmetric Dyck paths}
The descent set $\mathrm{Des}(\nu)$ of $\nu$ is the set of integers $1\le i\le n-1$ such that $(\nu_{i},\nu_{i+1})=(1,0)$ and 
the integer $n$ if $\nu_{n}=1$.
Note that the integer $n$ is not included in the descent set if $\nu_{n}=0$. 
We define the major index on a symmetric Dyck path $\nu\in\mathcal{Q}_{n}$ as follows:
\begin{align*}
\mathrm{maj}(\nu):=\sum_{i\in\mathrm{Des}(\nu)}i.
\end{align*}

\begin{theorem}
\label{thrm:majsymD}
The major index on $\mathcal{Q}_{n}$ satisfies 
\begin{align*}
\sum_{\nu\in\mathcal{Q}_{n}}q^{\mathrm{maj}(\nu)}
=\genfrac{}{}{}{}{1}{[n+2]}\genfrac{[}{]}{0pt}{}{2(n+1)}{n+1}.
\end{align*}
\end{theorem}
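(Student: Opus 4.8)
The plan is to prove the equidistribution by a descent-refined recursion, rather than by transporting $\mathrm{maj}$ across the bijection $\Psi$ (which is a bijection $\mathcal{P}^1_{n+1}\xrightarrow{\sim}\mathcal{Q}_n$ but rearranges $\mathrm{maj}$, so it is not directly useful for this statistic). First I would note that the right-hand side equals $\sum_{r=0}^{n}N(n+1,r)$ by the stated summation formula for the $q$-Narayana numbers (with $n$ replaced by $n+1$). It therefore suffices to establish the refined identity
\begin{align*}
\widetilde{N}(n,r):=\sum_{\substack{\nu\in\mathcal{Q}_n\\ |\mathrm{Des}(\nu)|=r}}q^{\mathrm{maj}(\nu)}=N(n+1,r),\qquad 0\le r\le n,
\end{align*}
and then to sum over $r$.

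I would prove this refined identity by induction on $n$, showing that $\widetilde{N}(n,r)$ obeys the $q$-Narayana recurrence (\ref{eq:recN}) with $n$ replaced by $n+1$. The induction is driven by the leading arch/half-arch decomposition of a symmetric Dyck path word $\nu$ (necessarily $\nu_1=0$). In the first case the leading $0$ is a half-arch, i.e. the word stays above height $0$ after the first step; deleting it returns a word $\nu'\in\mathcal{Q}_{n-1}$ with the same number $r$ of descents but all descent positions shifted down by one, so $\mathrm{maj}(\nu)=\mathrm{maj}(\nu')+r$. Summing over this case gives $q^{r}\widetilde{N}(n-1,r)=q^{r}N(n,r)$ by induction, matching the first summand of (\ref{eq:recN}). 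In the second case $\nu=P\cdot\nu''$, where $P$ is the first prime Dyck block (the prefix ending at the first return to height $0$, of some size $p\ge1$ and length $2p$) and $\nu''\in\mathcal{Q}_{n-p}$; the interior of $P$ is an arbitrary Dyck path $I\in\mathcal{P}^1_{p-1}$, which contributes $N(p-1,s)$ when $I$ has $s$ valleys.

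The delicate point is the exponent bookkeeping in the second case. I would split the descents of $\nu=P\cdot\nu''$ into three groups: the $s$ internal valleys of $I$, which sit at positions shifted by $+1$ and contribute $q^{\mathrm{maj}(I)+s}$; the forced descent at the junction position $2p$ (the closing $1$ of $P$ followed by the first $0$ of $\nu''$, or, when $\nu''$ is empty, the endpoint descent furnished by the rule that $n$ is added when $\nu_n=1$), contributing $q^{2p}$; and the descents of $\nu''$, shifted by the length $2p$ of $P$, contributing $q^{\mathrm{maj}(\nu'')+2p(r-s-1)}$ since $\nu''$ carries $r-s-1$ descents. Writing $j=p-1$, the accumulated exponent is $s+2(j+1)(r-s)$, exactly the power of $q$ in (\ref{eq:recN}), while the descent counts $s$ and $r-s-1$ match the indices of $N(j,s)$ and $N(n-j,r-s-1)=\widetilde{N}(n-1-j,r-s-1)$. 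Thus both summands of (\ref{eq:recN}) with $n\mapsto n+1$ are reproduced, and induction (with the trivial base $n=0$) yields $\widetilde{N}(n,r)=N(n+1,r)$; summing over $r$ completes the proof.

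The main obstacle I anticipate is making the leading arch/half-arch decomposition precise as a bijection—checking that deleting a leading half-arch and splitting off a leading prime block really return valid symmetric Dyck path words of sizes $n-1$ and $n-p$ together with an arbitrary Dyck path $I\in\mathcal{P}^1_{p-1}$—and handling the endpoint convention in $\mathrm{Des}(\nu)$ uniformly. For the latter the clean device is to observe that $\mathrm{maj}(\nu)$ equals the ordinary major index of the word $\nu0$ obtained by appending a single $0$, which converts the special endpoint rule into a genuine descent and makes all of the position-shift computations above uniform. Once these two points are settled, the matching of exponents is the routine calculation sketched above.
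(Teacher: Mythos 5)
Your proposal is correct and follows essentially the same route as the paper: the paper likewise introduces the descent-refined generating function $N_{3}(n,r)$ over symmetric Dyck paths, derives the recurrence $N_{3}(n,r)=q^{r}N_{3}(n-1,r)+\sum_{j,s}N(j-1,s)N_{3}(n-j,r-s-1)q^{2j(r-s)+s}$ from exactly the leading half-arch versus leading prime-block decomposition with the same exponent bookkeeping, matches it against the $q$-Narayana recurrence (\ref{eq:recN}) to get $N_{3}(n,r)=N(n+1,r)$, and sums over $r$. Your appended-zero device for the endpoint descent convention is a small presentational refinement but does not change the argument.
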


\begin{example}
We have fourteen Dyck paths in $\mathcal{P}^{1}_{4}$, and fourteen 
symmetric Dyck paths in $\mathcal{Q}_{3}$.
\begin{table}[ht]
\begin{tabular}{c|ccccc}\hline 
Dyck path $\sigma$ & $00001111$ & $0001.0111$ & $00011.011$ & $000111.01$ & $001.00111$  \\ \hline 
$\mathrm{maj}(\sigma)$ & $0$ & $4$ & $5$ & $6$ & $3$  \\ 
$\Psi(\sigma)$ & $000111$ & $001.011$ & $0011.01$ & $0011.0$ & $01.0011$ \\ 
$\mathrm{maj}(\Psi(\sigma))$ & $6$ & $9$ & $10$ & $4$ & $8$ \\ \hline 
\end{tabular}
\\[12pt]
\begin{tabular}{c|ccccc}\hline 
$\sigma$ & $001.01.011$ & $001.011.01$ & $0011.0011$ & $0011.01.01$ & $01.000111$   \\ \hline 
$\mathrm{maj}(\sigma)$  & $8$ & $9$ & $4$  & $10$ & $2$ \\ 
$\Psi(\sigma)$& $01.01.01$ & $01.01.0$ & $01.001$  & $01.00$ & $00011$ \\ 
$\mathrm{maj}(\Psi(\sigma))$ & $12$ & $6$ & $7$ & $2$ & $5$ \\ \hline 
\end{tabular}
\\[12pt]
\begin{tabular}{c|cccc}\hline 
$\sigma$ &  $01.001.011$ & $01.0011.01$  & $01.01.0011$ & $01.01.01.01$\\ \hline 
$\mathrm{maj}(\sigma)$  & $7$ & $8$ & $6$ & $12$ \\ 
$\Psi(\sigma)$  & $001.01$ & $001.0$ & $0001$ & $000$  \\ 	
$\mathrm{maj}(\Psi(\sigma))$  & $8$ & $3$ & $4$ & $0$ \\ \hline 
\end{tabular}
\\[12pt]
\caption{The bijection between Dyck paths and symmetric Dyck paths}
\label{table:bijDsymD}
\end{table}
In Table \ref{table:bijDsymD}, we summarize the Dyck paths, their 
major indices, corresponding symmetric Dyck paths, and their major 
indices.
\end{example}

Before proceeding to the proof of Theorem \ref{thrm:majsymD}, we introduce 
two lemmas.

Let $\mathcal{Q}_{n}(r)$ be the set of symmetric Dyck paths of size $n$ which 
have $r$ descents.
We define 
\begin{align*}
N_{3}(n,r):=\sum_{\nu\in\mathcal{Q}_{n}(r)}q^{\mathrm{maj}(\nu)}.
\end{align*}
The generating function $N_{3}(n,r)$ satisfies the following recurrence relation.
\begin{lemma}
We have 
\begin{align}
\label{eq:recN3}
N_{3}(n,r)=q^{r}N_{3}(n-1,r)+\sum_{j=1}^{n}\sum_{s=0}^{j-1}N(j-1,s)N_{3}(n-j,r-s-1)q^{2j(r-s)+s},
\end{align}
where $N(n,r)$ is the $q$-Narayana number defined in Eq. (\ref{def:qNara}).
\end{lemma}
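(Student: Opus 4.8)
The plan is to decompose a symmetric Dyck path according to the first-return structure of its associated word and to match the two resulting cases with the two terms on the right-hand side of Eq.~(\ref{eq:recN3}). Throughout, fix a symmetric Dyck path of size $n$ with half-word $\nu$ of length $L$, so that the full path is $\mu=\nu\rho(\nu)$ of length $2L$, and recall that $\nu_1=\mu_1=0$. The key preliminary observation is a clean dichotomy: the first up step $\mu_1$ is a half-arch if and only if $\mu$ is prime. Indeed, if the first return to height $0$ occurred at a position $j_0$ with $1<j_0<2L$, the symmetry of $\mu$ would force a return to $0$ at $2L-j_0<j_0$, contradicting minimality; hence $\mu$ is prime exactly when the match of $\mu_1$ is the final step. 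This splits $\mathcal{Q}_n(r)$ into prime paths (first term) and non-prime paths (second term).

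For the prime case I would peel off the outer layer. If $\mu$ is prime then $\mu=0\,\mu'\,1$ with $\mu'$ again symmetric, and the half-word becomes $\nu=0\nu'$, where $\nu'$ is the half-word of the symmetric path $\mu'$ of size $n-1$. Prepending a $0$ shifts every position by one and creates no descent at the front, so $\mathrm{Des}(0\nu')=\{i+1:i\in\mathrm{Des}(\nu')\}$; in particular the number of descents is unchanged and $\mathrm{maj}(\nu)=\mathrm{maj}(\nu')+r$. Since $\mu\mapsto\mu'$ is a bijection from prime symmetric paths of size $n$ with $r$ descents onto all symmetric paths of size $n-1$ with $r$ descents, summing $q^{\mathrm{maj}(\nu)}$ over the prime case yields exactly $q^{r}N_3(n-1,r)$.

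For the non-prime case I would use the symmetry to extract the first prime block together with its mirror image. Writing $P=\mu_1\cdots\mu_{2j}$ for the first prime block (of length $2j\le L$), the invariance $\rho(\mu)=\mu$ gives $\mu=P\,M\,\rho(P)$ where $M$ is a (possibly empty) symmetric Dyck path of size $n-j$; correspondingly $\nu=P\tau$ with $\tau$ the half-word of $M$. I would then compute $\mathrm{maj}(\nu)$ by splitting $\mathrm{Des}(\nu)$ into the descents internal to $P$, the descent at the junction position $2j$ (a genuine $(1,0)$ descent when $\tau\neq\emptyset$ and produced by the last-letter convention when $\tau=\emptyset$), and the descents coming from $\tau$ shifted by $2j$; this last block matches $\mathrm{Des}(\tau)$ under the convention because the shifted terminal index $2j+|\tau|=L$ reproduces the last-position rule. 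Writing $P=0P'1$ and letting $s$ be the number of valleys of the Dyck path $P'$ of length $j-1$, a short check gives $|\mathrm{Des}_{\mathrm{int}}(P)|=s$ and $\mathrm{maj}_{\mathrm{int}}(P)=\mathrm{maj}(P')+s$. Combining these with $r_M:=|\mathrm{Des}(\tau)|=r-s-1$ gives
\[
\mathrm{maj}(\nu)=\mathrm{maj}(P')+\mathrm{maj}(M)+2j(r-s)+s,
\]
and summing $q^{\mathrm{maj}(\nu)}$ over $P'$ (weighted by $N(j-1,s)$) and over $M$ (weighted by $N_3(n-j,r-s-1)$) with $1\le j\le n$ and $0\le s\le j-1$ reproduces the second term of Eq.~(\ref{eq:recN3}).

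The main obstacle I anticipate is the major-index bookkeeping in the non-prime case rather than the structural decomposition. In particular one must verify that the junction descent and the last-letter convention interact correctly, so that $\mathrm{Des}(\tau)$ appears intact after the shift by $2j$; that the reduction $P=0P'1$ produces the clean identities $|\mathrm{Des}_{\mathrm{int}}(P)|=s$ and $\mathrm{maj}_{\mathrm{int}}(P)=\mathrm{maj}(P')+s$ (taking care that the step at position $2j-1$ is never a descent, and checking the degenerate block $P=01$ separately); and that the empty-$M$ boundary case is consistent. Once these local computations are settled, the two cases assemble immediately into Eq.~(\ref{eq:recN3}).
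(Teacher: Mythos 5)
Your proposal is correct and follows essentially the same route as the paper: the dichotomy you phrase as prime versus non-prime is exactly the paper's case split on whether the first letter $\nu_1$ forms a half-arch or closes an arch at position $2j$, and the subsequent bookkeeping (peeling $0\cdot 1$ off the prime block to get $N(j-1,s)q^{s}$, the shift by $2j$ giving $q^{2j(r-s-1)}$, and the junction descent giving $q^{2j}$) matches the paper's computation term for term. Your explicit verification that the last-letter descent convention is consistent with the shift and with the empty-tail case is a point the paper leaves implicit, but it does not change the argument.
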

\begin{proof}
Since $\nu:=\nu_1\ldots\nu_{m}\in\mathcal{Q}_{n}(r)$, the first letter $\nu_1$ is $0$.
We consider two cases: 1) $\nu_1$ forms a half-arch, and 2) $\nu_{1}$ forms
an arch with some $2j$ such that $\nu_{2j}=1$.

In the first case, the word $\nu':=\nu_2\ldots\nu_m$ is in $\mathcal{Q}_{n-1}(r)$. Further,
since each descent in $\nu'$ is shifted to right by one unit in $\nu$, we have a factor 
$q^{r}$. This implies that the words $\nu'$ contribute to $N_3(n,r)$ as $q^{r}N_{3}(n-1,r)$.

In the second case, the sub-path $\nu_1\ldots\nu_{2j}$ is a prime Dyck path of size $j$.
Since to be prime means that $\nu_{2j}=1$ and $\nu_2=0$, the sub-path $\nu_2\ldots\nu_{2j-1}$ is 
a Dyck path of size $j-1$. This gives the factor $N(j-1,s)$ and $q^{s}$ if $\nu_2\ldots\nu_{2j-1}$
has $s$ descents.
The sub-path $\nu_{2j+1}\ldots\nu_{m}$ is in $\mathcal{Q}_{n-j}(r-s-1)$, and shifted rightward
by $2j$ unit. This gives a generating function $N_3(n-j,r-s-1)$ and a factor $q^{2j(r-s-1)}$. 
Finally, $\nu_{2j}$ is a descent in $\nu$ since $\nu_{2j}=1$ and $\nu_{2j+1}=0$. This gives a factor $q^{2j}$.
The contribution of $\nu$ to $N_{3}(n,r)$ is given by the product of all contributions, which 
is $N(j-1,s)N_3(n-j,r-s-1)q^{2j(r-s)+s}$.
By taking the sum over $j$ and $s$, we obtain the expression (\ref{eq:recN3}), which completes 
the proof.
\end{proof}

\begin{lemma}
\label{lemma:N3}
We have $N_3(n,r)=N(n+1,r)$.
\end{lemma}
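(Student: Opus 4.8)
The plan is to prove the identity by induction on $n$, showing that the recurrence (\ref{eq:recN3}) for $N_3(n,r)$ collapses onto the recurrence (\ref{eq:recN}) for $N(n+1,r)$ once the inductive hypothesis is inserted. For the base case I would take $n=0$: the unique symmetric Dyck path of size zero is the empty path, which has no descents, so $N_3(0,0)=1$ and $N_3(0,r)=0$ for $r\ge1$; on the other hand, the definition (\ref{def:qNara}) gives $N(1,0)=1$ and $N(1,r)=0$ for $r\ge1$, since $\genfrac{[}{]}{0pt}{}{1}{r+1}=0$ whenever $r\ge1$. Thus $N_3(0,r)=N(1,r)$ for all $r$, and (if preferred) one checks $n=1$ directly against $N(2,r)$ as a sanity test.

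For the inductive step, assume $N_3(m,r')=N(m+1,r')$ for all $m<n$ and all $r'$. I would first rewrite (\ref{eq:recN}) with $n$ replaced by $n+1$, obtaining
\[
N(n+1,r)=q^{r}N(n,r)+\sum_{j=0}^{n-1}\sum_{s=0}^{j-1}N(j,s)N(n-j,r-s-1)q^{2(j+1)(r-s)+s}.
\]
Then I apply the hypothesis to every $N_3$ appearing in (\ref{eq:recN3}). The leading term becomes $q^{r}N_3(n-1,r)=q^{r}N(n,r)$, matching the leading term above. In the double sum of (\ref{eq:recN3}) I replace $N_3(n-j,r-s-1)$ by $N(n-j+1,r-s-1)$ and substitute $j=j'+1$, so that the sum turns into
\[
\sum_{j'=0}^{n-1}\sum_{s=0}^{j'}N(j',s)N(n-j',r-s-1)q^{2(j'+1)(r-s)+s}.
\]

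The crux of the argument — and the step I expect to require the most care — is reconciling the two inner summation ranges: the reindexed $N_3$ sum runs over $0\le s\le j'$, whereas the $N(n+1,r)$ sum runs only over $0\le s\le j'-1$. The extra boundary term, at $s=j'$, carries the factor $N(j',j')$. But the defining formula (\ref{def:qNara}) contains the $q$-binomial $\genfrac{[}{]}{0pt}{}{j'}{j'+1}$, which vanishes because $j'+1>j'$; hence $N(j',j')=0$ and the $s=j'$ term drops out. After discarding it, the reindexed double sum coincides termwise with the double sum of $N(n+1,r)$, so $N_3(n,r)=N(n+1,r)$, completing the induction. The same vanishing identity also absorbs the degenerate cases where $r-s-1$ or a $q$-binomial index leaves its valid range, so no separate edge analysis is needed.
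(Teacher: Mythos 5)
Your overall plan --- induct on $n$ and match the recurrence (\ref{eq:recN3}) against (\ref{eq:recN}) --- is exactly the comparison that the paper's one-line proof gestures at, and your base case and the reindexing $j=j'+1$ are both correct. The gap is in the step you yourself flag as the crux. For $j'\ge 1$ the extra boundary term $s=j'$ does vanish, since $\genfrac{[}{]}{0pt}{}{j'}{j'+1}=0$ forces $N(j',j')=0$. But at $j'=0$ the extra term is $N(0,0)\,N(n,r-1)\,q^{2r}$, and $N(0,0)$ is \emph{not} zero: the formula (\ref{def:qNara}) is indeterminate there (the factor $1/[n]$ becomes $1/[0]=1/0$ against the vanishing $q$-binomial), so the vanishing argument does not apply, and the only convention under which (\ref{eq:recN3}) itself is valid is $N(0,0)=1$ (the empty Dyck path, with no valleys). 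Concretely, the $j=1$, $s=0$ term of (\ref{eq:recN3}) --- which becomes your $j'=0$, $s=0$ term after reindexing --- is already needed to get $N_3(2,1)=q^2+q^3+q^4$: discarding it loses the $q^2$. So this term cannot be dropped.

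What this uncovers is that the comparison does not close against (\ref{eq:recN}) as literally printed: there the inner sum $\sum_{s=0}^{j-1}$ is empty at $j=0$, so (\ref{eq:recN}) is missing precisely the contribution $N(0,0)N(n-1,r-1)q^{2r}$ of Dyck paths whose leftmost prime component is $01$. (Check $n=2$, $r=1$: the right-hand side of (\ref{eq:recN}) gives $qN(1,1)=0$, while $N(2,1)=q^2$.) To repair your induction you should first correct the summation range in (\ref{eq:recN}) so that the $j=0$, $s=0$ term, with the convention $N(0,0)=1$, is included; once that is done, your reindexed sum and the corrected sum agree termwise --- the genuinely extraneous terms are only those with $s=j'$ and $j'\ge 1$, which vanish exactly as you say --- and the induction closes. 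As written, however, the final sentence claiming that ``the same vanishing identity also absorbs the degenerate cases\ldots so no separate edge analysis is needed'' is precisely where the proof fails: the $j'=0$ edge case requires keeping the term, not killing it.
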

\begin{proof}
We compare the expression (\ref{eq:recN}) with Eq. (\ref{eq:recN3}).
Then, it is obvious that we have $N_{3}(n,r)=N(n+1,r)$.
\end{proof}

\begin{proof}[Proof of Theorem \ref{thrm:majsymD}]
We have 
\begin{align*}
\sum_{\nu\in\mathcal{Q}_{n}}q^{\mathrm{maj}(\nu)}
&=\sum_{r=0}^{n}N_{3}(n,r)=\sum_{r=0}^{n}N(n+1,r), \\
&=\genfrac{}{}{}{}{1}{[n+2]}\genfrac{[}{]}{0pt}{}{2(n+1)}{n+1},
\end{align*}
where we have used Lemma \ref{lemma:N3} and the property of the $q$-Narayana numbers.
\end{proof}

We refine the $q$-Narayana numbers for symmetric Dyck paths as follows.
Let $\mathcal{Q}(n,r,k)$ be the set of symmetric Dyck paths which are 
size $n$, have $r$ descents and $k$ half-arches.
Then, we define 
\begin{align*}
N_{3}(n,r,k):=\sum_{\nu\in\mathcal{Q}(n,r,k)}q^{\mathrm{maj}(\nu)}.
\end{align*}

\begin{theorem}
\label{thrm:N3nrk}
We have 
\begin{align}
\label{eq:exN3}
N_{3}(n,r,k)=\genfrac{[}{]}{0pt}{}{n}{k+1}^{-1}\genfrac{[}{]}{0pt}{}{n}{r}
\genfrac{[}{]}{0pt}{}{n}{r-1}\genfrac{[}{]}{0pt}{}{n-r}{k}q^{r(r+1)+2(n-r-k)}.
\end{align}
\end{theorem}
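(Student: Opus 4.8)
The plan is to imitate the method behind Lemma~\ref{lemma:N3}: first produce a recurrence for $N_3(n,r,k)$ by refining the decomposition that yields Eq.~(\ref{eq:recN3}), now also tracking the number $k$ of half-arches, and then verify that the closed form (\ref{eq:exN3}) satisfies it by induction on $n$. Writing $\nu=\nu_1\ldots\nu_m\in\mathcal{Q}_n(r)$ with $\nu_1=0$, the same two cases arise. If $\nu_1$ is a half-arch, deleting it (together with its mirror image) yields a symmetric Dyck path in $\mathcal{Q}_{n-1}$ with $r$ descents and exactly $k-1$ half-arches, and every descent shifts left by one, contributing $q^{r}N_3(n-1,r,k-1)$. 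If instead $\nu_1$ opens an arch closing at $\nu_{2j}=1$, then $\nu_2\ldots\nu_{2j-1}$ is an ordinary Dyck path of size $j-1$ carrying only arches (no half-arches), while $\nu_{2j+1}\ldots\nu_m$ is a symmetric path of size $n-j$ retaining all $k$ half-arches, the position bookkeeping being identical to that in Eq.~(\ref{eq:recN3}). This gives
\begin{align*}
N_3(n,r,k)=q^{r}N_3(n-1,r,k-1)
+\sum_{j=1}^{n}\sum_{s=0}^{j-1}N(j-1,s)\,N_3(n-j,r-s-1,k)\,q^{2j(r-s)+s},
\end{align*}
which one verifies directly against the values for small $n$.

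Next I would set $F(n,r,k)$ equal to the right-hand side of (\ref{eq:exN3}) and prove $N_3=F$ by induction on $n$, substituting the closed form for $N(j-1,s)$ from Eq.~(\ref{def:qNara}) and the inductive hypothesis $N_3(n-j,r-s-1,k)=F(n-j,r-s-1,k)$ into the recurrence. Care is needed at the edges of the parameter range, where individual $q$-binomials in $F$ vanish or blow up: for $k=n-r$ the identity $\genfrac{[}{]}{0pt}{}{n}{n-r+1}=\genfrac{[}{]}{0pt}{}{n}{r-1}$ collapses $F(n,r,n-r)$ to the honest polynomial $\genfrac{[}{]}{0pt}{}{n}{r}q^{r(r+1)}$, which matches the maximal half-arch configurations (in particular $F(n,0,n)=1$, the path $0^n$, and $F(0,0,0)=1$ for the base case). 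I would treat these as removable singularities of the rational expression $F$ and check the extremal cases there by hand.

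The heart of the argument, and the step I expect to be the main obstacle, is the $q$-binomial summation identity obtained after extracting the common power of $q$: the double sum over $j$ and $s$ must be shown to collapse onto the single closed form $F(n,r,k)$. This is a $q$-hypergeometric summation of $q$-Chu--Vandermonde / $q$-Saalsch\"utz type, made delicate by the factor $\genfrac{[}{]}{0pt}{}{n-j}{k+1}^{-1}$, a $q$-binomial in the \emph{denominator}, which is precisely the feature producing the ratio $\genfrac{[}{]}{0pt}{}{n}{k+1}^{-1}$ in (\ref{eq:exN3}). A useful auxiliary check and partial guide is the consistency relation $\sum_{k}N_3(n,r,k)=N(n+1,r)$ forced by Lemma~\ref{lemma:N3}: it reduces to a one-variable instance of the same kind of summation and can be verified first to pin down the normalization.

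A more transparent route that I would keep in reserve is the bijection sending a symmetric Dyck path of size $n$ with $k$ half-arches to a sequence $(A_0,\ldots,A_k)$ of $k+1$ ordinary Dyck paths with $\sum_i\mathrm{size}(A_i)=n-k$, where the $k$ half-arches are the unmatched $0$'s separating the blocks. This explains combinatorially the factors $\genfrac{[}{]}{0pt}{}{n-r}{k}$ and $q^{2(n-r-k)}$ in the formula. However, computing $\mathrm{maj}$ through this decomposition requires tracking the block offsets $p_i=2(a_0+\cdots+a_{i-1})+i$, so the coupling between factors reintroduces a convolution of essentially the same difficulty; I would therefore use it to interpret the formula rather than to bypass the summation.
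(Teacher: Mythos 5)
Your derivation of the convolution recurrence
\begin{align*}
N_3(n,r,k)=q^{r}N_3(n-1,r,k-1)
+\sum_{j=1}^{n}\sum_{s=0}^{j-1}N(j-1,s)\,N_3(n-j,r-s-1,k)\,q^{2j(r-s)+s}
\end{align*}
is correct and coincides with the paper's Eq.~(\ref{eq:recN3inN}). But from there your proof stops at exactly the point where the real work begins: you yourself identify the double $q$-hypergeometric summation needed to check that the closed form (\ref{eq:exN3}) satisfies this convolution as ``the main obstacle,'' and you neither carry it out nor reduce it to a cited identity. The presence of $\genfrac{[}{]}{0pt}{}{n-j}{k+1}^{-1}$ inside the sum makes this a genuinely nonstandard summation (it is not a clean $q$-Chu--Vandermonde or $q$-Saalsch\"utz instance), so asserting that it ``must collapse'' onto $F(n,r,k)$ is a gap, not a proof. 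The reserve bijective route is likewise only sketched and, as you concede, reintroduces the same convolution.

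The paper sidesteps this obstacle entirely, and this is the idea missing from your proposal. It introduces a further refinement $N_{3,\pm}(n,r,k)$ according to whether the last letter of the half-word is $0$ or $1$, and peels off the path from the \emph{right} end rather than decomposing at the first arch. This yields the linear (non-convolution) relations $N_{3,+}(n,r,k)=N_{3}(n-1,r,k-1)$ and $N_{3,-}(n,r,k)=q^{2n-k}N_{3}(n-1,r-1,k)+q\,N_{3,-}(n,r,k+1)$, which combine into
\begin{align*}
N_{3}(n,r,k)-q\,N_{3}(n,r,k+1)-N_{3}(n-1,r,k-1)+q\,N_{3}(n-1,r,k)-q^{2n-k}N_{3}(n-1,r-1,k)=0,
\end{align*}
a five-term recurrence with no internal sum. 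Verifying that (\ref{eq:exN3}) satisfies this is a routine manipulation of $q$-binomials, which is why the paper can conclude immediately. To repair your argument you would either need to supply the $q$-binomial convolution identity in full, or adopt some analogue of this second, letter-by-letter recurrence. (Your observations about the boundary cases $k=n-r$ and the consistency check $\sum_k N_3(n,r,k)=N(n,r)$ are sound, and the latter is indeed how the paper uses the convolution recurrence --- but only in the corollary, not in the proof of the theorem itself.)
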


We prove Theorem \ref{thrm:N3nrk} after introducing two lemmas which give
recurrence relations for the generating function $N_{3}(n,r,k)$.

\begin{lemma}
The generating function $N_{3}(n,r,k)$ has the following recurrence relation
\begin{align}
\label{eq:recN3inN}
N_{3}(n,r,k)=q^{r}N_{3}(n-1,r,k-1)
+\sum_{j=1}^{n}\sum_{s=0}^{j-1}N(j-1,s)N_{3}(n-j,r-s-1,k)q^{2j(r-s)+s}.
\end{align}
\end{lemma}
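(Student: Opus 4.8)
The plan is to refine the proof of the recurrence (\ref{eq:recN3}) by carrying along the number $k$ of half-arches at each stage of the decomposition; the $q$-weights will be identical to the unrefined case, so that only the bookkeeping of half-arches needs to be added. As there, I would fix $\nu=\nu_1\ldots\nu_m\in\mathcal{Q}(n,r,k)$. Since $\nu_1=0$ necessarily, the leading $0$ either forms a half-arch or opens an arch that closes within $\nu$, and these two cases will produce the two terms on the right-hand side of (\ref{eq:recN3inN}).

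First I would treat the half-arch case. Deleting the leading $0$ yields $\nu'=\nu_2\ldots\nu_m$, which is again a symmetric Dyck path; exactly the half-arch based at position $1$ disappears and no other arch or half-arch is disturbed, so the size drops by one, the number of half-arches drops from $k$ to $k-1$, and the descent set is shifted left by one position. Thus $\nu'\in\mathcal{Q}(n-1,r,k-1)$ and $\mathrm{maj}(\nu)=\mathrm{maj}(\nu')+r$, which yields the contribution $q^{r}N_3(n-1,r,k-1)$, the refinement of the first term of (\ref{eq:recN3}).

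Next I would treat the arch case, in which $\nu_1$ opens an arch closing at position $2j$, so that $\nu_1\ldots\nu_{2j}$ is a prime Dyck path of size $j$ and its interior $\nu_2\ldots\nu_{2j-1}$ is an ordinary Dyck path of size $j-1$. The key observation for the refinement is that every $0$ inside this prime block is matched by a $1$ inside the same block, so the block consists solely of arches and contains no half-arch; hence all $k$ half-arches of $\nu$ are inherited by the tail $\nu_{2j+1}\ldots\nu_m$, which is a symmetric Dyck path in $\mathcal{Q}(n-j,r-s-1,k)$ whenever the interior has $s$ descents. Since the interior is a genuine Dyck path, its major-index generating function is the $q$-Narayana number $N(j-1,s)$, with no half-arch dependence, and the three $q$-powers are recorded exactly as in the proof of (\ref{eq:recN3}): $q^{s}$ from shifting the interior one step to the right, $q^{2j(r-s-1)}$ from shifting the tail $2j$ steps to the right, and $q^{2j}$ from the descent created at position $2j$. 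Their product is $q^{2j(r-s)+s}$, matching the exponent in (\ref{eq:recN3inN}).

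Summing the arch case over $1\le j\le n$ and $0\le s\le j-1$ and adding the half-arch term then gives (\ref{eq:recN3inN}). The only genuinely new point compared with (\ref{eq:recN3}) is the half-arch accounting: one must verify that the prime block opened by $\nu_1$ is built entirely of arches, so that the tail carries the full count $k$, and that removing a leading half-arch lowers the count by exactly one. The main obstacle is thus purely combinatorial rather than computational, and once it is settled the $q$-exponents are inherited verbatim from the unrefined recurrence.
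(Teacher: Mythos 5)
Your proposal is correct and follows essentially the same route as the paper: the same case split on whether the leading $0$ is a half-arch or opens a prime arch block, with identical $q$-weight bookkeeping. Your explicit remark that the prime block consists entirely of arches (so the tail inherits all $k$ half-arches) is the one point the paper leaves implicit, and it is a worthwhile clarification rather than a deviation.
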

\begin{proof}
Since $\nu:=\nu_1\ldots\nu_{m}\in\mathcal{Q}(n,r,k)$, the first letter in $\nu$ is $0$.
We have two cases for the first $0$ in $\nu$: 1) this letter forms 
a half-arch, and 2) this letter forms an arch with some $1$ in $\nu$.

We consider the first case. The word $\nu'=\nu_2\ldots \nu_{m}$ is in $\mathcal{Q}(n-1,r,k-1)$ 
since the first letter $\nu_2$ of $\nu'$ is $0$. The existence of the first $0$ in $\nu$ 
gives the factor $q^{r}$ and such $\nu'$ contributes to $N_3(n,r,k)$ as $q^{r}N_{3}(n-1,r,k-1)$.

For the second case, let $2j\in[1,m]$ be the integer such that $\nu_{2j}=1$ and this $1$ forms 
an arch with $\nu_{1}=0$. Then, it is obvious that a sub-word $\nu_1\ldots \nu_{2j}$ is 
a prime Dyck path of size $j$.
If we divide the word $\nu$ into two sub-words $\nu'=\nu_1\ldots\nu_{2j}$ and $\nu''=\nu_{2j+1}\ldots \nu_{m}$,
$\nu$ is expressed as a concatenation of $\nu'$ and $\nu''$.
The property that $\nu'$ is prime insures that this expression is unique.
Since $\nu'$ is prime, $\nu'$ contribute to $N_{3}(n,r,k)$ as $q^{s}N(j-1,s)$ where $s$ is the number of valleys
in $\nu'$. The word $\nu''$ is in $\mathcal{Q}(n-j,r-s-1,k)$. Since we have $2j$ letters left to $\nu''$ in $\nu$,
we have a factor $q^{2j(r-s-1)}$. Further, since the last letter of $\nu'$ is $1$ and the first letter of $\nu''$=0,
the integer $2j$ is a descent in $\nu$. This gives the factor $q^{2j}$.
The concatenation of two words implies that the contribution of $\nu$ to $N_3(n,r,k)$ is the product of 
all these contributions. This gives the term $N(j-1,s)N_{3}(n-j,r-s-1,k)q^{2j(r-s)+s}$.
If we sum over all possible $j$ and $s$, we obtain the expression (\ref{eq:recN3inN}), which 
completes the proof.
\end{proof}

The last alphabet of a symmetric Dyck path $\nu=\nu_1\ldots\nu_{n}$ in 
$\mathcal{Q}(n,r,k)$ is either $0$ or $1$.
We denote by $\mathcal{Q}_{+}(n,r,k)$ (resp. $\mathcal{Q}_{-}(n,r,k)$)
the set of symmetric Dyck path of size $n$ with the last alphabet $0$ (resp. $1$).
Then, it is obvious that we have 
$\mathcal{Q}(n,r,k)=\mathcal{Q}_{+}(n,r,k)\cup\mathcal{Q}_{-}(n,r,k)$. 
We further refine the $N_{3}(n,r,k)$ as follows.
We define 
\begin{align*}
N_{3,\epsilon}(n,r,k):=\sum_{\nu\in\mathcal{Q}_{\epsilon}(n,r,k)}q^{\mathrm{maj}(\nu)},
\end{align*}
where $\epsilon=\pm$.

\begin{lemma}
\label{lemma:recNpm}
The generating functions $N_{3,\epsilon}(n,r,k)$, $\epsilon=\pm$, satisfy the 
following recurrence relation:
\begin{align}
\label{eq:N3sum}
N_{3}(n,r,k)&=N_{3,+}(n,r,k)+N_{3,-}(n,r,k), \\
\label{eq:N3p}
N_{3,+}(n,r,k)&=N_{3}(n-1,r,k-1), \\
\label{eq:N3m}
N_{3,-}(n,r,k)&=q^{2n-k}\cdot N_{3}(n-1,r-1,k)+q\cdot  N_{3,-}(n,r,k+1).
\end{align}
\end{lemma}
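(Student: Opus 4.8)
The plan is to verify the three identities in order, working throughout with the word $\nu=\nu_1\ldots\nu_L$ attached to a symmetric Dyck path, regarded as a lattice-path prefix of nonnegative height. The key dictionary is that the number $k$ of half-arches equals the number of unmatched $0$'s, i.e. the terminal height of $\nu$, so that $k=2n-L$ and hence $L=2n-k$; moreover the ``last alphabet'' distinguishing $\mathcal{Q}_+$ from $\mathcal{Q}_-$ is simply the last letter $\nu_L$. With this in place, (\ref{eq:N3sum}) is immediate: every $\nu\in\mathcal{Q}(n,r,k)$ ends in $0$ or in $1$, so $\mathcal{Q}(n,r,k)=\mathcal{Q}_+(n,r,k)\sqcup\mathcal{Q}_-(n,r,k)$ and the generating functions add.

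For (\ref{eq:N3p}) I would use the map that deletes the final letter of a path ending in $0$. If $\nu\in\mathcal{Q}_+(n,r,k)$, its last letter is a $0$ at the midpoint, necessarily an unmatched $0$ (a half-arch); deleting it yields a prefix $\nu'$ with one fewer $0$ (size $n-1$) and one fewer half-arch ($k-1$). The point to check is that neither the descent set nor the major index changes: by the descent convention the final position is not a descent when $\nu_L=0$, while a possible descent at $L-1$ survives with the same value (it is recorded by the pair rule in $\nu$ and by the terminal-letter rule in $\nu'$). Appending a $0$ is the inverse, so this is a weight-preserving bijection $\mathcal{Q}_+(n,r,k)\xrightarrow{\sim}\mathcal{Q}(n-1,r,k-1)$, which is (\ref{eq:N3p}).

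The identity (\ref{eq:N3m}) is the substantive one, and I would prove it by deleting the final $1$ of a path $\nu\in\mathcal{Q}_-(n,r,k)$. Since the last step is a down-step, the resulting prefix $\nu'$ has the same size $n$ but terminal height $k+1$ (one more half-arch), and I split according to the new last letter $\nu_{L-1}$. If $\nu$ ends in $11$, then $\nu'$ ends in $1$, the descent at position $L$ is replaced by a descent at $L-1$, the descent count stays $r$, and $\mathrm{maj}(\nu)=\mathrm{maj}(\nu')+1$; this contributes $q\,N_{3,-}(n,r,k+1)$. If $\nu$ ends in $01$, then $\nu'$ ends in $0$, the descent at $L=2n-k$ is lost (so the count drops to $r-1$) and $\mathrm{maj}(\nu)=\mathrm{maj}(\nu')+(2n-k)$; this contributes $q^{2n-k}N_{3,+}(n,r-1,k+1)$. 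Finally I rewrite the second contribution by the already-established (\ref{eq:N3p}), namely $N_{3,+}(n,r-1,k+1)=N_3(n-1,r-1,k)$, to obtain exactly (\ref{eq:N3m}).

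The main obstacle is bookkeeping the major index correctly across the two deletion maps, especially under the nonstandard descent convention in which a trailing $1$ counts as a descent at the last position while a trailing $0$ does not. The delicate step is recognizing that deleting a final $1$ behaves in two genuinely different ways: in the $11$ case the terminal descent merely slides one place to the left (weight shift $q^{1}$) with the descent count preserved, whereas in the $01$ case it is destroyed and its full positional value $2n-k$ is released (weight shift $q^{2n-k}$) with the descent count dropping by one. Getting these two exponents and the descent-count changes right, and then folding the $01$ case back into $N_3$ through (\ref{eq:N3p}), is where all the care is needed; everything else is routine.
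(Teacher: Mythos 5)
Your proposal is correct and follows essentially the same route as the paper: split by the last letter for (\ref{eq:N3sum}), delete a trailing half-arch $0$ for (\ref{eq:N3p}), and delete the final $1$ of a path in $\mathcal{Q}_-$ with the two sub-cases $\nu$ ending in $01$ (descent at position $2n-k$ destroyed, weight $q^{2n-k}$) or in $11$ (descent slides left by one, weight $q$). The only cosmetic difference is that in the $01$ case the paper removes the whole arch $01$ at once to land in $\mathcal{Q}(n-1,r-1,k)$, whereas you pause at $\mathcal{Q}_+(n,r-1,k+1)$ and then invoke (\ref{eq:N3p}); the composite map and the bookkeeping are identical.
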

\begin{proof}
The definition of $Q_{\epsilon}(n,r,k)$ implies Eq. (\ref{eq:N3sum}).
If $\nu=\nu_1\ldots\nu_{m}\in\mathcal{Q}_{+}(n,r,k)$, the last alphabet of $\nu$
is $0$. This means that this alphabet is a half-arch. The word $\nu'=\nu_1\ldots\nu_{m-1}$
belongs to $\mathcal{Q}(n-1,r,k-1)$, which implies Eq. (\ref{eq:N3p}).

When $\nu\in\mathcal{Q}_{-}(n,r,k)$, we have two cases: $\nu_{m-1}=0$ and $\nu_{m-1}=1$.
In the former case, we have an arch which consists of $\nu_{m-1}=0$ and $\nu_{m}=1$.
Then, the word $\nu=1\ldots\nu_{m-2}$ belongs to $\mathcal{Q}(n-1,r-1,k)$.
Since $\nu_{m}=1$ implies that $m$ is in the descent set. This $\nu_{m}=1$ contributes 
to the generating function by $q^{2n-k}$.
Similarly, when $\nu_{m-1}=1$, the word $\nu''=\nu_1\ldots\nu_{m-1}$ belong to 
$\mathcal{Q}_{-}(n,r,k+1)$.  Note that the integer $m-1$ is a descent for $\nu''$, and 
$m$ is a descent for $\nu$ but $m-1$ is not a descent for $\nu$.
This gives the factor $q=q^{m-(m-1)}$. As a consequence, we have Eq. (\ref{eq:N3m}).
\end{proof}

\begin{proof}[Proof of Theorem \ref{thrm:N3nrk}]
From Lemma \ref{lemma:recNpm}, the generating function $N_{3}(n,r,k)$ satisfies
\begin{align*}
\begin{aligned}
&N_{3}(n,r,k)-q\cdot N_{3}(n,r,k+1)-N_{3}(n-1,r,k-1) \\
&\qquad+q\cdot N_{3}(n-1,r,k)-q^{2n-k}\cdot N_{3}(n-1,r-1,k)=0.
\end{aligned}
\end{align*}
It is clear that the expression (\ref{eq:exN3}) satisfies the 
recurrence relation.
\end{proof}

\begin{cor}
We have 
\begin{align}
\label{eq:N3sum2}
\sum_{k=0}^{n}N_{3}(n,r,k)=N(n,r).
\end{align}
\end{cor}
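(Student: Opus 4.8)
The plan is to avoid any fresh computation and to read the left-hand side as a \emph{refinement} of a quantity that has already been identified. By definition $N_{3}(n,r,k)$ is the major-index generating function over those symmetric Dyck paths of size $n$ that have exactly $r$ descents \emph{and} exactly $k$ half-arches, whereas $N_{3}(n,r)=\sum_{\nu\in\mathcal{Q}_{n}(r)}q^{\mathrm{maj}(\nu)}$ is the same sum with the number of half-arches left free. Every $\nu\in\mathcal{Q}_{n}(r)$ has a well-defined number of half-arches, and since the size $n$ is the total number of arches \emph{plus} half-arches, that number lies in $[0,n]$. Thus $\mathcal{Q}_{n}(r)=\bigsqcup_{k=0}^{n}\mathcal{Q}(n,r,k)$, so summing the $q^{\mathrm{maj}}$ weights over the fibres gives $\sum_{k=0}^{n}N_{3}(n,r,k)=N_{3}(n,r)$. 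First I would record this disjoint-union decomposition explicitly, and then invoke Lemma~\ref{lemma:N3}, which evaluates $N_{3}(n,r)$ as the corresponding $q$-Narayana number; this collapses the whole corollary into essentially one line.

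A self-contained alternative is to start from the closed form of Theorem~\ref{thrm:N3nrk} and to perform the $k$-sum directly. Here the factors $\genfrac{[}{]}{0pt}{}{n}{r}\genfrac{[}{]}{0pt}{}{n}{r-1}q^{r(r+1)+2(n-r)}$ are independent of $k$ and pull out of the sum, leaving $\sum_{k=0}^{n-r}\genfrac{[}{]}{0pt}{}{n}{k+1}^{-1}\genfrac{[}{]}{0pt}{}{n-r}{k}q^{-2k}$, and one would hope to recognise this residual sum as a $q$-Chu--Vandermonde evaluation and verify that the product telescopes to the closed form in Eq.~(\ref{def:qNara}). I would keep this route only as a numerical cross-check: the reciprocal $q$-binomial $\genfrac{[}{]}{0pt}{}{n}{k+1}^{-1}$ means the summand is not a $q$-hypergeometric term in the standard normalisation, so there is no one-line appeal to a classical summation formula.

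The main obstacle is therefore \emph{not} the algebra but the bookkeeping underlying the first route: one must be certain that the half-arch count is a genuine statistic on all of $\mathcal{Q}_{n}(r)$ and that its range is exactly $[0,n]$, so that the decomposition $\mathcal{Q}_{n}(r)=\bigsqcup_{k=0}^{n}\mathcal{Q}(n,r,k)$ neither omits nor double-counts a path. This is immediate from the notion of arch and half-arch introduced in Section~\ref{sec:SymDP}, since each $0$ of the half-word opens a unique arch or half-arch; once this is stated the proof is a triviality. If one prefers an inductive confirmation, one can instead sum the relations (\ref{eq:N3sum})--(\ref{eq:N3m}) of Lemma~\ref{lemma:recNpm} over $k$ and match the resulting recurrence for $\sum_{k}N_{3}(n,r,k)$ against the $q$-Narayana recurrence (\ref{eq:recN}), which gives the same conclusion without passing through the closed form.
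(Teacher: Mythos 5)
Your first route --- decompose $\mathcal{Q}_{n}(r)=\bigsqcup_{k=0}^{n}\mathcal{Q}(n,r,k)$, conclude $\sum_{k}N_{3}(n,r,k)=N_{3}(n,r)$, and then evaluate via Lemma \ref{lemma:N3} --- is sound up to its very last step, and that step is where the argument breaks: Lemma \ref{lemma:N3} evaluates $N_{3}(n,r)$ as $N(n+1,r)$, \emph{not} $N(n,r)$. So your "one-line" proof actually establishes $\sum_{k}N_{3}(n,r,k)=N(n+1,r)$, which is not the displayed identity (\ref{eq:N3sum2}) and in fact contradicts it. You cannot wave at "the corresponding $q$-Narayana number" without confronting this index shift. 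A direct check at $n=2$, $r=1$ settles which side is right: the symmetric Dyck paths of size $2$ with one descent are $0011$, $001$, $010$, with major indices $4$, $3$, $2$, so the left-hand side of (\ref{eq:N3sum2}) is $q^{2}+q^{3}+q^{4}=N(3,1)$, whereas $N(2,1)=q^{2}$. Thus the corollary as printed is off by one, and your disjoint-union argument is precisely the cleanest way to see this --- but only if you carry it to the end and report the clash rather than asserting the stated conclusion.

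For comparison, the paper's own proof is your third alternative: it sums the recurrence (\ref{eq:recN3inN}) over $k$ and claims the result "is obviously" the $q$-Narayana recurrence (\ref{eq:recN}). It is not: summing (\ref{eq:recN3inN}) over $k$ reproduces literally the recurrence (\ref{eq:recN3}) for the two-index $N_{3}(n,r)$ (the index ranges $\sum_{j=1}^{n}\sum_{s=0}^{j-1}$ do not match those of (\ref{eq:recN}) after reindexing), and the solution of (\ref{eq:recN3}) is $N(n+1,r)$ by Lemma \ref{lemma:N3}. So the paper's proof has the same off-by-one defect as its statement. Your second route (summing the closed form (\ref{eq:exN3}) over $k$) is rightly demoted to a numerical cross-check; if you run it at $n=2$, $r=1$ it returns $q^{4}+(q^{2}+q^{3})$, again confirming $N(n+1,r)$. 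The fix is to restate the corollary with right-hand side $N(n+1,r)$, at which point your first route \emph{is} a complete one-line proof.
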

\begin{proof}
Let $N'(n,r)$ be the left hand side of Eq. (\ref{eq:N3sum2}).
If we take the sum of Eq. (\ref{eq:recN3inN}) with respect to $k$,
we have 
\begin{align*}
N'(n,r)=q^{r}N'(n-1,r)+\sum_{j=1}^{n}\sum_{s=0}^{j-1}N(j-1,s)N'(n-j,r-s-1)q^{2j(r-s)+s}.
\end{align*}
By comparing the above equation with Eq. (\ref{eq:recN}), it is obvious that 
$N'(n,r)=N(n,r)$ which is the $q$-Narayana number.
\end{proof}

\subsection{Symmetric Dyck paths and \texorpdfstring{$q$}{q}-Catalan number \texorpdfstring{$C_{n}(q)$}{Cn(q)}}
Let $\nu=\nu_{1}\ldots\nu_{m}$ be a symmetric Dyck path in $\mathcal{Q}_{n}$.
Recall that $\nu$ consists of $n-k$ arches $a_{i}$, $1\le i\le n-k$, and $k$ 
half-arches $b_{j}$, $1\le j\le k$, with $0\le k\le n$. 
Then, we define $\mathrm{Inv}_{1}(\nu)$ by
\begin{align*}
\mathrm{Inv}_{1}(\nu):=k+\sum_{j=1}^{k}\#\{a_{i} : \text{$a_{i}$ is right to $b_{j}$ in $\nu$}\}
+\sum_{j=1}^{k}\#\{b_{i}: \text{$b_i$ is right to $b_{j}$ in $\nu$}\}.
\end{align*}
The inversion number $\mathrm{Inv}_2(\nu)$ is defined to be 
\begin{align*}
\mathrm{Inv}_2(\nu):=\#\{(i,j): (\nu_{i},\nu_{j})=(1,0) \text{ with } 1\le i<j\le m \}.
\end{align*}
We define the inversion number $\mathrm{Inv}(\nu)$ of $\nu$ by
\begin{align*}
\mathrm{Inv}(\nu):=\mathrm{Inv}_1(\nu)+\mathrm{Inv}_{2}(\nu).
\end{align*}

\begin{prop}
\label{prop:symDPqCat}
We have
\begin{align}
\sum_{\nu\in\mathcal{Q}_{n}}q^{\mathrm{Inv}(\nu)}=C_{n+1}(q).
\end{align}
\end{prop}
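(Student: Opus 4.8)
The plan is to introduce the generating function $A_n(q):=\sum_{\nu\in\mathcal{Q}_n}q^{\mathrm{Inv}(\nu)}$ and to prove $A_n=C_{n+1}$ by induction on $n$ (the base case being $A_0=1=C_1$), by showing that $A_n$ obeys exactly the Carlitz recurrence (\ref{eq:recCn}). To this end I would decompose each first-half word $\nu$ at its last return to the baseline, equivalently at its \emph{first} half-arch. Concretely, every $\nu\in\mathcal{Q}_n$ factors uniquely as $\nu=W\,0\,\nu^{*}$, where $W\in\mathcal{P}^1_a$ is the genuine Dyck path formed by the top-level arches preceding the first half-arch, the displayed $0$ is that half-arch, and $\nu^{*}\in\mathcal{Q}_{n-a-1}$ is the remaining word read one level higher; when $\nu$ has no half-arch it is itself a Dyck path $W\in\mathcal{P}^1_n$, treated as the boundary case $a=n$. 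Checking that this is a bijection onto $\bigsqcup_{a}\mathcal{P}^1_a\times\mathcal{Q}_{n-a-1}$ is routine once one notes that a single $0$ inserted just after a Dyck path and just before a raised copy of $\nu^{*}$ is precisely the leftmost unmatched up-step.

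The arithmetic heart is to track how $\mathrm{Inv}=\mathrm{Inv}_1+\mathrm{Inv}_2$ behaves under this split, and I would handle the two pieces separately. Writing $k$ for the number of half-arches, the defining formula rearranges to $\mathrm{Inv}_1(\nu)=\binom{k+1}{2}+N$, where $N$ counts the (half-arch, arch) pairs whose half-arch lies to the left. Since the new half-arch sits to the left of exactly the $n-a-k$ arches of $\nu^{*}$, while the binomial term rises by $k$, this gives the clean shift $\mathrm{Inv}_1(\nu)=\mathrm{Inv}_1(\nu^{*})+(n-a)$. For $\mathrm{Inv}_2$, which counts $(1,0)$ pairs in the word, prepending $W$ and the single $0$ contributes $\mathrm{Inv}_2(W)$ together with the cross term $a(n-a)$, coming from the $a$ ones of $W$ against the $n-a$ zeros lying to their right. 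Adding the two yields the single clean formula $\mathrm{Inv}(\nu)=\mathrm{Inv}_2(W)+\mathrm{Inv}(\nu^{*})+(a+1)(n-a)$.

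Summing over the decomposition then produces $A_n=\sum_{a=0}^{n}\big(\sum_{W\in\mathcal{P}^1_a}q^{\mathrm{Inv}_2(W)}\big)\,q^{(a+1)(n-a)}\,A_{n-a-1}$, with the convention $A_{-1}:=1$. The inner sum equals $C_a(q)$: this is because $\mathrm{Inv}_2(W)$ is exactly the area statistic of (\ref{eq:AreaCn}), which I would confirm either by identifying $(1,0)$-inversions with the unit boxes below $0^a1^a$, or directly by showing $\sum_W q^{\mathrm{Inv}_2(W)}$ satisfies (\ref{eq:recCn}) via the first-return decomposition $W=0A1B$, whose cross term is $(|A|+1)|B|$. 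Invoking the induction hypothesis $A_{n-a-1}=C_{n-a}$, the recurrence becomes $A_n=\sum_{a=0}^{n}C_aC_{n-a}\,q^{(a+1)(n-a)}$, which is precisely (\ref{eq:recCn}) for $C_{n+1}$, closing the induction.

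I expect the main obstacle to lie not in the bijection or the concluding algebra but in the statistic bookkeeping of the middle step: one must verify that the increment of the quadratic piece $\binom{k+1}{2}$, the several linear contributions from the $N$-term, and the product cross term $a(n-a)$ from $\mathrm{Inv}_2$ all recombine to the single exponent $(a+1)(n-a)$, with no spurious dependence on $k$ surviving. Getting this recombination exactly right—together with the careful case analysis distinguishing $k\ge 1$ from the half-arch-free case $a=n$—is where the proof has to be argued with precision.
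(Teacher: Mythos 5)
Your proof is correct, but it follows a genuinely different route from the paper. The paper's own argument is a direct, statistic-preserving transport through the bijection $\Psi^{-1}:\mathcal{Q}_{n}\to\mathcal{P}^{1}_{n+1}$ (each half-arch $0$ becomes $10$, and the word is wrapped in $0\cdots 1$): it observes that $\mathrm{Inv}_{1}(\nu)$ accounts for the word-inversions created by the inserted $1$'s and $\mathrm{Inv}_{2}(\nu)$ for those of the $0$'s, so that $\mathrm{Inv}(\nu)=\mathrm{Inv}(\Psi^{-1}(\nu))$ equals the area statistic, and then quotes Eq.~(\ref{eq:AreaCn}). You instead bypass $\Psi$ entirely and re-derive the Carlitz recurrence (\ref{eq:recCn}) on $\mathcal{Q}_{n}$ via the first-half-arch factorization $\nu=W\,0\,\nu^{*}$; I checked the bookkeeping and your increment $\mathrm{Inv}(\nu)=\mathrm{Inv}_{2}(W)+\mathrm{Inv}(\nu^{*})+(a+1)(n-a)$ is right (the $\binom{k+1}{2}$ term rises by $k$, the new half-arch sees $n-a-k$ arches to its right, and the cross term is $a(n-a)$; note also that half-arches can never sit inside an arch, which is what makes the prefix $W$ a genuine balanced Dyck word and the left/right comparisons unambiguous). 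What each approach buys: the paper's proof is shorter and yields the stronger, pointwise statement that $\mathrm{Inv}$ on $\mathcal{Q}_{n}$ is equidistributed with (indeed equal to) the area statistic under an explicit bijection, whereas yours localizes all verification into a single recurrence step, parallels the structure of the other recurrence proofs in Section \ref{sec:SymDP} (e.g.\ for $N_{3}$), and only needs the area interpretation of $C_{a}(q)$ for honest Dyck paths $W$ rather than a global analysis of $\Psi^{-1}$. Both ultimately rest on the same fact that word-inversions of a Dyck path realize (\ref{eq:AreaCn}).
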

\begin{proof}
Recall that $\Psi: \mathcal{P}^1_{n}\rightarrow\mathcal{Q}_{n-1}$ is a bijection.
To prove the statement, it is enough to show that $\mathrm{Inv}(\nu)=\mathrm{Inv}(\Psi^{-1}(\nu))$
for $\nu\in\mathcal{Q}_{n}$ and $\Psi^{-1}(\nu)\in\mathcal{P}^1_{n+1}$ since we have
\begin{align*}
\sum_{\mu\in\mathcal{P}^1_{n+1}}q^{\mathrm{Inv}(\mu)}=C_{n+1}(q),
\end{align*}
by Eq. (\ref{eq:AreaCn}).
The bijection $\Psi^{-1}$ maps a half-arch $0$ to a sub-sequence $10$. We consider the inversion number
relative to this sub-sequence $10$. The inversion number $\mathrm{Inv}_{1}(\nu)$ counts the inversion number
of $1$ in $10$, and $\mathrm{Inv}_{2}(\nu)$ counts the inversion number of $0$ in $10$.
These imply that $\mathrm{Inv}(\nu)=\mathrm{Inv}(\Psi^{-1}(\nu))$.
This completes the proof.
\end{proof}

\bibliographystyle{amsplainhyper} 
\bibliography{biblio}

\end{document}